\documentclass[11pt]{article}

\usepackage{authblk}
\usepackage{selectp}
\usepackage{natbib}
\usepackage{setspace}
\usepackage{subfloat, color, xcolor,colortbl, tcolorbox,multirow}
\usepackage{pifont}
\newcommand{\cmark}{\ding{51}}
\newcommand{\xmark}{\ding{54}}

\PassOptionsToPackage{numbers, compress}{natbib}

\usepackage[lmargin=1in]{geometry}

\newcommand{\cg}{\cellcolor[gray]{0.7}}

\makeatother
\makeatletter
\newcommand{\pushright}[1]{\ifmeasuring@#1\else\omit\hfill$\displaystyle#1$\fi\ignorespaces}
\makeatother

\usepackage{adjustbox}

\usepackage{rotating} 

\usepackage{mathtools}
\usepackage{algpseudocode}
\usepackage{algorithm}
\usepackage[utf8]{inputenc} 
\usepackage[T1]{fontenc}    
\usepackage[hidelinks]{hyperref}       
\usepackage{url}            
\usepackage{booktabs}       
\usepackage{amsfonts}       
\usepackage{nicefrac}       
\usepackage{microtype}      
\usepackage{xcolor}         
\usepackage{enumerate}

\usepackage{graphicx}
\usepackage{amsmath, amsthm, amssymb}
\usepackage{bm}
\usepackage{mathabx}

\newtheorem{defi}{Definition}
\newtheorem{thm}{Theorem}
\newtheorem{prop}{Proposition}
\newtheorem{lem}{Lemma}
\newtheorem{obs}{Observation}
\newtheorem{coro}{Corollary}

\newcommand{\bsigma}{\bm{\sigma}}
\newcommand{\btheta}{\bm{\theta}}
\newcommand{\mx}{\mathrm{x}}
\newcommand{\my}{\mathrm{y}}
\newcommand{\mz}{\mathrm{z}}
\newcommand{\bbeta}{\bm{\beta}}

\newcommand{\bxi}{\bm{\xi}}
\newcommand{\bx}{\bm{x}}

\newcommand{\bq}{\bm{q}}
\newcommand{\bp}{\bm{p}}
\newcommand{\bz}{\bm{z}}
\newcommand{\bw}{\bm{w}}
\newcommand{\bv}{\bm{v}}
\newcommand{\bt}{\bm{t}}

\newcommand{\bs}{\bm{s}}

\newcommand{\mX}{\mathbb{X}}
\newcommand{\mY}{\mathbb{Y}}
\newcommand{\mZ}{\mathbb{Z}}
\newcommand{\mR}{\mathbb{R}}
\newcommand{\mB}{\mathbb{B}}
\newcommand{\mC}{\mathbb{Z}}
\newcommand{\mP}{\mathbb{P}}
\newcommand{\mQ}{\mathbb{Q}}
\newcommand{\mE}{\mathbb{E}}

\usepackage{dsfont}

\newcommand{\indicate}[1]{\mathds{1}\small{[#1]}}

\newcounter{defcounter}
\setcounter{defcounter}{0}
\newenvironment{myequation}{%
\addtocounter{equation}{-1}
\refstepcounter{defcounter}

\begin{equation}}
{\end{equation}}

\pagenumbering{arabic}

\doublespacing

\title{\vspace{-2cm} It's All in the Mix: \\ Wasserstein {\color{black} Classification and Regression} with Mixed Features}

\author{Reza Belbasi}
\author{Aras Selvi}
\author{Wolfram Wiesemann}

\affil{\small \textit{Imperial College Business School, London, United Kingdom} \\ \texttt{\{r.belbasi21, a.selvi19, ww\}@imperial.ac.uk}}

\date{}

\begin{document}

\maketitle

\begin{abstract}
    {\color{black}
    \noindent \textbf{Problem definition:} A key challenge in supervised learning is data scarcity, which can cause prediction models to overfit to the training data and perform poorly out of sample. A contemporary approach to combat overfitting is offered by distributionally robust problem formulations that consider all data-generating distributions close to the empirical distribution derived from historical samples, where `closeness' is determined by the Wasserstein distance. While such formulations show significant promise in prediction tasks where all input features are continuous, they scale exponentially when discrete features are present. \\
    \noindent \textbf{Methodology/results:} We demonstrate that distributionally robust mixed-feature classification and regression problems can indeed be solved in polynomial time. Our proof relies on classical ellipsoid method-based solution schemes that do not scale well in practice. To overcome this limitation, we develop a practically efficient (yet, in the worst case, exponential time) cutting plane-based algorithm that admits a polynomial time separation oracle, despite the presence of exponentially many constraints. We compare our method against alternative techniques both theoretically and empirically on standard benchmark instances. \\}
    \noindent \textbf{Managerial implications:} Data-driven operations management problems often involve prediction models with discrete features. We develop and analyze {\color{black} distributionally robust prediction models} that faithfully account for the presence of discrete features, and we demonstrate that our {\color{black} models} can significantly outperform existing methods that are agnostic to the presence of discrete features, both theoretically and on standard benchmark instances.
\end{abstract}

\section{Introduction}\label{sec-intro}

{\color{black} The recent application of machine learning tools across all areas of operations management has led to a plethora of data-driven and end-to-end approaches that blend predictive models from machine learning with optimization frameworks from operations research and operations management.} Notable examples include inventory management \citep{BR19:big_data_newsvendor,BK20:perscriptive}, logistics \citep{BDJM19:travel_time,BSW23:courier_scheduling} and supply chain management \citep{GFS19:retail_location}, assortment optimization \citep{KU20:dynamic_assortment,FZLZ22:customer_choice} and revenue management \citep{FLS16:online_retailer,ABHHMP23:pricing_hetero} as well as healthcare operations \citep{BORS16:chemo,BB20:online_decision_high,BP23:hospital}.

The machine learning algorithms used for prediction are prone to overfitting the available data. Overfitted models perform well on the training data used to calibrate the model, but their performance deteriorates when exposed to new, unseen data. This undesirable effect is amplified if the output of a machine learning model is used as input to a downstream optimization model; this phenomenon is known by different communities as the \emph{Optimizer's Curse} \citep{smith2006optimizer} or the \emph{Error-Maximization Effect of Optimization} \citep{michaud1989markowitz}. Traditionally, overfitting is addressed with regularization techniques that penalize complex models characterized by large and/or dense model parameters \citep{hastie2009elements, murphy2022}. A contemporary alternative from the robust optimization community frames machine learning problems as Stackelberg leader-follower games where the learner selects a model that performs best against a worst-case data-generating distribution selected by a conceptual adversary (`nature') from a predefined ambiguity set \citep{BTEGN09:rob_opt, RM19:dro, BdH22:rob_opt}. We talk about Wasserstein machine learning problems when the ambiguity set constitutes a Wasserstein ball centered around the empirical distribution of the available historical observations \citep{med17, BM19:quantifying_distr_model_risk, Gao_Kleywegt_2016}. Over the last few years, Wasserstein machine learning problems have attracted enormous attention in the machine learning and optimization communities; we refer to \cite{kmns19, KSW24:dro-survey} for recent reviews of the literature. Interestingly, Wasserstein learning problems admit dual characterizations as regularized learning problems \citep{NIPS2015, JMLR:v20:17-633, GCK22:wasserstein_dro_variation_reg}, and they thus contribute to a deeper understanding of the impact of regularization in machine learning. We note that other classes of ambiguity sets have been explored as well, such as moment ambiguity sets and those based on $\phi$-divergences (such as the Kullback-Leibler divergence). We will not delve into the comparative advantages of different ambiguity sets, and we instead refer the interested reader to the existing literature (see, \emph{e.g.}, \citealp{VPEK21:dro_is_optimal}, \citealp{kmns19, KSW24:dro-survey} and \citealp{L19:kl_divergence}).

Although Wasserstein formulations of many classical machine learning tasks admit formulations as convex optimization problems, these formulations scale exponentially in the discrete input features. This limitation has, thus far, confined the use of Wasserstein machine learning models primarily to datasets with exclusively continuous features. This constitutes a major restriction in operations management, where estimation problems frequently include discrete features. Recent examples 
include \cite{QCS22:dr_quantile_prediction}, who apply Wasserstein-based quantile regression to a bike sharing inventory management problem characterized by numeric and discrete features (\emph{e.g.}, the weather conditions, the hour of the day as well as the day of the week); \cite{SHBLS22:overbooked_overlooked}, who study appointment scheduling problems where most features are discrete (\emph{e.g.}, the day of the week, the patient's marital status and her insurance type); \cite{LTMCO23:detecting_human_trafficking}, who detect human trafficking from user review websites (here, the discrete features describe the presence or absence of indicative words and phrases); \cite{CMCSUWZ23:got_milk}, who predict the macronutrient content of human milk donations using discrete features such as the infant status (term vs preterm); and \cite{DHN23:DR_for_latent_covariate_mixtues}, who enforce fairness in offender recidivism prediction through the use of discrete features such as the offender's race, gender and the existence of prior misdemeanour charges. More broadly, at the time of writing, 240 of the 496 classification and 64 of the 159 regression problems in the popular UCI machine learning repository contain discrete input features \citep{UCI}.

{\color{black} We will demonstrate that na\"ively replacing discrete features with unbounded continuous features leads to pathological ambiguity sets in the Wasserstein learning problem whose worst-case distributions lack theoretical appeal and whose resulting prediction models underperform in practice. We also show that replacing discrete features with continuous features that are supported on suitably chosen convex hulls of the discrete supports leads to ambiguity sets that are \emph{equivalent} to those resulting from faithfully accounting for discrete features. Unfortunately, however, tractable reformulations of the Wasserstein learning problem over such bounded continuous features are only available for piece-wise affine convex loss functions, where we will demonstrate empirically that their solution times can be prohibitively long.} 

This paper studies {\color{black} linear} Wasserstein {\color{black} classification and regression problems} with mixed (continuous and discrete) features from a theoretical, computational and numerical perspective. {\color{black} Our work makes several contributions to the state-of-the-art:}
\begin{enumerate}
    \item[\emph{(i)}] From a \emph{theoretical perspective}, we demonstrate that while {\color{black} linear} Wasserstein {\color{black} classification and regression} with mixed features are inherently NP-hard, a wide range of problems can be solved in polynomial time. Also, contrary to {\color{black} problems} with exclusively continuous features, we establish that mixed-feature {\color{black} linear} Wasserstein {\color{black} classification and regression do} not reduce to regularized {\color{black} problems}. {\color{black} On the other hand, we demonstrate that mixed-feature linear Wasserstein classification and regression problems are equivalent to bounded continuous-feature problems with suitably chosen supports.}
    \item[\emph{(ii)}] From a \emph{computational perspective}, we propose a cutting plane scheme that solves progressively refined relaxations of the {\color{black} linear} Wasserstein {\color{black} classification and regression problems} as convex optimization problems. While our overall scheme is not guaranteed to terminate in polynomial time, we show that the key step of our algorithm---the identification of the most violated constraint---can be implemented efficiently for broad classes of problems, despite {\color{black} the presence of exponentially many constraints.}
    \item[\emph{(iii)}] {\color{black} From a \emph{numerical perspective}, we show that our cutting plane scheme is substantially faster than an equivalent polynomial-size problem reformulation via bounded continuous features.} We also show that our model can perform favorably against classical, regularized and alternative robust problem formulations on standard benchmark instances.
\end{enumerate}

Our paper is most closely related to the recent work of \cite{NIPS2015} and \cite{JMLR:v20:17-633}. \cite{NIPS2015} formulate the Wasserstein logistic regression problem as a convex optimization problem, they discuss the out-of-sample guarantees of their model, and they report numerical results on simulated and benchmark instances. \cite{JMLR:v20:17-633} extend their previous work to a wider class of Wasserstein classification and regression problems. Both papers focus on problems with exclusively continuous features, and their proposed formulations would scale exponentially in any discrete features. In contrast, our work studies {\color{black} linear} Wasserstein {\color{black} classification and regression} problems with mixed features: we examine the theoretical properties of such problems, we develop a practically efficient solution scheme, and we report numerical results. Our work also relates closely to a recent stream of literature that characterizes Wasserstein learning problems as regularized learning problems \citep{NIPS2015, JMLR:v20:17-633, blanchet_kang_murthy_2019, GCK22:wasserstein_dro_variation_reg}. In particular, we demonstrate that our mixed-feature Wasserstein learning problems do not admit an equivalent representation as regularized learning problems, which forms a notable contrast to the existing findings from the literature.

The present work constitutes a completely revised and substantially expanded version of a conference paper \citep{selvi2023wasserstein-non-redacted}. While that work focuses on logistic regression, the present paper studies broad classes of {\color{black} linear} Wasserstein classification and regression problems. This expansion necessitates significant adaptations of the proof for the computational complexity of the Wasserstein learning problem (\emph{cf.}~Theorem~\ref{thm:complexity} in {\color{black} Section~\ref{sec-complexity}}), an entirely new proof for the absence of regularized problem formulations (\emph{cf.}~Theorem~\ref{thm-equivalency} in {\color{black} Section~\ref{sec-complexity}}) that applies to any loss function (as opposed to only the log-loss function in \citealp{selvi2023wasserstein-non-redacted}) {\color{black} and any non-trivial Wasserstein learning instance}, as well as a substantially generalized cutting plane scheme (\emph{cf.}~Algorithms~\ref{alg-cutting} and~\ref{alg-most-violated} as well as Theorem~\ref{thm-violation} in Section~\ref{sec-cutting}). We also {\color{black} show that the mixed-feature Wasserstein learning problem is equivalent to a bounded continuous-feature formulation, and we} present a considerably augmented set of numerical results that encompass both classification and regression problems (\emph{cf.}~Section~\ref{sec-numerics}).

The remainder of this paper is organized as follows. {\color{black} Section~\ref{sec-wasser} introduces the mixed-feature linear Wasserstein classification and regression problems of interest, and it develops a unified representation of both problems as convex programs of exponential size. Section~\ref{sec-cutting} presents and analyzes our cutting plane solution approach for this exponential-size convex program. Section~\ref{sec-complexity} shows that while mixed-feature linear Wasserstein classification and regression problems are generically NP-hard, important special cases can be solved in polynomial time. We also show that mixed-feature linear Wasserstein classification and regression problems do not reduce to regularized problems. Section~\ref{sec-comparison} contrasts our mixed-feature model against formulations that replace discrete features with (bounded or unbounded) continuous ones. We report on numerical experiments in Section~\ref{sec-numerics}, and we offer concluding remarks in Section~\ref{sec-conclusions}. Exponential-size convex reformulations of the mixed-feature linear Wasserstein classification and regression problems, which are unified by our representation in Section~\ref{sec-wasser}, as well as all proofs are relegated to the e-companion.} All datasets and source codes accompanying this work are available open source.\footnote{Website: \url{https://anonymous.4open.science/r/Wasserstein-Mixed-Features-088D/}.}

\textbf{Notation.}
We denote by $\mR$ ($\mR_+$, $\mR_-$) the set of (non-negative, non-positive) real numbers, by $\mathbb{N}$ the set of positive integers, and we define $\mathbb{B} = \{ 0, 1 \}$ as well as $[N] = \{ 1, \ldots, N \}$ for $N \in \mathbb{N}$. For a proper cone $\mathcal{C} \subseteq \mR^{n}$, we write $\bx \preccurlyeq_{\mathcal{C}} \bx'$ and $\bx \prec_{\mathcal{C}} \bx'$ to abbreviate $\bx' - \bx \in \mathcal{C}$ and $\bx' - \bx \in \text{int } \mathcal{C}$, respectively. The dual norm of $\lVert \cdot \rVert$ is $\lVert \bx \rVert_{*} = \sup_{\bx' \in \mR^n} \{ \bx^\top \bx' \, : \, \lVert \bx' \rVert \leq 1 \}$, and the cone dual to a cone $\mathcal{C}$ is $\mathcal{C}^* = \{ \bx' \, : \, \bx^\top \bx' \geq 0 \;\; \forall \bx \in \mathcal{C} \}$. The support function of a set $\mX \subseteq \mR^n$ is $\mathcal{S}_{\mX}(\bx) = \sup \{ \bx^\top\bx' \, : \, \bx' \in \mX \}$. For a function $L: \mX \rightarrow \mR$, we define the Lipschitz modulus as $\mathrm{lip}(L) = \sup \{ \lvert L(\bx) - L(\bx') \rvert \, / \, \lVert \bx - \bx' \rVert \, : \, \bx, \bx' \in \mX, \; \bx \neq \bx' \}$.
The set $\mathcal{P}_0 (\Xi)$ contains all probability distributions supported on $\Xi$, and the Dirac distribution $\delta_{\bm{x}} \in \mathcal{P}_0 (\mathbb{R}^n)$ places unit probability mass on $\bm{x} \in \mathbb{R}^n$. The indicator function $\indicate{\mathcal{E}}$ attains the value $1$ ($0$) whenever the logical expression $\mathcal{E}$ is (not) satisfied.

\section{\color{black} Mixed-Feature Wasserstein Classification and Regression}\label{sec-wasser}

We study learning problems over $N$ data points $\bxi^n = (\bx^n, \bz^n, y^n) \in \Xi = \mX \times \mC \times \mY$, $n \in [N]$, where $\bx^n$, $\bz^n$ and $y^n$ represent the numerical features, the discrete features and the output variable, respectively. We assume that the support $\mX$ of the numerical features is a closed and convex subset of $\mR^{M_\mx}$. The support $\mC$ of the $K$ discrete features satisfies $\mC = \mC (k_1) \times \ldots \times \mC (k_K)$, where $k_m \in \mathbb{N} \setminus \{ 1 \}$ denotes the number of values that the $m$-th discrete feature can attain, $m \in [K]$, and $\mC (s) = \{ \bz \in \mB^{s - 1} \, : \, \sum_{i \in [s-1]} z_i \leq 1 \}$ is the one-hot feature encoding. We let $M_\mz = \sum_{m \in [K]} (k_m - 1)$ denote the number of coefficients associated with the discrete features. The support $\mY$ of the output variable is $\{ -1, +1 \}$ for classification and a closed and convex subset of $\mR$ for regression problems, respectively. \mbox{We wish to solve the Wasserstein learning problem}
\begin{equation}\label{eq:the_mother_of_all_problems}
    \begin{array}{l@{\quad}l}
        \displaystyle \mathop{\mathrm{minimize}}_{\bbeta} & \displaystyle \sup_{\mQ \in \mathfrak{B}_\epsilon (\widehat{\mP}_N)} \; \mE_\mQ \left[ l_{\bbeta} (\bx, \bz, y) \right] \\[5mm]
        \displaystyle \mathrm{subject\;to} & \displaystyle \bbeta = (\beta_0, \bbeta_\mx, \bbeta_\mz) \in \mR^{1 + M_\mx + M_\mz},
    \end{array}
\end{equation}
where the ambiguity set $\mathfrak{B}_\epsilon (\widehat{\mP}_N) = \{ \mQ \in \mathcal{P}_0 (\Xi) \, : \, \mathrm{W} (\mQ, \widehat{\mP}_N) \leq \epsilon \}$ represents the Wasserstein ball of radius $\epsilon > 0$ that is centered at the empirical distribution $\widehat{\mathbb{P}}_N = \frac{1}{N} \sum_{n \in [N]} \delta_{\bm{\xi}^n}$ placing equal probability mass on the $N$ data points $\bm{\xi}^n$, $n \in [N]$, as per the following definition.

\begin{defi}[Wasserstein Distance]\label{def:wasserstein}
    The type-1 \emph{Wasserstein} (\emph{Kantorovich-Rubinstein}, or \emph{earth mover's}) \emph{distance} between two distributions $\mP \in \mathcal{P}_0 (\Xi)$ and $\mQ \in \mathcal{P}_0 (\Xi)$ is defined as
    \begin{equation*}\label{eq:Wass}
        \mspace{-11mu}
        \mathrm{W} (\mP, \mQ) := \inf_{\Pi \in \mathcal{P}_0 (\Xi^2)} \left\{ \int_{\Xi^2} d(\bxi,\bxi') \, \Pi(\mathrm{d} \bxi, \mathrm{d} \bxi')  \ : \  \Pi(\mathrm{d} \bxi, \Xi) = \mP(\mathrm{d} \bxi), \, \Pi(\Xi, \mathrm{d} \bxi') = \mQ(\mathrm{d} \bxi') \right\},
    \end{equation*}
    where the ground metric $d$ on $\Xi$ satisfies
    \begin{subequations}\label{eq:ground_metric}
        \begin{equation}\label{eq:ground_metric:overall}
            d (\bxi,\bxi') = 
            \lVert \bx-\bx' \rVert + \kappa_\mz d_\mz(\bz, \bz') + \kappa_\my d_\my (y,y')
            \quad
            \forall \bxi = (\bx,\bz,y) \in \Xi, \, \bxi' = (\bx',\bz',y') \in \Xi
        \end{equation}
        with $\kappa_\mz, \kappa_\my > 0$ as well as, for some $p \geq 1$,
        \begin{equation}\label{eq:ground_metric:discrete}
            d_\mz (\bz, \bz') = \left( \sum_{m \in [K]} \indicate{\bz_m \neq \bz'_m} \right)^{1 / p}
            \quad \text{and} \quad
            d_\my (y, y') = \begin{cases}
                \indicate{y \neq y'} & \text{if } \mY = \{ -1, +1 \}, \\
                \lvert y - y' \rvert & \text{otherwise}.
            \end{cases}
        \end{equation}
    \end{subequations}
\end{defi}
{\color{black} An important special case of Definition~\ref{def:wasserstein} arises when $\kappa_\my = \infty$. In this setting, any transport between samples with different outputs incurs infinite cost, and hence the ambiguity set only contains distributions that preserve the empirical output values. In other words, $\mathfrak{B}_\epsilon(\widehat{\mP}_N)$ then consists of perturbations of the feature distribution conditional on $y$, while the marginal distribution of $y$ remains fixed.}

The loss function $l_{\bbeta} (\bx, \bz, y): \mX \times \mC \times \mY \rightarrow \mR_+$ in problem~\eqref{eq:the_mother_of_all_problems} satisfies 
\begin{equation*}
    l_{\bbeta}(\bx,\bz,y) = \begin{cases}
        L(y \cdot [\beta_0 + \bbeta_\mx{}^\top \bx + \bbeta_\mz{}^\top \bz]) & \text{if } \mY = \{ -1, +1 \}, \\
        L ( \beta_0 + \bbeta_\mx{}^\top \bx + \bbeta_\mz{}^\top \bz - y ) & \text{otherwise},
    \end{cases}
\end{equation*}
where $L : \mR \rightarrow \mR_+$ measures the similarity between the prediction $\beta_0 + \bbeta_\mx{}^\top \bx + \bbeta_\mz{}^\top \bz$ and the output $y$. {\color{black} With a slight abuse of terminology, we also refer to $L$ as the loss function whenever the context is clear.} For both classification and regression problems, we consider two settings:
\begin{enumerate}
    \item[(i)] $L$ is convex and Lipschitz continuous with Lipschitz modulus $\mathrm{lip}(L)$, $\mX = \mR^{M_\mx}$ and $\mY = \{ -1, +1 \}$ (for classification problems) or $\mY = \mR$ (for regression problems);
    \item[(ii)] $L$ satisfies $L(e) = \max_{j \in [J]} \{ a_j e + b_j \}$, and $\mX \subseteq \mR^{M_\mx}$ and $\mY \subseteq \mR$ are closed and convex.
\end{enumerate}
In either case, we assume that $L$ is not constant.

The Wasserstein learning problem~\eqref{eq:the_mother_of_all_problems} offers attractive generalization guarantees. While the classical choice of Wasserstein radii suffers from the curse of dimensionality \citep{med17}, recent work has developed asymptotic \citep{blanchet_kang_murthy_2019, blanchet_and_kang} as well as finite sample guarantees \citep{JMLR:v20:17-633, G22:finite_sample_guarantees} that apply to Wasserstein radii of the order $\mathcal{O} (1 / \sqrt{N})$.

We next review a result that expresses the Wasserstein learning problem~\eqref{eq:the_mother_of_all_problems} as a convex optimization problem.

\begin{obs}\label{obs-derivation}
    The Wasserstein learning problem~\eqref{eq:the_mother_of_all_problems} admits the equivalent formulation
    \begin{equation}\label{eq-dro-gen2}
        \begin{array}{l@{\quad}l}
        \displaystyle \mathop{\mathrm{minimize}}_{\bbeta, \lambda, \bs} & \displaystyle \lambda \epsilon + \frac{1}{N} \sum_{n \in [N]} s_n \\[5mm]
        \displaystyle \mathrm{subject\;to} & \displaystyle \sup_{(\bx, y) \in \mX \times \mY} \left\{ l_{\bbeta} (\bx, \bz, y) - \lambda \lVert \bx - \bx^n \rVert  - \lambda \kappa_\my d_\my (y, y^n) \right\} - \lambda \kappa_\mz d_\mz (\bz, \bz^n) \leq s_n \\[-2mm]
        & \displaystyle \mspace{415mu} \forall n \in [N], \; \forall \bz \in \mC \\[2mm]
        & \displaystyle \bbeta = (\beta_0, \bbeta_\mx, \bbeta_\mz) \in \mR^{1 + M_\mx + M_\mz}, \;\; \lambda \in \mathbb{R}_+, \;\; \bs \in \mR^N_+.
        \end{array}
    \end{equation}
\end{obs}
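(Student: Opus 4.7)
The plan is to reduce the observation to three standard manipulations: (i) applying strong duality for Wasserstein distributionally robust optimization to replace the inner supremum over $\mathfrak{B}_\epsilon(\widehat{\mP}_N)$ with an infimum over a scalar Lagrange multiplier $\lambda \geq 0$; (ii) exploiting the additive form~\eqref{eq:ground_metric:overall} of the ground metric together with the finiteness of $\mC$ to split the resulting supremum over $\bxi \in \Xi$ into a maximum over $\bz \in \mC$ of a supremum over $(\bx, y) \in \mX \times \mY$; and (iii) linearizing this maximum via epigraphical variables $s_n$, producing $|\mC|$ constraints per data point exactly as in~\eqref{eq-dro-gen2}.

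Concretely, I would first invoke the Wasserstein strong-duality theorem of \cite{Gao_Kleywegt_2016, blanchet_kang_murthy_2019, med17} to obtain
\begin{equation*}
    \sup_{\mQ \in \mathfrak{B}_\epsilon(\widehat{\mP}_N)} \mE_\mQ \left[ l_{\bbeta}(\bx, \bz, y) \right] \;=\; \inf_{\lambda \geq 0} \left\{ \lambda \epsilon + \frac{1}{N} \sum_{n \in [N]} \sup_{\bxi \in \Xi} \left[ l_{\bbeta}(\bxi) - \lambda \, d(\bxi, \bxi^n) \right] \right\}.
\end{equation*}
The nontrivial step here is verifying the hypotheses of this duality result: $\Xi$ is Polish (since $\mX, \mY$ are closed and $\mC$ is finite) and $l_{\bbeta}$ is upper semicontinuous (from the convexity of $L$), while the requisite growth condition holds trivially in setting~(ii) because $\mX \times \mY$ is bounded, and in setting~(i) because the Lipschitz continuity of $L$ renders the inner problem bounded above whenever $\lambda$ exceeds appropriate multiples of $\mathrm{lip}(L)$.

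Given this dual reformulation, the additivity of $d$ in its three coordinates allows me to factor
\begin{equation*}
    \sup_{\bxi \in \Xi} \left[ l_{\bbeta}(\bxi) - \lambda \, d(\bxi, \bxi^n) \right] = \max_{\bz \in \mC} \left\{ \sup_{(\bx, y) \in \mX \times \mY} \left[ l_{\bbeta}(\bx, \bz, y) - \lambda \lVert \bx - \bx^n \rVert - \lambda \kappa_\my d_\my(y, y^n) \right] - \lambda \kappa_\mz d_\mz(\bz, \bz^n) \right\},
\end{equation*}
where the outer supremum over $\bz$ becomes a maximum because $\mC$ is finite. Introducing $s_n$ as an upper bound on this maximum yields precisely the constraint system displayed in~\eqref{eq-dro-gen2}. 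The restriction $\bs \in \mR_+^N$ is then implied rather than imposed, since evaluating any such constraint at $\bz = \bz^n$ and $(\bx, y) = (\bx^n, y^n)$ gives $s_n \geq l_{\bbeta}(\bx^n, \bz^n, y^n) \geq 0$. The principal obstacle in executing this plan is the verification of strong duality; once that is secured, the remaining steps are routine algebraic manipulations.
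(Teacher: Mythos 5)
Your proposal is correct and follows essentially the same route as the paper, which simply notes that the observation "follows from similar arguments as in the proof of Theorem~1 by Shafieezadeh-Abadeh et al.\ (2015)" --- i.e., the standard Wasserstein strong-duality reformulation followed by decomposition of the inner supremum along the additive ground metric and epigraphical linearization over the finite set $\mC$. Your additional remarks on verifying the duality hypotheses and on the non-negativity of $\bs$ being implied rather than imposed are accurate and fill in details the paper omits.
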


{\color{black} When $\kappa_\my = \infty$, the semi-infinite constraint in Observation~\ref{obs-derivation} simplifies to
\begin{equation*}
    \sup_{\bx \in \mX} \left\{ l_{\bbeta} (\bx, \bz, y^n) - \lambda \lVert \bx - \bx^n \rVert  \right\} - \lambda \kappa_\mz d_\mz (\bz, \bz^n) \leq s_n \qquad \forall n \in [N], \; \forall \bz \in \mC.
\end{equation*}}

Problem~\eqref{eq-dro-gen2} contains embedded maximization problems, and it comprises exponentially many constraints. {\color{black} We next prove the existence of equivalent reformulations of~\eqref{eq-dro-gen2} for classification and regression problems, respectively, that do not contain embedded optimization problems. Since the resulting reformulations exhibit an exponential number of constraints, the next section will develop a cutting plane approach to introduce these constraints iteratively.}

{\color{black}Our equivalent reformulations of~\eqref{eq-dro-gen2} leverage the unified problem representation
\begin{equation}\label{eq-most-vio-generic}
    \begin{array}{l@{\quad}l@{\qquad}l}
        \displaystyle \mathop{\mathrm{minimize}}_{\btheta, \bsigma} & \displaystyle f_0(\btheta,\bsigma) \\
        \displaystyle \mathrm{subject\;to} & \displaystyle f_{ni} (g_{ni} (\btheta, \bm{\xi}^n_{-\mathbf{z}}; \bz)) - h_{ni} (\btheta, \bm{\xi}^n_{-\mathbf{z}}; d_{\mz} (\bz, \bz^n)) \leq \sigma_n & \displaystyle \forall n \in [N], \; \forall i \in \mathcal{I}, \; \forall \bz \in \mC \\
        & \displaystyle \btheta \in \Theta, \;\; \bsigma \in \mR^{N}_+,
    \end{array}
\end{equation}
where $\bm{\xi}^n_{-\mathbf{z}} = (\bm{x}^n, y^n)$ and $\mathcal{I}$ is a finite index set. To ensure that~\eqref{eq-most-vio-generic} is convex, we stipulate that $f_0 : \mR^{M_{\btheta}} \times \mR^{N}_+ \rightarrow \mR$  and $f_{ni} : \mR \rightarrow \mR$ are convex, $g_{ni} : \mR^{M_{\btheta}} \times (\mathbb{X} \times \mathbb{Y}) \times \mR^{M_\mz} \rightarrow \mR$ is bi-affine in $\btheta$ and $\bz$ for every fixed $\bm{\xi}^n_{-\mathbf{z}}$, $h_{ni}: \mR^{M_{\btheta}} \times (\mathbb{X} \times \mathbb{Y}) \times \mR \rightarrow \mR$ is concave in $\btheta$ for every fixed $\bm{\xi}^n_{-\mathbf{z}}$ and every fixed value of its last component, $n \in [N]$ and $i \in \mathcal{I}$, and $\Theta \subseteq \mR^{M_{\btheta}}$ is a convex set. We further assume that $f_0$ is non-decreasing in $\bsigma$ so that~\eqref{eq-most-vio-generic} is not unbounded.

\begin{thm}\label{thm-convex-reformulation}
    The following problems admit equivalent reformulations in the form of~\eqref{eq-most-vio-generic}:
    \begin{enumerate}
        \item[\emph{(i)}] the mixed-feature Wasserstein classification and regression problems with convex and Lipschitz continuous loss functions $L$ as well as the continuous feature support $\mX = \mR^{M_\mx}$.
        \item[\emph{(ii)}] the mixed-feature Wasserstein classification and regression problems with piece-wise affine convex loss functions $L(e) = \max_{j \in [J]} \{ a_j e + b_j \}$ as well as the continuous feature support
        \begin{equation*}
            \begin{array}{l@{}l}
                \displaystyle
                \mX = \{ \bx \in \mR^{M_\mx} \, : \, \bm{C} \bx \preccurlyeq_{\mathcal{C}} \bm{d} \} \;\;
                & \text{for some }
                \bm{C} \in \mR^{r \times {M_\mx}}, \bm{d} \in \mR^r \\
                & \displaystyle
                \text{and proper convex cone } \mathcal{C} \subseteq \mR^{r}
            \end{array}
        \end{equation*}
        for classification problems as well as the support
        \begin{equation*}
            \begin{array}{l@{}l}
                \displaystyle
                \mX \times \mY = \left\{ (\bx,y) \in \mR^{M_\mx+1} \, : \, \bm{C}_\mx \bx + \bm{c}_\my \cdot y \preccurlyeq_{\mathcal{C}} \bm{d} \right\} \;\;
                & \text{for some }
                \bm{C}_\mx \in \mR^{r \times {M_\mx}}, \bm{c}_\my \in \mR^{r}, \bm{d} \in \mR^{r} \\
                & \displaystyle
                \text{and proper convex cone } \mathcal{C} \subseteq \mR^{r}
            \end{array}
        \end{equation*}
        for regression problems, assuming that the supports admit Slater points.
    \end{enumerate}
\end{thm}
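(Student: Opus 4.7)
The plan is to start from the semi-infinite convex reformulation~\eqref{eq-dro-gen2} supplied by Observation~\ref{obs-derivation} and eliminate the inner supremum over $(\bx, y) \in \mX \times \mY$, retaining only the outer enumeration over $\bz \in \mC$ and a suitably chosen finite index set $\mathcal{I}$. The key structural observation that underpins both parts is that $\bz$ enters the loss only through the linear term $\bbeta_\mz^\top \bz$, which can be pulled outside the inner sup; any auxiliary variables produced in eliminating that sup may therefore be indexed by $n$ (and possibly by a piece index), but never by $\bz$. The ground-metric term $-\lambda \kappa_\mz d_\mz(\bz, \bz^n)$ from~\eqref{eq-dro-gen2} will play the role of $h_{ni}$, while the reformulated loss supplies $f_{ni}(g_{ni}(\cdot))$ and the exterior objective $\lambda \epsilon + \frac{1}{N} \sum_n \sigma_n$ plays the role of $f_0$.

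For part~\emph{(i)}, I would invoke the classical Lipschitz--duality identity: for convex Lipschitz $L$ on $\mX = \mR^{M_\mx}$, the supremum $\sup_{\bx} \{ L(a^\top \bx + b) - \lambda \lVert \bx - \bx^n \rVert \}$ equals $L(a^\top \bx^n + b)$ when $\lambda \geq \mathrm{lip}(L)\lVert a \rVert_*$ and is $+\infty$ otherwise. Applying this with $a = y^n \bbeta_\mx$ (classification) or $a = \bbeta_\mx$ (regression), and handling $y$ either by enumerating $\{-1, +1\}$ or by a second Lipschitz argument with $\lambda \kappa_\my \geq \mathrm{lip}(L)$, reduces the inner sup to an evaluation of $L$ at $\bx = \bx^n$ and $y \in \{y^n, -y^n\}$, shifted by the $\bbeta_\mz^\top \bz$ term. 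All finiteness conditions become dual-norm inequalities on $(\bbeta, \lambda)$ that I absorb into a convex $\Theta$. The residual constraint reads $L(\text{affine in }\btheta\text{ and }\bz) - \lambda \kappa_\mz d_\mz(\bz, \bz^n) \leq \sigma_n$, which matches the template with $f_{ni} = L$ convex, $g_{ni}$ bi-affine in $(\btheta, \bz)$, $h_{ni}$ linear (hence concave) in $\btheta = (\bbeta, \lambda)$, and $\lvert \mathcal{I} \rvert \in \{1, 2\}$.

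For part~\emph{(ii)}, I would commute $\max_{j \in [J]}$ with $\sup_{(\bx, y)}$, obtaining $J$ inner conic linear programs per sample. After extracting the $\bz$-linear term outside each of them, each inner sup becomes a support-function-type computation over $\mX$ (classification) or $\mX \times \mY$ (regression) penalized by a $\lambda$-weighted norm term. Since both supports are conically representable and admit Slater points, strong conic duality converts each sup into the existence of dual variables $\bgamma_{nj} \in \mathcal{C}^*$ satisfying equality and dual-norm inequalities linking them to $\bbeta_\mx$, $\lambda$ (and, for regression, to the support-defining data $\bm{C}_\mx$ and $\bm{c}_\my$). The number of such variables is $\mathcal{O}(NJ)$, independent of $\lvert \mC \rvert$, so I enlarge $\btheta$ to include $\bgamma = \{ \bgamma_{nj} \}$ and place dual feasibility into $\Theta$. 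Each remaining per-$(n, j, \bz)$ constraint then reads $a_j \bbeta_\mz^\top \bz + [\text{affine in the enlarged } \btheta] - \lambda \kappa_\mz d_\mz(\bz, \bz^n) \leq \sigma_n$, fitting the template with $\mathcal{I} = [J]$ (or $[2J]$ for classification, indexed by a piece and a sign of $y$).

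The main obstacle is the bookkeeping in part~\emph{(ii)}: if the dual multipliers produced by strong duality were $\bz$-dependent, the formulation would blow up to exponential size and fall outside the template. What rescues the argument is the $\bz$-separability of the loss, which allows the $\bz$-dependent part to be extracted \emph{before} dualization; only then do the dual variables depend solely on $(n, j)$. The remaining verifications---convexity and $\bsigma$-monotonicity of $f_0$, convexity of each $f_{ni}$, bi-affinity of $g_{ni}$ in $(\btheta, \bz)$, concavity of $h_{ni}$ in $\btheta$ for any fixed value of $d_\mz(\bz, \bz^n)$, and convexity of $\Theta$---follow by inspection of the resulting expressions.
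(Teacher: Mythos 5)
Your proposal is correct and follows essentially the same route as the paper: the paper's Appendices~A and~B prove the theorem by deriving exactly the four reformulations you describe (the Lipschitz-duality identity of Lemma~\ref{lem-conv-class} with label enumeration for classification and a compound-norm variant for regression in case~\emph{(i)}, and Slater-based conic duality yielding $(n,j)$-indexed multipliers $\bq_{nj}$ in case~\emph{(ii)}), and then reading off $\mathcal{I}$, $f_{ni}$, $g_{ni}$, $h_{ni}$ and $\Theta$ precisely as you do. Your observation that the $\bz$-linear term must be extracted before dualization so that the dual variables do not proliferate over $\mC$ is indeed the crux that keeps the auxiliary variables polynomial in number, exactly as in Propositions~\ref{prop-der-class-aff} and~\ref{prop-der-reg-aff}.
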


The Appendices~A and~B in the e-companion prove Theorem~\ref{thm-convex-reformulation} for classification and regression problems, respectively. The proofs follow similar arguments as those of \cite{JMLR:v20:17-633}, adapted to the presence of discrete features as well as our ground metric.}

\section{Cutting Plane Solution Scheme}\label{sec-cutting}

{\color{black} The unified problem representation~\eqref{eq-most-vio-generic} of the mixed-feature Wasserstein regression and classification problems (\emph{cf.}~Theorem~\ref{thm-convex-reformulation}) is convex, but it comprises  constraints whose number scales exponentially in $K$, the number of discrete features. We therefore develop a cutting plane approach that iteratively introduces only those constraints that are most violated by a sequence of incumbent solutions.}

\begin{algorithm}[tb]
    \begin{algorithmic}
        \caption{Cutting Plane Scheme for Problem~\eqref{eq-most-vio-generic}} \label{alg-cutting}
        \Require (Possibly empty) initial constraint set $\mathcal{W} \subseteq [N] \times \mathcal{I} \times \mC$.
        \Ensure Optimal solution $(\btheta^\star, \bsigma^\star)$ to problem~\eqref{eq-most-vio-generic}.
        \State Initialize $(\mathrm{LB}, \mathrm{UB}) = (-\infty, +\infty)$ as lower and upper bounds for problem~\eqref{eq-most-vio-generic}.
        \While{$\mathrm{LB} < \mathrm{UB}$}
            \State Let $(\btheta^\star, \bsigma^\star)$ be optimal in the relaxation of~\eqref{eq-most-vio-generic} involving the constraints $(n, i, \bm{z}) \in \mathcal{W}$.
            \For{$n \in [N]$}
                \State Identify, for each $i \in \mathcal{I}$, a most violated constraint
                \begin{equation*}
                    \bm{z} (n, i) \in \mathop{\arg \max}_{\bm{z} \in \mC} \left\{
                    f_{ni} (g_{ni} (\btheta^\star, \bm{\xi}^n_{-\mathbf{z}}; \bz)) - h_{ni} (\btheta^\star, \bm{\xi}^n_{-\mathbf{z}}; d_{\mz} (\bz, \bz^n)) - \sigma^\star_n
                    \right\}
                \end{equation*}
                \State associated with $(\btheta^\star, \bsigma^\star)$ and $(n, i)$, and denote the constraint violation by $\vartheta (n, i)$.
                \State Let $i(n) \in \arg \max \{ \vartheta (n, i) : i \in \mathcal{I} \}$ and add $(n, i(n), \bm{z} (n, i(n)))$ to $\mathcal{W}$ if $\vartheta (n, i(n)) > 0$.
            \EndFor
            \State Define $\bm{\vartheta}^\star \in \mathbb{R}^N$ via $\vartheta^\star_n = \max \{ \vartheta (n, i(n)), \, 0 \}$, $n \in [N]$.
            \State Update $\mathrm{LB} = f_0 (\btheta^\star, \bsigma^\star)$ and $ \mathrm{UB} = \min \{ \mathrm{UB}, \, f_0 (\btheta^\star,\bsigma^\star + \bm{\vartheta}^\star) \}$.
        \EndWhile
    \end{algorithmic}
\end{algorithm}

{\color{black} Algorithm~\ref{alg-cutting} outlines our cutting plane scheme, which follows the standard methodology of this class of techniques. The algorithm iteratively solves a sequence of relaxations of problem~\eqref{eq-most-vio-generic}, where each relaxation only includes a subset $\mathcal{W}$ of the constraints indexed by $[N] \times \mathcal{I} \times \mC$. Consequently, any optimal solution $(\btheta^\star, \bsigma^\star)$ obtained from the relaxation gives rise to a valid lower bound for the optimal value of problem~\eqref{eq-most-vio-generic}. At each iteration, the algorithm identifies and incorporates the most violated constraint from problem~\eqref{eq-most-vio-generic} with respect to the current solution $(\btheta^\star, \bsigma^\star)$. This procedure ensures the generation of a monotonically non-decreasing sequence of lower bounds, which is guaranteed to converge to the optimal value of problem~\eqref{eq-most-vio-generic} in a finite number of iterations. A common challenge in cutting plane techniques is the derivation of upper bounds, which allow us to quantify the optimality gap, particularly when the algorithm is terminated prematurely, such as when reaching a predefined time limit. In our case, the special structure of problem~\eqref{eq-most-vio-generic} enables us to obtain such upper bounds efficiently: We adjust the optimal solution $(\btheta^\star, \bsigma^\star)$ of the relaxation by increasing the slack variables $\bsigma^\star$ so as to account for the maximum constraint violations $\bm{\vartheta}^\star$.

The next result formalizes our intuition.}

\begin{prop}\label{prop-cutting}
    Algorithm~\ref{alg-cutting} terminates in finite time with an optimal solution $(\btheta^\star, \bsigma^\star)$ to problem~\eqref{eq-most-vio-generic}. Moreover, $\mathrm{LB}$ and $\mathrm{UB}$ constitute monotonic sequences of lower and upper bounds on the optimal value of~\eqref{eq-most-vio-generic} throughout the execution of the algorithm.
\end{prop}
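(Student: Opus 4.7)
My plan is to verify three claims in sequence: (a) $\mathrm{LB}$ is a monotonically non-decreasing lower bound on the optimal value of~\eqref{eq-most-vio-generic}, (b) $\mathrm{UB}$ is a monotonically non-increasing upper bound, and (c) the algorithm terminates in finitely many iterations and certifies optimality at the exit.

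For (a), I would note that in each iteration $(\btheta^\star, \bsigma^\star)$ is optimal in the relaxation of~\eqref{eq-most-vio-generic} that retains only the constraints indexed by the current $\mathcal{W} \subseteq [N] \times \mathcal{I} \times \mC$. Since the relaxation's feasible region contains that of the full problem, its optimum $f_0(\btheta^\star, \bsigma^\star)$ is at most the true optimum, establishing the lower bound. Monotonicity follows because $\mathcal{W}$ only grows across iterations, which shrinks the relaxation's feasible region and can only raise its optimum.

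For (b), I would argue that the adjusted iterate $(\btheta^\star, \bsigma^\star + \bm{\vartheta}^\star)$ is feasible for the full problem. By construction, $\vartheta^\star_n$ upper bounds the largest violation of any constraint indexed by sample $n$, taken over $i \in \mathcal{I}$ and $\bz \in \mC$ and clipped to be non-negative. Inflating $\sigma^\star_n$ by $\vartheta^\star_n$ therefore simultaneously repairs every constraint associated with sample $n$ in~\eqref{eq-most-vio-generic}. Because $f_0$ is assumed non-decreasing in $\bsigma$, the value $f_0(\btheta^\star, \bsigma^\star + \bm{\vartheta}^\star)$ is a valid upper bound on the optimum, and the running minimum defining $\mathrm{UB}$ preserves monotonicity.

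For (c), finiteness of $\mC$---each factor $\mC(k_m) \subseteq \mB^{k_m - 1}$ admits at most $k_m$ points---and of $\mathcal{I}$ render $[N] \times \mathcal{I} \times \mC$ a finite index set. Each iteration either adds at least one previously absent triple $(n, i(n), \bz(n, i(n)))$ to $\mathcal{W}$, which can happen at most $|[N] \times \mathcal{I} \times \mC|$ times, or produces $\bm{\vartheta}^\star = \mathbf{0}$, in which case the update yields $\mathrm{UB} \leq f_0(\btheta^\star, \bsigma^\star) = \mathrm{LB}$ and the loop exits. Consequently, the algorithm halts in finitely many iterations. At termination, the sandwich $\mathrm{LB} \leq \text{opt} \leq \mathrm{UB}$ combined with $\mathrm{LB} \geq \mathrm{UB}$ forces $\mathrm{LB} = \mathrm{UB} = \text{opt}$. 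The step I expect to require the most care is confirming that the returned iterate is genuinely an optimal feasible solution of~\eqref{eq-most-vio-generic}: when termination is triggered by $\bm{\vartheta}^\star = \mathbf{0}$, the current $(\btheta^\star, \bsigma^\star)$ is feasible for~\eqref{eq-most-vio-generic} with objective value equal to the optimum, and is therefore optimal.
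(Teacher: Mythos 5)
Your proposal is correct and follows essentially the same route as the paper's proof: the relaxation argument for the monotone lower bounds, feasibility of the inflated iterate $(\btheta^\star, \bsigma^\star + \bm{\vartheta}^\star)$ for the upper bounds, and finiteness of $[N] \times \mathcal{I} \times \mC$ for termination. The only detail worth making explicit is why each added triple is genuinely new, namely that any constraint already in $\mathcal{W}$ is satisfied by the relaxation's optimizer and hence has non-positive violation, so it cannot be re-added.
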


A key step in Algorithm~\ref{alg-cutting} concerns the identification of a constraint $(n, i, \bm{z}) \in [N] \times \mathcal{I} \times \mC$ that is most violated by the incumbent solution $(\btheta^\star, \bm{\sigma}^\star)$. We next argue that the underlying combinatorial problem can be solved efficiently by Algorithm~\ref{alg-most-violated}.

\begin{algorithm}[tb]
    \begin{algorithmic}
        \caption{Identification of a Most Violated Constraint in Problem~\eqref{eq-most-vio-generic}}
        \label{alg-most-violated}
        \Require Incumbent solution $(\btheta^\star, \bsigma^\star)$ and constraint group index $(n, i) \in [N] \times \mathcal{I}$.
        \Ensure A most violated constraint index $\bz (n, i)$ in constraint group $(n, i)$.
        \State Initialize the candidate constraint index set $\mathcal{Z} = \emptyset$.
        \State Let $(\bm{w}, w_0) \in \mR^{M_\mz} \times \mR$ be such that $g_{ni} (\btheta^\star, \bm{\xi}^n_{-\mathbf{z}}; \bz) = \bm{w}^\top \bz + w_0$.
        \For{$\mu \in \{ \pm 1 \}$}
            \State Compute $\bz^\star_m \in \arg \max \{ \mu \cdot \bm{w}_m{}^\top \bz_m \, : \, \bz_m \in \mC (k_m) \setminus \{ \bz^n_m \} \}$ for all $m \in [K]$.
            \State Compute a permutation $\pi : [K] \rightarrow [K]$ such that
            \begin{equation*}
                \mu \cdot {\bw}_{\pi(m)}{}^\top (\bz^{\star}_{\pi (m)} - \bz^n_{\pi (m)})
                \; \geq \;
                \mu \cdot {\bw}_{\pi(m')}{}^\top (\bz^{\star}_{\pi (m')} - \bz^n_{\pi (m')})
                \qquad \forall 1 \leq m \leq m' \leq K.
            \end{equation*}
            \For{$\delta \in [K] \cup \{ 0 \}$}
                \State Add $\bz (\delta)$ to $\mathcal{Z}$, where $\bz_m (\delta) = \bz^{\star}_m$ if $\pi (m) \leq \delta$; $= \bz^n_m$ otherwise, $m \in [K]$.
            \EndFor
        \EndFor
        \State Select $\bz (n, i) \in \arg \max \{ f_{ni} (g_{ni} (\btheta^\star, \bm{\xi}^n_{-\mathbf{z}}; \bz)) - h_{ni} (\btheta^\star, \bm{\xi}^n_{-\mathbf{z}}; d_{\mz} (\bz, \bz^n)) - \sigma^\star_n \, : \, \bm{z} \in \mathcal{Z} \}$.
    \end{algorithmic}
\end{algorithm}

{\color{black}
Algorithm~\ref{alg-most-violated} determines a most violated constraint index, denoted as $\bm{z} (n, i)$, for a given incumbent solution $(\btheta^\star, \bm{\sigma}^\star)$ and a given constraint group $(n, i) \in [N] \times \mathcal{I}$ in the optimization problem~\eqref{eq-most-vio-generic}. We obtain such a constraint by maximizing the constraint left-hand side in~\eqref{eq-most-vio-generic},
\begin{equation*}
    f_{ni} (g_{ni} (\btheta^\star, \bm{\xi}^n_{-\mathbf{z}}; \bz)) - h_{ni} (\btheta^\star, \bm{\xi}^n_{-\mathbf{z}}; d_{\mz} (\bz, \bz^n)),
\end{equation*}
over $\bm{z} \in \mathbb{Z}$. Note that the constraint function comprises two distinct terms. The second term, $h_{ni} (\btheta^\star, \bm{\xi}^n_{-\mathbf{z}}; d_{\mz} (\bz, \bz^n))$, depends on $\bm{z}$ only through the distance metric $d_{\mz} (\bz, \bz^n)$ within the ground metric $d$ (\emph{cf.}~Definition~\ref{def:wasserstein}). Consequently, we can decompose the maximization of the difference of both terms into two steps. In a first step, we maximize the first term $f_{ni} (g_{ni} (\btheta^\star, \bm{\xi}^n_{-\mathbf{z}}; \bz))$ over all $\bm{z} \in \mathbb{Z}$ that satisfy $d_{\mz} (\bz, \bz^n) = \delta$ for any fixed $\delta \in [K] \cup \{ 0 \}$. This is what the inner for-loop in Algorithm~\ref{alg-most-violated} accomplishes, and it results in $K + 1$ candidate solutions $\bm{z} (\delta)$. In a second step, we can then select an optimal solution $\bm{z} (\delta^\star)$ that maximizes
\begin{equation*}
    f_{ni} (g_{ni} (\btheta^\star, \bm{\xi}^n_{-\mathbf{z}}; \bz (\delta))) - h_{ni} (\btheta^\star, \bm{\xi}^n_{-\mathbf{z}}; \delta)
\end{equation*}
across all $\delta \in [K] \cup \{ 0 \}$ by evaluating the constraint left-hand side for the $K + 1$ candidate solutions $\bm{z} (\delta)$. This is what the final step in Algorithm~\ref{alg-most-violated} implements.

To maximize $f_{ni} (g_{ni} (\btheta^\star, \bm{\xi}^n_{-\mathbf{z}}; \bz))$ over all $\bm{z} \in \mathbb{Z}$ satisfying $d_{\mz} (\bz, \bz^n) = \delta$, note first that $f_{ni}$ is assumed to be univariate and convex. Thus, its maximum over any compact set $\mathcal{X} \subseteq \mathbb{R}$ is attained either at $\min \{ x \, : \, x \in \mathcal{X} \}$ or $\max \{ x \, : \, x \in \mathcal{X} \}$. Hence, to maximize $f_{ni} (g_{ni} (\btheta^\star, \bm{\xi}^n_{-\mathbf{z}}; \bz))$ over all $\bm{z} \in \mathbb{Z}$ with $d_{\mz} (\bz, \bz^n) = \delta$, we only need to maximize and minimize the inner function $g_{ni} (\btheta^\star, \bm{\xi}^n_{-\mathbf{z}}; \bz)$ over all $\bm{z} \in \mathbb{Z}$ with $d_{\mz} (\bz, \bz^n) = \delta$. The maximization and minimization of $g_{ni}$ is unified by the outer for-loop in Algorithm~\ref{alg-most-violated}, which maximizes $\mu \cdot g_{ni} (\btheta^\star, \bm{\xi}^n_{-\mathbf{z}}; \bz)$ for $\mu \in \{ \pm 1 \}$. 

We claim that the optimization of $g_{ni} (\btheta^\star, \bm{\xi}^n_{-\mathbf{z}}; \bz)$ over $\bm{z} \in \mathbb{Z}$ with $d_{\mz} (\bz, \bz^n) = \delta$ can be decomposed into separate optimizations over $\bm{z}_m \in \mathbb{Z} (k_m)$, $m \in [K]$. Indeed, note that $g_{ni}$ is affine in $\bm{z}$ for fixed $\bm{\theta}^\star$, which allows us to express the mapping $\bm{z} \mapsto g_{ni} (\btheta^\star, \bm{\xi}^n_{-\mathbf{z}}; \bz)$ as $\bm{w}^\top \bm{z} + w_0$ for some $\bm{w} \in \mathbb{R}^{M_{\mz}}$ and $w_0 \in \mathbb{R}$. Moreover, the distance metric $d_{\mz} (\bz, \bz^n)$ only counts the number of subvectors $m \in [K]$ where $\bm{z}_m$ differs from $\bm{z}_m^n$, without considering the magnitude of the difference (as both vectors are one-hot encoded). Finally, we recall that $\mathbb{Z}$ is rectangular, that is, $\mathbb{Z} = \mathbb{Z} (k_1) \times \ldots \times \mathbb{Z} (k_K)$. Taken together, the aforementioned three observations imply that the optimization of $g_{ni} (\btheta^\star, \bm{\xi}^n_{-\mathbf{z}}; \bz)$ over $\bm{z} \in \mathbb{Z}$ with $d_{\mz} (\bz, \bz^n) = \delta$ can be broken into two interconnected steps: We need to select the $\delta$ subvectors $m_1 < \ldots < m_\delta$ where $\bm{z}$ differs from $\bm{z}^n$, and for each differing subvector $m$, we need to select a solution $\bm{z}^\star_m$ that optimizes $\bm{w}_m{}^\top (\bm{z}_m - \bm{z}^n_m)$ across all $\bm{z}_m \neq \bm{z}^n_m$. To this end, Algorithm~\ref{alg-most-violated} first computes for every $m \in [K]$ a solution $\bm{z}^\star_m$ that optimizes $\bm{w}_m{}^\top (\bm{z}_m - \bm{z}^n_m)$ over $\bm{z}_m \neq \bm{z}^n_m$ (first step inside the outer for-loop), it subsequently sorts the contributions $\bm{w}_m{}^\top (\bm{z}^{\star}_m - \bm{z}^n_m)$, $m \in [K]$ (second step inside the outer for-loop), and it finally selects the $\delta$ largest (smallest) contributions to maximize (minimize) $g_{ni}$ (inner for-loop).

We formalize the above intuition in the next result.}

\begin{thm}\label{thm-violation}
    For a given incumbent solution $(\btheta^\star, \bm{\sigma}^\star)$ and a constraint group index $(n, i) \in [N] \times \mathcal{I}$ in problem~\eqref{eq-most-vio-generic}, Algorithm~\ref{alg-most-violated} identifies a most violated constraint index $\bm{z} (n, i)$ in time $\mathcal{O} (M_{\mathbf{z}} + KT + K \log K)$, where $T$ is the time required to compute $f_{ni}$, $g_{ni}$ and $h_{ni}$.
\end{thm}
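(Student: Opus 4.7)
The plan is to decompose the constraint-violation maximization
\begin{equation*}
\max_{\bz \in \mC}\left\{ f_{ni}(g_{ni}(\btheta^\star, \bxi^n_{-\mathbf{z}}; \bz)) - h_{ni}(\btheta^\star, \bxi^n_{-\mathbf{z}}; d_\mz(\bz, \bz^n)) - \sigma^\star_n \right\}
\end{equation*}
by stratifying $\mC$ according to the two quantities the algorithm loops over, namely $\delta := d_\mz(\bz, \bz^n) \in \{0, 1, \ldots, K\}$ and a sign $\mu \in \{\pm 1\}$. For each fixed $\delta$, the second term reduces to the constant $h_{ni}(\btheta^\star, \bxi^n_{-\mathbf{z}}; \delta)$, so it suffices to maximize $f_{ni} \circ g_{ni}$ over $\{\bz \in \mC : d_\mz(\bz, \bz^n) = \delta\}$. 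Since $g_{ni}$ is affine in $\bz$ with $\btheta^\star$ fixed, its image on this compact set is a closed interval, and the univariate convexity of $f_{ni}$ implies that its maximum over this interval is attained at an endpoint. Hence, for each $\delta$, it is enough to solve both $\max g_{ni}(\cdot)$ and $\min g_{ni}(\cdot)$ over the stratum; writing $g_{ni}(\btheta^\star, \bxi^n_{-\mathbf{z}}; \bz) = \bw^\top \bz + w_0$ unifies these into $\max \mu \bw^\top \bz$ subject to $\bz \in \mC$ and $d_\mz(\bz, \bz^n) = \delta$, one subproblem per $\mu \in \{\pm 1\}$.

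For fixed $\mu$ and $\delta$, I would argue that this inner subproblem is separable across the one-hot blocks: the objective splits as $\sum_{m \in [K]} \mu \bw_m^\top \bz_m$, the constraint ``exactly $\delta$ blocks differ from $\bz^n$'' is a cardinality constraint on the identities of the differing blocks, and each block lives independently in $\mC(k_m)$. The best achievable value on a differing block $m$ is attained at $\bz_m^\star \in \arg\max\{\mu \bw_m^\top \bz_m : \bz_m \in \mC(k_m) \setminus \{\bz_m^n\}\}$, which is precisely what the algorithm precomputes. The problem then reduces to selecting the $\delta$ blocks with the largest incremental gains $\mu \bw_m^\top (\bz_m^\star - \bz_m^n)$, and the permutation $\pi$ performs exactly this sort. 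Consequently, the candidate $\bz(\delta)$ constructed by the algorithm is a maximizer of the stratified subproblem, and since $\mathcal{Z}$ contains such a candidate for every $(\mu, \delta)$, the final linear scan returns a true optimizer over $\mC$.

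For the runtime analysis, I would first observe that the affine representation $(\bw, w_0)$ of $g_{ni}$ is determined by the structure of problem~\eqref{eq-most-vio-generic} and can be extracted in $O(M_\mz)$ time. For each $\mu \in \{\pm 1\}$, the per-block maximization enumerates the $k_m - 1$ nonzero one-hot vectors of $\mC(k_m)$ (together with the zero vector, if distinct from $\bz_m^n$) in $O(k_m)$ time, summing to $O(M_\mz + K)$ across $m \in [K]$. Sorting the gains costs $O(K \log K)$, and the $K+1$ candidate vectors $\bz(\delta)$ can be maintained incrementally from $\bz(0) = \bz^n$ by flipping one block at each increment of $\delta$, accumulating to $O(M_\mz)$ writes. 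The dominant term is the evaluation of $f_{ni}$, $g_{ni}$ and $h_{ni}$ at each of the $2(K+1)$ candidates, contributing $O(KT)$. Combined with a final $O(K)$ comparison, the aggregate runtime is $O(M_\mz + KT + K \log K)$, as claimed.

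The main obstacle is a clean formulation of the reduction from the implicit maximization over the exponential set $\mC$ to the explicit $O(K)$-sized candidate pool $\mathcal{Z}$. The two ingredients that make this work are the level-set decomposition by $\delta$—which freezes $h_{ni}$ and exposes the affine structure of $g_{ni}$—and the block separability of the resulting linear objective restricted to the discrete ``Hamming sphere'' $\{\bz \in \mC : d_\mz(\bz, \bz^n) = \delta\}$, which converts the outer combinatorial choice into a sort-and-select. Neither step is individually difficult, but applying the convex-maximum-at-endpoints argument level-set-by-level-set (rather than trying to relax to a continuous one-hot polytope) is essential for keeping the candidate set small and for preserving the $O(K \log K)$ bound.
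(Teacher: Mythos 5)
Your proof is correct and follows essentially the same route as the paper's: stratify by $\delta = d_\mz(\bz,\bz^n)$ to freeze $h_{ni}$, use the univariate convexity of $f_{ni}$ to reduce each stratum to maximizing and minimizing the affine $g_{ni}$, exploit the rectangularity of $\mC$ to decompose into per-block optima followed by a sort-and-select of the $\delta$ largest gains, and account for the runtime in the same way. (The only slight imprecision is that the image of the discrete stratum under $g_{ni}$ is a finite set rather than a closed interval, but the argument that a univariate convex function over a finite set of reals attains its maximum at the smallest or largest element goes through unchanged.)
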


\section{\color{black} Complexity Analysis}\label{sec-complexity}

{\color{black}
Despite its exponential size, our unified problem representation~\eqref{eq-most-vio-generic} admits a polynomial time solution for the classes of loss functions that we consider in this paper. In fact, it follows from Theorem~\ref{thm-violation} that Algorithm~\ref{alg-most-violated} provides a polynomial time separation oracle for problem~\eqref{eq-most-vio-generic}.

\begin{thm}[Complexity of the Unified Problem Representation~\eqref{eq-most-vio-generic}]\label{thm:complexity}
    Assume that the subgradients of the functions $f_{ni}$ and $-h_{ni}$ in problem~\eqref{eq-most-vio-generic} can be computed in polynomial time and that $\Theta$ is compact, full-dimensional and admits a polynomial time weak separation oracle. Then problem~\eqref{eq-most-vio-generic} can be solved to $\delta$-accuracy in polynomial time.
\end{thm}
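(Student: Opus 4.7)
The plan is to apply the ellipsoid method of Gr\"otschel, Lov\'asz and Schrijver for convex optimization over a convex set given by a polynomial time weak separation oracle, which produces a $\delta$-approximate optimizer in time polynomial in the input length and $\log(1/\delta)$. I first observe that problem~\eqref{eq-most-vio-generic} is convex: for every fixed $\bm{z} \in \mC$, the map $\btheta \mapsto f_{ni}(g_{ni}(\btheta, \bm{\xi}^n_{-\mathbf{z}}; \bm{z}))$ is convex as the composition of the convex function $f_{ni}$ with a function that is affine in $\btheta$, while $\btheta \mapsto -h_{ni}(\btheta, \bm{\xi}^n_{-\mathbf{z}}; d_\mz(\bm{z}, \bm{z}^n))$ is convex because $h_{ni}$ is concave in $\btheta$ for every fixed value of its last component. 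The slacks $\bsigma$ enter the constraints linearly, so the feasible region of~\eqref{eq-most-vio-generic} is convex; convexity of $f_0$ is given by hypothesis.

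The heart of the proof is to construct a polynomial time weak separation oracle for this feasible region. Given an incumbent $(\btheta^\star, \bsigma^\star)$, I would first invoke the hypothesized weak separation oracle for $\Theta$ and separately check $\sigma_n^\star \geq 0$ via trivial hyperplanes. Next, for each of the polynomially many constraint groups $(n, i) \in [N] \times \mathcal{I}$, I would call Algorithm~\ref{alg-most-violated} to identify a most violated index $\bm{z}(n, i) \in \mC$, which runs in polynomial time by Theorem~\ref{thm-violation}. Whenever the associated violation is strictly positive, a separating hyperplane for~\eqref{eq-most-vio-generic} is assembled in polynomial time from a subgradient of the convex function
\begin{equation*}
\btheta \mapsto f_{ni}(g_{ni}(\btheta, \bm{\xi}^n_{-\mathbf{z}}; \bm{z}(n, i))) - h_{ni}(\btheta, \bm{\xi}^n_{-\mathbf{z}}; d_\mz(\bm{z}(n, i), \bm{z}^n)),
\end{equation*}
using the hypothesized polynomial time subgradients of $f_{ni}$ and $-h_{ni}$ combined with the affine dependence of $g_{ni}$ on $\btheta$ via the chain rule.

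Since $\Theta$ is compact, full-dimensional and has polynomially bounded encoding length, and since $f_0$ is non-decreasing in $\bsigma$, the slack variables $\sigma_n$ can be truncated to an a priori polynomially bounded box (obtained from the maximum constraint violation attainable on the compact set $\Theta$) without affecting the optimal value. The ellipsoid method applied with the separation oracle above then delivers a $\delta$-optimal solution in polynomial time. The main obstacle is verifying the standard regularity conditions of the ellipsoid framework (polynomial bounds on the circumscribing and inscribed balls of the feasible region) after this truncation; this reduces to routine bookkeeping, as compactness and full-dimensionality of $\Theta$ are assumed outright and the genuinely new ingredient is the polynomial time constraint separation delivered by Algorithm~\ref{alg-most-violated}, which tames the exponential number of constraints indexed by $\mC$.
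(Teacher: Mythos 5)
Your proposal follows essentially the same route as the paper's proof: both invoke the Gr\"otschel--Lov\'asz--Schrijver ellipsoid framework, build a weak separation oracle for the feasible region by combining the non-negativity checks on $\bsigma$, the hypothesized oracle for $\Theta$, and Algorithm~\ref{alg-most-violated} together with subgradients of the constraint functions, and then bound $\bsigma$ via the maximum constraint violation over the compact set $\Theta$ to obtain a circumscribed convex body. The only difference is that the paper spells out the ``routine bookkeeping'' you defer (verifying that the feasible region is a full-dimensional circumscribed convex body in the sense of GLS), so your argument is correct and complete in all essentials.
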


Recall that an optimization problem is solved to $\delta$-accuracy if a $\delta$-suboptimal solution is identified that satisfies all constraints modulo a violation of at most $\delta$. The consideration of $\delta$-accurate solutions is standard in the numerical solution of nonlinear programs where an optimal solution may be irrational. We show in the Appendices~A and~B that the subgradients of $f_{ni}$ and $-h_{ni}$ can be computed in polynomial time and that $\Theta$ can be assumed to be compact in the Wasserstein classification and regression problems that we consider. The regularity assumptions imposed on problem~\eqref{eq-most-vio-generic} are crucial in this regard. In fact, it follows from Theorem~2 of \cite{selvi2023wasserstein-non-redacted} that if the functions $f_{ni}$ in problem~\eqref{eq-most-vio-generic} were allowed to be non-convex, then the problem would be strongly NP-hard even if $M_{\btheta} = 0$, $N = | \mathcal{I} | = 1$ and $h_{11}$ vanished.

While Theorem~\ref{thm:complexity} principally allows us to solve problem~\eqref{eq-most-vio-generic} in polynomial time, the available solution algorithms (see, \emph{e.g.}, \citealt{LSW15:ellipsoid}) rely on variants of the ellipsoid method and are not competitive on practical problem instances. For this reason, we rely on our cutting plane algorithm throughout our numerical experiments.

A by now classical result shows that when $K = 0$ (absence of discrete features) and $\mX = \mR^{M_\mx}$, the Wasserstein learning problem~\eqref{eq:the_mother_of_all_problems} reduces to a classical learning problem with an additional regularization term in the objective function whenever the output weight $\kappa_{\my}$ in Definition~\ref{def:wasserstein} approaches $\infty$ \citep{NIPS2015, JMLR:v20:17-633, blanchet_kang_murthy_2019, GCK22:wasserstein_dro_variation_reg}. It turns out that this reduction no longer holds when discrete features are present.

\begin{thm}[Absence of Regularizers]\label{thm-equivalency}
    Fix any convex Lipschitz continuous or piece-wise affine convex loss function $L$ that is not constant, any Wasserstein classification or regression instance (\emph{cf.}~Theorem~\ref{thm-convex-reformulation} as well as Appendices~A and~B) with any number $N \geq 1$ training samples, non-zero numbers $M_\mx$ of continuous and $K$ of discrete features, $\mX = \mR^{M_\mx}$, as well as any $\kappa_\mz > 0$ and any $p \geq 1$ in the definition of the Wasserstein ground metric. For sufficiently large radii $\epsilon$ of the ambiguity set, the objective function
    \begin{equation*}
        \sup_{\mQ \in \mathfrak{B}_\epsilon (\widehat{\mP}_N)} \; \mE_\mQ \left[ l_{\bbeta} (\bx, \bz, y) \right],
    \end{equation*}
    of the Wasserstein learning problem does not admit an equivalent reformulation as a classical regularized learning problem,
    \begin{equation*}
        \mathbb{E}_{\widehat{\mathbb{P}}_N} \left[ l_{\bm{\beta}} (\bm{x}, \bm{z}, y) \right] + \mathfrak{R} (\bm{\beta})
        \qquad
        \text{for any } \mathfrak{R} : \mathbb{R}^{1 + {M_\mx} + M_\mz} \rightarrow \mathbb{R},
    \end{equation*}
    even when the weight $\kappa_{\my}$ of the output distance $d_\mathrm{y}$ approaches $\infty$.
\end{thm}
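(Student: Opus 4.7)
My plan is to argue by contradiction. Suppose there existed a data-independent regularizer $\mathfrak{R}: \mR^{1 + M_\mx + M_\mz} \to \mR$ such that, for every admissible empirical distribution $\widehat{\mP}_N$ and every $\bbeta$,
\begin{equation*}
    \sup_{\mQ \in \mathfrak{B}_\epsilon (\widehat{\mP}_N)} \mE_\mQ [l_\bbeta (\bx,\bz,y)] \; = \; \mE_{\widehat{\mP}_N} [l_\bbeta (\bx,\bz,y)] + \mathfrak{R}(\bbeta).
\end{equation*}
Then the \emph{excess} of the worst-case loss over the empirical loss would depend on $\bbeta$ alone. I will produce two $N$-point empirical distributions and a single $\bbeta$ at which these excesses differ, contradicting data-independence.

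For the construction, since $L$ is not constant I first fix $\beta^\star \in \mR$ with $L(\beta^\star) \neq L(0)$; such a $\beta^\star$ exists because at most one value in the range of $L$ can coincide with $L(0)$. Next, I set $\bbeta_\mx = \mathbf{0}$, choose $\beta_0 = 0$ for classification or $\beta_0 = y^\circ$ for an arbitrary $y^\circ \in \mY$ in the regression case, and define $\bbeta_\mz$ to have its only nonzero entry equal to $\beta^\star$ at the position corresponding to the first one-hot coordinate of the first discrete feature. I then build two datasets $\widehat{\mP}_N$ and $\widehat{\mP}'_N$ that share the atoms $\bx^n = \mathbf{0}$ and $y^n \in \{1, y^\circ\}$ (as appropriate), and share identical arbitrary values of $\bz^n_m$ for every $m \geq 2$; they differ \emph{only} in the first one-hot block, which equals the basis vector $\bm{e}_1 \in \mR^{k_1 - 1}$ in $\widehat{\mP}_N$ and $\mathbf{0} \in \mR^{k_1 - 1}$ in $\widehat{\mP}'_N$. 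Under this design the loss at every atom collapses to $L\big(\beta^\star \cdot \indicate{\bz_1 = \bm{e}_1}\big)$, which realizes only the two values $L(\beta^\star)$ and $L(0)$.

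The excesses are then easy to compute. Immediately $\mE_{\widehat{\mP}_N}[l_\bbeta] = L(\beta^\star)$ while $\mE_{\widehat{\mP}'_N}[l_\bbeta] = L(0)$. For the worst-case terms, observe that $\bbeta_\mx = \mathbf{0}$ makes continuous perturbations irrelevant, $\kappa_\my = \infty$ freezes $y$, and only the first one-hot block of $\bz$ changes the loss, whose attainable values are $\{L(0), L(\beta^\star)\}$. A coupling that changes the first one-hot block of every atom incurs average Wasserstein cost $\kappa_\mz \cdot 1^{1/p} = \kappa_\mz$, so for any $\epsilon \geq \kappa_\mz$ the adversary can realize $\max\{L(0), L(\beta^\star)\}$ at every atom. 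Hence both suprema equal $\max\{L(0), L(\beta^\star)\}$, and the two excesses read $\max\{L(0), L(\beta^\star)\} - L(\beta^\star)$ and $\max\{L(0), L(\beta^\star)\} - L(0)$. Exactly one of these is zero and the other equals $|L(\beta^\star) - L(0)| > 0$, delivering the contradiction.

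The main subtlety, I expect, will be rigorously certifying that the two worst-case expectations really equal the per-atom maximum $\max\{L(0), L(\beta^\star)\}$. The upper bound is pointwise: on the support of any coupling the loss can only realize $L(0)$ or $L(\beta^\star)$, and thus cannot exceed their maximum; the matching lower bound is produced by the explicit flipping coupling, which has Wasserstein distance at most $\kappa_\mz$ and is therefore admissible once $\epsilon \geq \kappa_\mz$. Equivalently one can appeal to the dual reformulation in Observation~\ref{obs-derivation} and check that the optimal multiplier $\lambda$ vanishes for $\epsilon \geq \kappa_\mz$. Everything else is bookkeeping: the argument is insensitive to $k_1 \geq 2$, $p \geq 1$, $\kappa_\mz > 0$, and $N \geq 1$, and it applies uniformly to classification and regression as well as to both loss classes of Theorem~\ref{thm-convex-reformulation}, since non-constancy is the only property of $L$ that is used.
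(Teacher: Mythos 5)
Your proof is correct, but it takes a genuinely different route from the paper's. The paper fixes a model with $\hat{\bbeta}_\mx \neq \bm{0}$, passes to the dual reformulations of Propositions~\ref{prop-der-class-conv} and~\ref{prop-der-reg-conv}, pins down $\lambda^\star = \mathrm{lip}(L)\cdot\lVert\hat{\bbeta}_\mx\rVert_*$ for $\epsilon > \kappa_\mz K^{1/p}$, and then perturbs only the continuous feature vector $\bx^1$ of a single sample: a first-order convexity estimate at a point of differentiability (invoking Rademacher's theorem) makes the excess term strictly positive for one choice of $\hat{\bx}^1$, while a limiting argument sending $\hat{\bbeta}_\mx{}^\top\check{\bx}^1 \to -\infty$ forces it to zero for another. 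You instead set $\bbeta_\mx = \bm{0}$, which collapses the worst-case expectation to a maximum over the two attainable loss values $\{L(0), L(\beta^\star)\}$ that you can evaluate directly in the primal (pointwise upper bound plus an explicit flipping coupling of cost at most $\kappa_\mz$), and you vary the discrete features across the two datasets; this avoids the dual, Rademacher, and all limit arguments. Both proofs establish the same reading of the theorem, namely that no data-agnostic $\mathfrak{R}(\bbeta)$ can reproduce the gap between the worst-case and empirical objectives uniformly across datasets. What the paper's heavier construction buys is that the data-dependence of the excess is exhibited through the \emph{continuous} features alone (discrete features and outputs held fixed), which is why its hypothesis $M_\mx \neq 0$ is actually used; your argument never touches the continuous features and would go through verbatim for $M_\mx = 0$, so it proves a formally stronger statement by a more elementary means. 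Two small polish points: your justification for the existence of $\beta^\star$ with $L(\beta^\star) \neq L(0)$ ("at most one value in the range of $L$ can coincide with $L(0)$") is garbled — the correct one-line reason is simply that $L(e) = L(0)$ for all $e$ would make $L$ constant; and your computation is exact only at $\kappa_\my = \infty$, so for the "approaches $\infty$" phrasing you should add the (easy) observation that for large finite $\kappa_\my$ the two worst-case values converge to $\max\{L(0), L(\beta^\star)\}$ while the empirical losses are unchanged, so the two excesses still differ for all sufficiently large $\kappa_\my$.
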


We emphasize that Theorem~\ref{thm-equivalency} applies to \emph{any} loss function $L$ and \emph{any} reguarlizer $\mathfrak{R}$. We are not aware of any prior results of this form in the literature.}

{\color{black}
\section{Comparison with Continuous-Feature Formulations}\label{sec-comparison}

We next contrast the mixed-feature Wasserstein learning problem~\eqref{eq-dro-gen2} with an \emph{unbounded} continuous-feature formulation that replaces the support $\mathbb{Z}$ of the discrete features $\bm{z}$ with $\mathbb{R}^{M_\mz}$ (Section~\ref{subsec-comparison}), as well as with a \emph{bounded} continuous-feature formulation that replaces $\mathbb{Z}$ with its convex hull $\mathrm{conv} (\mathbb{Z})$ (Section~\ref{section:bounded-feature}). This section focuses on a qualitative comparison of the three formulations; a quantitative comparison in terms of runtimes and generalization errors on benchmark instances is relegated to our numerical experiments (Section~\ref{sec-numerics}).

\subsection{Comparison with Unbounded Continuous-Feature Formulation}\label{subsec-comparison}

Our reformulation~\eqref{eq-dro-gen2} of the mixed-feature Wasserstein learning problem scales exponentially in the discrete features. It may thus be tempting to replace the support $\mathbb{Z}$ with $\mathbb{R}^{M_\mz}$, which would allow us to solve problem~\eqref{eq-dro-gen2} in polynomial time using the reformulations proposed by \cite{NIPS2015} and \cite{JMLR:v20:17-633}. We next present a stylized example to illustrate that disregarding the discrete nature of the features $\bm{z}$ can result in pathological worst-case distributions that lack relevance to the problem.

Consider a classification problem where the single feature is binary and follows the Bernoulli distribution $z \sim \frac{1}{2}\delta_{-1} + \frac{1}{2}\delta_{1}$, and where the output variable $y \in \{ -1, +1\}$ is related to the feature $z$ via $\mathrm{Prob}(y = z \mid z) = 0.8$. Note that in slight deviation from our earlier convention, the binary feature is supported on $\{-1, +1\}$ instead of $\{0,1\}$; this allows us to present the example without the use of an intercept $\beta_0$, which in turn will simplify our exposition. We set the label mismatch cost to $\kappa_\mathrm{y} = 1$, and we use a non-smooth Hinge loss function. We generate random datasets comprising $N = 10$ samples, and we compare the following three formulations:
\begin{enumerate}
    \item[\emph{(i)}] \emph{Empirical risk model.} This model replaces the worst-case expectation in the Wasserstein learning problem~\eqref{eq:the_mother_of_all_problems} with an expectation over the empirical distribution $\widehat{\mP}_N$.
    \item[\emph{(ii)}] \emph{Mixed-feature Wasserstein model.} We solve the Wasserstein learning problem~\eqref{eq-dro-gen2} with the discrete support $\mathbb{Z} = \{ -1, +1 \}$.
    \item[\emph{(iii)}] \emph{Unbounded continuous-feature Wasserstein model.} We solve the Wasserstein learning problem~\eqref{eq-dro-gen2} with the unbounded continuous support $\mathbb{Z} = \mathbb{R}$.
\end{enumerate}
For the fixed hypothesis $\beta_\mz = 1$ and across 10,000 randomly generated datasets, the mean expected Hinge loss of the empirical risk model  evaluates to 0.41. The worst-case expected Hinge loss of the mixed-feature Wasserstein learning problem varies from 0.41 to 2; the largest worst-case expected loss is attained for large Wasserstein radii $\epsilon$, where the worst-case distributions approach {any mixture of $\delta_{(+1, -1)}$ and $\delta_{(-1, +1)}$}. The worst-case expected Hinge loss of the unbounded continuous-feature Wasserstein learning problem, on the other hand, diverges as the Wasserstein radii $\epsilon$ increase. In fact, for sufficiently large $\epsilon$ the worst-case distributions place a probability mass of $0.1$ on a non-sensical atom $(\pm c, \pm1)$, where $c$ increases with $\epsilon$.

\begin{figure}[tb]
    \centering
    \includegraphics[width=.75\textwidth]{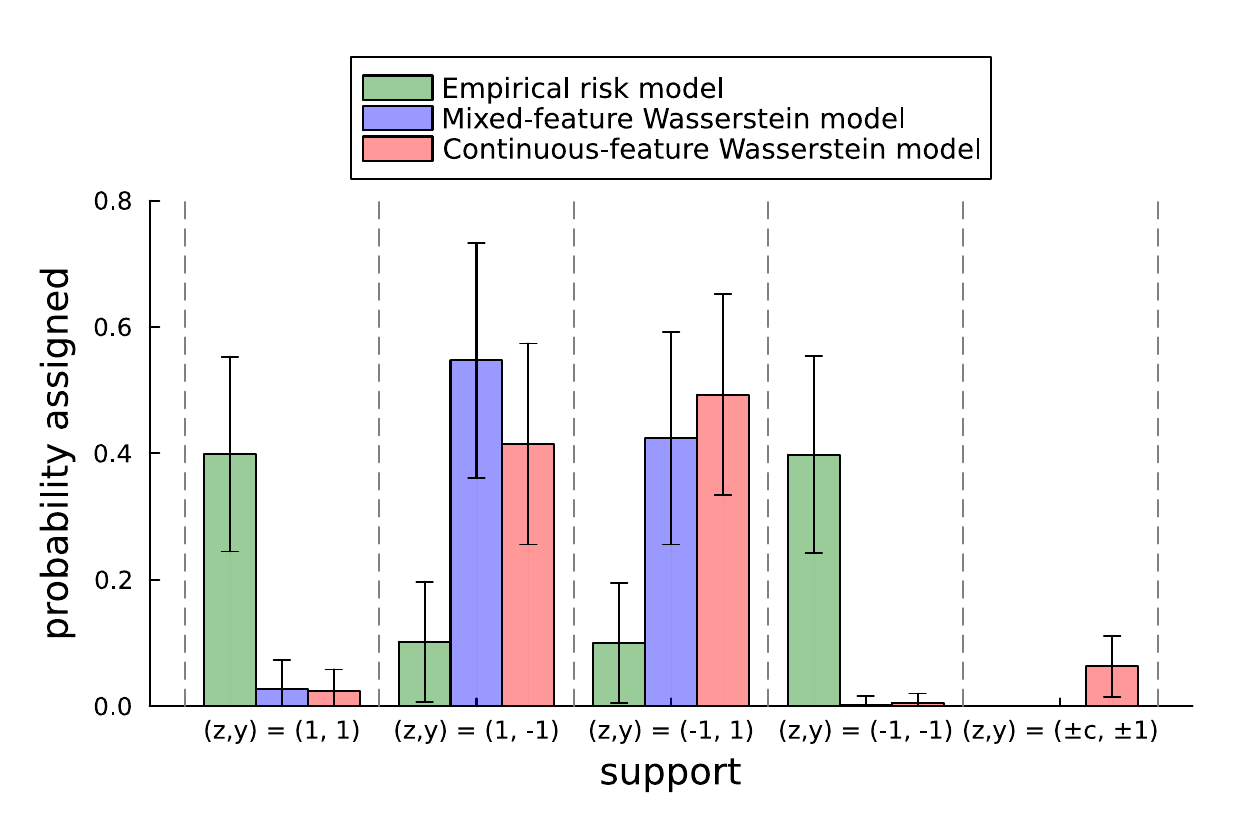} 
    \caption{\color{black} (Worst-case) distributions for our three formulations. The bars represent the mean probabilities, and the whiskers indicate the corresponding standard deviations, computed over 10,000 statistically independent simulations. \label{figure:toy1}}
\end{figure}

Figure~\ref{figure:toy1} illustrates the (worst-case) distributions of the three formulations for an intermediate Wasserstein radius of $\epsilon = 0.85$. As can be seen from the figure, the bounded continuous-feature Wasserstein learning problem places a mean probability mass of 0.063 (std.~dev.~0.048) on an atom $(\pm c, \pm 1)$, where $c$ is instance-specific and where $\lvert c \rvert$ lies in $[1.5, 7.5]$ (mean $2.29$).

One might argue that an illogical worst-case distribution is less concerning in practical applications, where the primary role of Wasserstein learning may be viewed as providing a regularizing effect through the worst-case formulation. However, our numerical results will demonstrate that the unbounded continuous-feature formulation is consistently outperformed by our mixed-feature formulation in terms of generalization error, even when the Wasserstein radii $\epsilon$ are chosen via cross-validation.}

{\color{black}
\subsection{Equivalence to Bounded Continuous-Feature Formulation}\label{section:bounded-feature}

It is natural to ask whether the mixed-feature Wasserstein learning problem~\eqref{eq:the_mother_of_all_problems} is equivalent to its \emph{bounded} continuous-feature counterpart that replaces the support $\mathbb{Z}$ of the discrete features with its convex hull, $\mathrm{conv} (\mathbb{Z})$. We show in the following that this is indeed the case, but we argue that this equivalence bears little computational consequence.

To construct the bounded continuous-feature model, fix any $s \geq 2$ and define $\mathbb{U} (s)$ as the convex hull of the one-hot discrete feature encoding $\mathbb{Z} (s)$. One readily observes that
\begin{equation*}
    \mathbb{U} (s)
    \;\; = \;\;
    \left\{\bm{u} \in [0,1]^{s - 1} : \sum_{i \in [s - 1]} u_i \leq 1 \right\}.
\end{equation*}
Indeed, we have $\mathbb{Z} (s) \subseteq \mathbb{U} (s)$ by construction. To see that $\mathbb{U} (s) \subseteq \mathrm{conv} (\mathbb{Z} (s))$, on the other hand, we note that any $\bm{u} \in \mathbb{U} (s)$ can be represented as $\bm{u} = \lambda_0 \cdot \bm{0} + \sum_{i \in [s-1]} \lambda_i \cdot \mathbf{e}_i$, where $\bm{0} \in \mathbb{Z} (s)$ is the vector of all zeros and where $\mathbf{e}_i \in \mathbb{Z} (s)$ is the $i$-th canonic basis vector in $\mathbb{R}^{s-1}$, with $\lambda_i = u_i$ and $\lambda_0 = 1 - \sum_{i \in [s-1]} \lambda_i$. Defining $\mathbb{U}$ as the convex hull of $\mathbb{Z}$, we observe that
\begin{equation*}
    \mathbb{U}
    \;\; = \;\;
    \mathrm{conv}(\mathbb{Z})
    \;\; = \;\;
    \bigtimes_{m \in [K]} \mathrm{conv} (\mathbb{Z}(k_m))
    \;\; = \;\;
    \bigtimes_{m \in [K]} \mathrm{conv} (\mathbb{U}(k_m)),
\end{equation*}
where the second identity follows from Exercise~1.19 of \cite{bertsekas2009convex}. Since the ground metric $d_\mz$ from Definition~\ref{def:wasserstein} is only suitable for pairs of discrete feature vectors, we replace it with the ground metric
\begin{equation*}
    d_\mathrm{u} (\bm{u}, \bm{u}') = \left(\sum_{m\in[K]} \frac{1}{2}\lVert \bm{u}_m - \bm{u}'_m \rVert_1 + \frac{1}{2} \lvert \mathbf{1}^\top(\bm{u}_m - \bm{u}'_m)\rvert \right)^{1/p}
\end{equation*}
that extends to pairs of continuous feature vectors. One readily verifies that $d_\mathrm{u}$ is a metric, that is, it satisfies the identity of indiscernibles, positivity, symmetry and the triangle inequality. The proof of Proposition~\ref{prop:concave} below will also show that $d_\mathrm{z}$ and $d_\mathrm{u}$ coincide over all pairs of discrete features. We finally define the bounded continuous-feature model as
\begin{equation}\tag{\ref*{eq-dro-gen2}'}\label{eq-dro-gen2-v2}
        \begin{array}{l@{\quad}l}
        \displaystyle \mathop{\mathrm{minimize}}_{\bbeta, \lambda, \bs} & \displaystyle \lambda \epsilon + \frac{1}{N} \sum_{n \in [N]} s_n \\[5mm]
        \displaystyle \mathrm{subject\;to} & \displaystyle \sup_{(\bx, y) \in \mX \times \mY} \left\{ l_{\bbeta} (\bx, \bm{u}, y) - \lambda \lVert \bx - \bx^n \rVert  - \lambda \kappa_\my d_\my (y, y^n) \right\} - \lambda \kappa_\mz d_\mathrm{u} (\bm{u}, \bz^n) \leq s_n \\[-2mm]
        & \displaystyle \mspace{415mu} \forall n \in [N], \; \forall \bm{u} \in \mathbb{U} \\[2mm]
        & \displaystyle \bbeta = (\beta_0, \bbeta_\mx, \bbeta_\mz) \in \mR^{1 + M_\mx + M_\mz}, \;\; \lambda \in \mathbb{R}_+, \;\; \bs \in \mR^N_+,
        \end{array}
\end{equation}
where $\bm{u}$ and $\mathbb{U}$ serve as the continuous representations of the discrete features $\bm{z}$ and their support $\mathbb{Z}$, respectively. Note that in this formulation, the features $\bm{u} \in \mathbb{U}$ can be readily absorbed in the continuous feature set $\bm{x} \in \mathbb{X}$; we keep the distinction for improved clarity.

We next state the main result of this section.

\begin{prop}\label{prop:concave}
    The mixed-feature Wasserstein learning problem~\eqref{eq-dro-gen2} and its corresponding bounded continuous-feature counterpart~\eqref{eq-dro-gen2-v2} share the same optimal value and the same set of optimal solutions.
\end{prop}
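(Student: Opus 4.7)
The plan is to prove that problems~\eqref{eq-dro-gen2} and~\eqref{eq-dro-gen2-v2} share the same feasible set, from which the equivalence of optimal values and of optimal solution sets follows at once (the objective functions are already identical). The easy direction is immediate: $\mathbb{Z} \subseteq \mathbb{U}$ by construction, and a short per-block case analysis shows that $d_\mathrm{u}$ reduces to $d_\mz$ on $\mathbb{Z} \times \mathbb{Z}$, so any $(\bbeta, \lambda, \bs)$ feasible in~\eqref{eq-dro-gen2-v2} is also feasible in~\eqref{eq-dro-gen2}.

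For the reverse direction, fix a feasible $(\bbeta, \lambda, \bs)$ in~\eqref{eq-dro-gen2} and, for every $n \in [N]$, define the constraint slack surrogate
$$H_n(\bm{u}) \;=\; \sup_{(\bx, y) \in \mX \times \mY} \bigl\{\, l_\bbeta(\bx, \bm{u}, y) - \lambda \lVert \bx - \bx^n \rVert - \lambda \kappa_\my d_\my(y, y^n) \,\bigr\} \;-\; \lambda \kappa_\mz d_\mathrm{u}(\bm{u}, \bz^n).$$
I will establish (a) $H_n$ is convex on the compact convex set $\mathbb{U}$, and (b) $\mathrm{ext}(\mathbb{U}) \subseteq \mathbb{Z}$. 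Claim (b) is routine: each factor $\mathbb{U}(k_m)$ is a standard simplex with vertex set $\mathbb{Z}(k_m)$, and extreme points of a product of compact convex sets are products of extreme points. Given (a) and (b), Bauer's maximum principle yields
$$\sup_{\bm{u} \in \mathbb{U}} H_n(\bm{u}) \;=\; \sup_{\bm{u} \in \mathrm{ext}(\mathbb{U})} H_n(\bm{u}) \;\leq\; \sup_{\bz \in \mathbb{Z}} H_n(\bz) \;\leq\; s_n,$$
so $(\bbeta, \lambda, \bs)$ is feasible in~\eqref{eq-dro-gen2-v2} and the proof is complete.

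Convexity of the inner supremum in $H_n$ is standard: for both classification and regression, $l_\bbeta(\bx, \cdot, y)$ is a composition of the convex loss $L$ with an affine function of $\bm{u}$, so the bracketed objective is convex in $\bm{u}$ for each fixed $(\bx, y)$, and pointwise suprema preserve convexity. The subtle step---and the main obstacle---is showing that $-\lambda \kappa_\mz d_\mathrm{u}(\cdot, \bz^n)$ is convex in $\bm{u}$, i.e., that $d_\mathrm{u}(\cdot, \bz^n)$ is \emph{concave} on $\mathbb{U}$, a property that arbitrary metrics do not enjoy. The key observation is that the one-hot structure of the reference $\bz^n \in \mathbb{Z}$ forces each block contribution $\tfrac{1}{2}\lVert \bm{u}_m - \bz^n_m \rVert_1 + \tfrac{1}{2}\lvert \mathbf{1}^\top (\bm{u}_m - \bz^n_m) \rvert$ to collapse, via the sign constraints $\bm{u}_m \geq \bm{0}$ and $\mathbf{1}^\top \bm{u}_m \leq 1$ of $\mathbb{U}(k_m)$, to the affine expression $\mathbf{1}^\top \bm{u}_m$ (when $\bz^n_m = \bm{0}$) or $1 - u_{mi}$ (when $\bz^n_m = \mathbf{e}_i$). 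Hence $d_\mathrm{u}(\bm{u}, \bz^n)^p$ is a nonnegative affine function of $\bm{u}$ on $\mathbb{U}$, and composing with the concave, non-decreasing map $t \mapsto t^{1/p}$ delivers the required concavity of $d_\mathrm{u}(\cdot, \bz^n)$; without this structural collapse the argument would stall, since $d_\mathrm{u}$ is only convex in general.
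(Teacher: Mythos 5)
Your proposal is correct and follows essentially the same route as the paper's own proof: both establish the easy inclusion by checking that $d_\mathrm{u}$ restricts to $d_\mz$ on $\mathbb{Z}\times\mathbb{Z}$, both obtain concavity of $\bm{u}\mapsto d_\mathrm{u}(\bm{u},\bz^n)$ by collapsing each block to an affine expression (using the one-hot reference and the sign constraints of $\mathbb{U}(k_m)$) and composing with the concave non-decreasing map $t\mapsto t^{1/p}$, and both conclude via the observation that the resulting convex constraint function attains its supremum over $\mathbb{U}$ at an extreme point, with $\mathrm{ext}(\mathbb{U})=\mathbb{Z}$. The only cosmetic difference is your explicit appeal to Bauer's maximum principle where the paper states the extreme-point property directly.
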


The proof of Proposition~\ref{prop:concave} crucially relies on the concavity of the family of mappings $\bm{u} \mapsto d_\mathrm{u}(\bm{u}, \bm{u}')$, $\bm{u}' \in \mathbb{Z}$. Indeed, the equivalence stated in the proposition would \emph{not} hold if we were to replace $d_\mathrm{u}$ with $d'_\mathrm{u}(\bm{u}, \bm{u}') = \left(\sum_{m\in[K]} \lVert \bm{u}_m - \bm{u}'_m \rVert_\infty \right)^{1/p}$, despite $d'_\mathrm{u}$ being a metric on $\mathbb{U}$.

Unfortunately, the equivalence of Proposition~\ref{prop:concave} does not lead to more efficient solution schemes for the mixed-feature Wasserstein learning problem~\eqref{eq-dro-gen2}. Indeed, we are not aware of any finite-dimensional convex reformulations of the bounded continuous-feature counterpart~\eqref{eq-dro-gen2-v2} when the loss function $L$ is convex and Lipschitz continuous. When $L$ is piece-wise affine and convex, on the other hand, problem~\eqref{eq-dro-gen2-v2} indeed admits a finite-dimensional convex reformulation of polynomial size. We will see in our numerical experiments, however, that our cutting plane scheme from Section~\ref{sec-cutting} is much faster than the monolithic solution of this reformulation. This is due to the fact that the reformulation comprises a large number of decision variables as well as a constraint set that lacks desirable sparsity patterns.}

\section{Numerical Results}\label{sec-numerics}

We compare empirically the performance of our mixed-feature Wasserstein learning problem~\eqref{eq:the_mother_of_all_problems} with classical and regularized learning methods as well as continuous-feature {\color{black} formulations of} problem~\eqref{eq:the_mother_of_all_problems}. To this end, Section~\ref{sec-numerics:features} compares the out-of-sample losses of our mixed-feature Wasserstein learning problem~\eqref{eq:the_mother_of_all_problems} with those of the {\color{black} unbounded} continuous-feature approximation when the Wasserstein radius is selected via cross-validation. {\color{black} Section~\ref{sec-numerics:new} compares the runtime of our cutting plane solution scheme for the mixed-feature Wasserstein learning problem with that of solving the equivalent {\color{black} bounded} continuous-feature formulation monolithically.} Finally, Section~\ref{sec-numerics:uci} compares the out-of-sample performance of our formulation~\eqref{eq:the_mother_of_all_problems} with that of alternative methods on standard benchmark instances.

All algorithms were implemented in Julia v1.9.2 using the JuMP package~{\color{black} \citep{Lubin2023}} and MOSEK v10.0, and all experiments were run on Intel Xeon 2.66GHz cluster nodes with $8$GB memory in single-core and single-thread mode (unless otherwise specified). All implementations, datasets and experimental results are available on the \mbox{GitHub repository accompanying this work.}

\subsection{Comparison with {\color{black} Unbounded} Continuous-Feature Formulation}\label{sec-numerics:features}

Section~\ref{subsec-comparison} demonstrated that modeling discrete features in the Wasserstein learning  problem~\eqref{eq:the_mother_of_all_problems} as continuous and subsequently solving {\color{black} an unbounded} continuous-feature formulation may inadvertently hedge against pathological worst-case distributions.

\begin{figure}[tb]
    \centering
    \includegraphics[scale = 0.35]{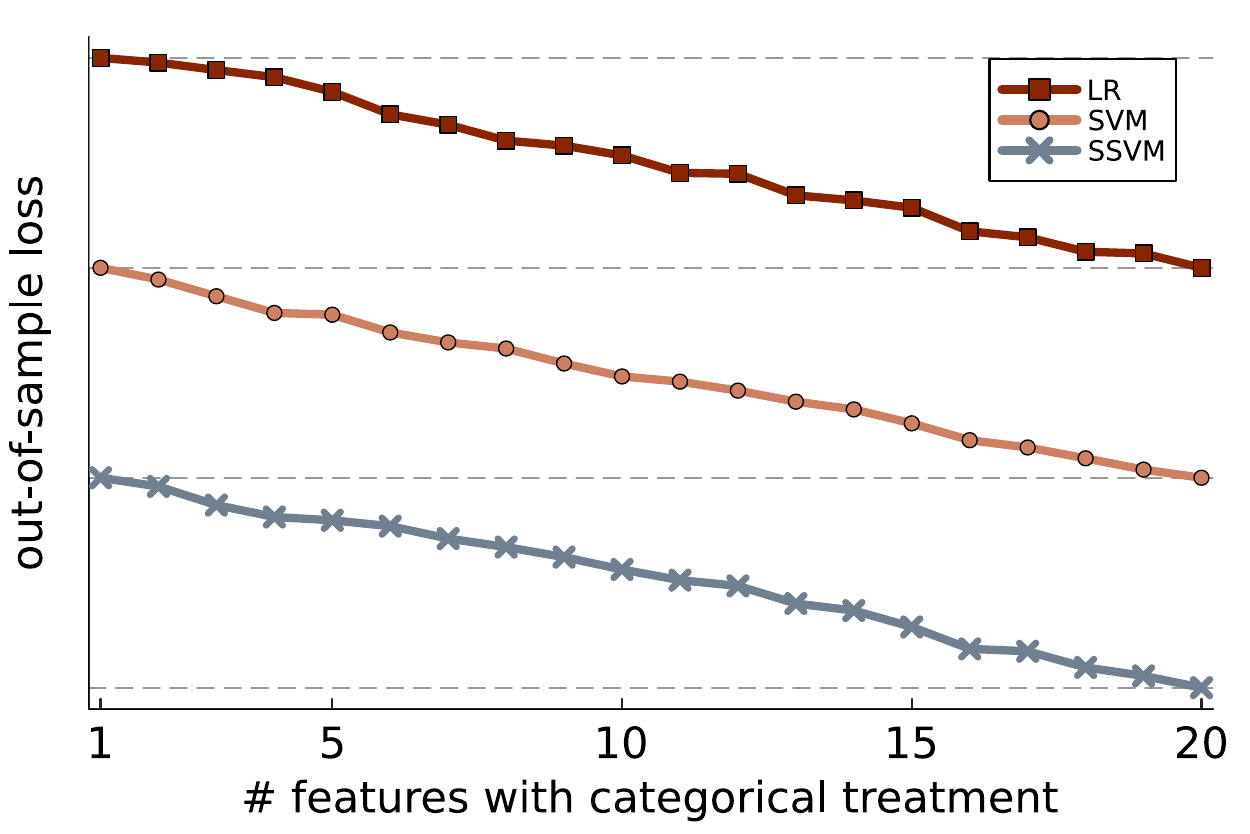} \quad
    \includegraphics[scale = 0.35]{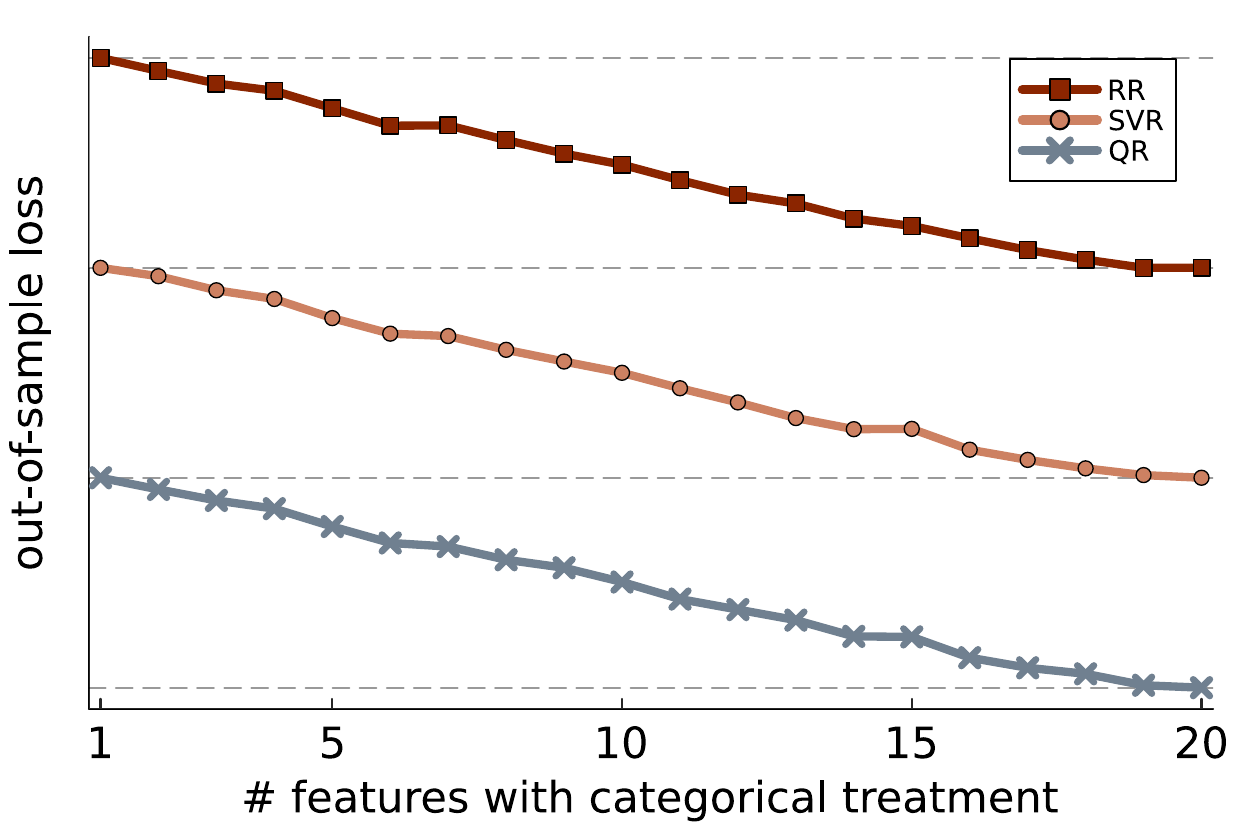}
    \caption{Mean out-of-sample losses for various classification (left) and regression (right) tasks when the number of discrete features that are treated as such varies. All losses are scaled to $[0,1]$ and shifted so that the curves do not overlap.}
    \label{fig:treatment}
\end{figure}

We now explore whether the findings from Section~\ref{subsec-comparison} {\color{black} have implications on the out-of-sample performance of the mixed-feature and unbounded continuous-feature formulations} when the Wasserstein radius $\epsilon$ is selected via cross-validation. We generate synthetic problem instances to be in full control of the problem dimensions. In particular, we generate $100$ synthetic instances for each of the $6$ loss functions considered {\color{black} in Appendices~A and~B: log-loss (LR), non-smooth Hinge Loss (SVM), smooth Hinge loss (SSVM), Huber loss (RR), $\tau$-insensitive loss (SVR) and pinball loss (QR)}, assuming that the loss functions explain a large part (but not all) of the variability in the data. All instances comprise $N = 20$ data points, no numerical features, and $K = 20$ binary features. The small number of data points ensures that distributional robustness is required to obtain small out-of-sample losses. For each instance, we solve a mixed-feature Wasserstein learning problem that treats some of the features as binary, whereas the other features are treated as {\color{black} continuous and unbounded}. In particular, the extreme cases of modeling all and none of the features as binary correspond to our mixed-feature Wasserstein learning problem~\eqref{eq:the_mother_of_all_problems} and its {\color{black} unbounded} continuous-feature approximation, respectively. We cross-validate the Wasserstein radius from the set $\epsilon \in \{ 0.005, 0.01, 0.015, \ldots, 0.1\}$, and we choose $(\kappa_{\mz}, \kappa_{\my}, p) = (1, 1, 1)$ as well as $\lVert \cdot \rVert = \lVert \cdot \rVert_1$ in our ground metric~\eqref{eq:ground_metric}. We refer to the GitHub repository for further details of the instance generation procedure. Figure~\ref{fig:treatment} reports the average out-of-sample losses across $100$ randomly generated problem instances for the different loss functions. The figure reveals an overall trend of improved results when the number of binary features that are treated as such increases. Qualitatively, we observe that this conclusion is robust to different choices of the $\epsilon$-grid used for cross-validation; we refer to the GitHub repository for further details.

{\color{black}
\subsection{Comparison with Bounded Continuous-Feature Formulation}\label{sec-numerics:new}

In Section~\ref{section:bounded-feature}, we showed that our mixed-feature Wasserstein learning problem~\eqref{eq:the_mother_of_all_problems} is equivalent to a bounded continuous-feature model that replaces the support $\mathbb{Z}$ of the discrete features with its convex hull $\mathrm{conv} (\mathbb{Z})$. In this section, we compare the runtimes of solving the mixed-feature formulation via our cutting plane method with those of solving the bounded continuous-feature formulation via an off-the-shelf solver.

When the loss function $L$ in the mixed-feature Wasserstein learning problem~\eqref{eq:the_mother_of_all_problems} is piece-wise affine and convex, the equivalent bounded continuous-feature model admits a reformulation as a polynomial-size convex optimization problem. However, solving this reformulation is computationally expensive since it involves $\mathcal{O}(N \cdot J \cdot K)$ decision variables, where $N$, $J$ and $K$ denote the numbers of data points, affine pieces in the loss function and discrete features, respectively. Moreover, the reformulation lacks desirable sparsity features. The consequences are illustrated in Table~\ref{tab:runtime_Aras_piece-wise}, which compares the monolithic solution of the bounded continuous-feature reformulation with the solution of the mixed-feature model using our cutting plane approach. The table reports median runtimes over $100$ datasets with $N \in \{500, \, 1{,}000, \, 2{,}500\}$ data points. All instances contain $30$ features, out of which $25\%, \ 50\%, \ 75\%$ are discrete with $4$ possible values, whereas the rest are continuous. We set the Wasserstein radius $\epsilon = 10^{-2}$ and use $(\kappa_\mathrm{z}, \kappa_\mathrm{y}, p) = (1, K, 1)$ for our ground metric. Further details of the implementation are available on the GitHub repository. We observe that our cutting plane scheme is significantly faster than the solution of the equivalent bounded continuous-feature model.

\begin{table}[!h]
    \centering
    \resizebox{0.8\columnwidth}{!}{%
    \begin{tabular}{l|ccc|ccc|ccc}
        \hline
        & & & & & & & & & \\[-4mm]
        & \multicolumn{3}{c|}{\textbf{\underline{N = 500}}} & \multicolumn{3}{c|}{\textbf{\underline{N = 1,000}}} & \multicolumn{3}{c}{\textbf{\underline{N = 2,500}}} \\
        \textbf{Method} & $\% 25$ & $\%50$ & $\%75$ & $\% 25$ & $\%50$ & $\%75$ & $\% 25$ & $\%50$ & $\%75$  \\
        \hline
        & & & & & & & & & \\[-2.8mm]
        SVM-cut & 0.05 & 0.20 & 0.25 & 0.13 & 0.21 & 0.45 & 0.29 & 0.61 & 1.23 \\
        {\color{black}SVM-piece}  & 6.35 & 11.80 & 15.25 & 12.48 & 22.99 & 40.97 & 30.4 & 109.78 & 146.62 \\[1mm] \hline
        & & & & & & & & & \\[-2.8mm]
        QR-cut & 0.15 & 0.24 & 0.36 & 0.28 & 0.57 & 1.18 & 1.08 & 1.64 & 3.08 \\
        {\color{black}QR-piece}  & 3.35 & 5.59 & 9.24 & 8.71 & 19.95 & 26.1 & 22.83 & 48.34 & 114.19 \\[1mm] \hline
        & & & & & & & & & \\[-2.8mm]
        SVR-cut & 0.12 & 0.22 & 0.24 & 0.26 & 0.44 & 0.55 & 0.76 & 0.92 & 2.73 \\
        {\color{black}SVR-piece} & 4.35 & 9.20 & 17.29 & 10.2 & 16.78 & 35.46 & 27.13 & 52.02 &  95.08\\[1mm] \hline
    \end{tabular}
    }
    \caption{\color{black} Median runtimes in secs to solve the mixed-feature model (-cut; using our cutting plane method) and the bounded continuous-feature model (-piece; solving a polynomial-size convex reformulation) for the hinge (SVM), pinball (QR; with $\tau = 0.5$) and $\tau$-insensitive (SVR; with $\tau = 10^{-2}$) loss functions.}
    \label{tab:runtime_Aras_piece-wise}
\end{table}

Although no polynomial-size convex reformulations are known for the equivalent bounded continuous-feature model when the loss function $L$ in the mixed-feature Wasserstein learning problem~\eqref{eq:the_mother_of_all_problems} is Lipschitz continuous, we can approximate such loss functions to any desired accuracy via piece-wise affine convex functions. To this end, we take the pointwise maximum of tangent lines at equally spaced breakpoints and subsequently solve the bounded continuous-feature formulation for piece-wise affine convex loss functions. This provides a lower bound on the original problem, whose value approaches the optimal value of the original problem as the number of affine pieces in the approximated loss function increases. Table~\ref{tab:rebuttal_piece} reports the median runtimes for $100$ datasets; we use the same problem parameters as in our previous experiment, and we report results for loss function approximations with $J \in \{5,10,25\}$ affine pieces. Similar to our previous experiments, the table shows that our cutting plane scheme is significantly faster than the solution of the equivalent bounded continuous-feature model. We note that the approximated loss functions would require more than $25$ pieces to guarantee a suboptimality of $0.1\%$ or less; details are relegated to the GitHub repository.

\begin{table}[!h]
    \centering
    \resizebox{1.\columnwidth}{!}{%
    \begin{tabular}{l|ccc|ccc|ccc}
        \hline
        & & & & & & & & & \\[-4mm]
        & \multicolumn{3}{c|}{\textbf{\underline{N = 500}}} & \multicolumn{3}{c|}{\textbf{\underline{N = 1,000}}} & \multicolumn{3}{c}{\textbf{\underline{N = 2,500}}} \\
        \textbf{Method} & $\% 25$ & $\%50$ & $\%75$ & $\% 25$ & $\%50$ & $\%75$ & $\% 25$ & $\%50$ & $\%75$  \\
        \hline
        & & & & & & & & & \\[-2.8mm]
        LR-cut & 0.16& 0.61& 1.32& 0.22& 0.60 & 1.48& 0.56& 0.67& 1.87 \\
        {\color{black}LR-piece [5]} & 42.08& 108.0& 157.06& 127.97& 382.6& 608.76& 743.01& 2,371.29*& \textbf{NaN} \\
        {\color{black}LR-piece [10]} & 50.32& 133.0& 186.56& 482.51& 236.94*& 339.82*& 3,107.19*& \textbf{NaN}& 2,595.02* \\
        {\color{black}LR-piece [25]} & 526.72& 280.11& 499.72& 2,178.79*& 1,634.96*& 1,137.18*& \textbf{NaN}& \textbf{NaN}& \textbf{NaN} \\[1mm] \hline
        & & & & & & & & & \\[-2.8mm]
        SSVM-cut & 0.14& 0.31& 0.54& 0.22& 0.46& 0.63& 0.6& 0.61& 1.57 \\
        {\color{black}SSVM-piece [5]} & 20.54& 59.6& 84.11& 44.43& 125.33& 201.43& 460.29& 349.74& 612.18* \\
        {\color{black}SSVM-piece [10]} & 34.85& 80.19& 114.63& 82.52& 166.36& 277.17*& 1,924.21*& 439.99*& 730.46*  \\
        {\color{black}SSVM-piece [25]} & 90.25& 198.32*& 332.86*& 221.74*& 406.68& 763.53*& \textbf{NaN}& 1,161.48*& 1,996.73* \\[1mm] \hline
        & & & & & & & & & \\[-2.8mm]
        RR-cut  & 0.16& 0.19& 0.3& 0.27& 0.3& 0.52& 0.7& 0.78& 1.31 \\
        {\color{black}RR-piece [5]} & 9.87& 23.23& 38.57& 23.13& 55.95& 87.75& 73.02& 171.16& 269.98  \\
        {\color{black}RR-piece [10]} & 16.52& 38.92& 60.38& 37.69& 87.57& 143.39& 105.4& 281.6& 464.49 \\
        {\color{black}RR-piece [25]} & 43.16& 95.53& 155.26& 93.21& 222.79& 373.44*& 296.18*& 824.93*& 1,584.85*\\[1mm] \hline
    \end{tabular}
    }
    \caption{\color{black} Median runtimes in secs to solve the mixed-feature model (-cut; using our cutting plane method) and the bounded continuous-feature model (-piece [$J$]; solving a polynomial-size convex reformulation) for the logistic (LR), smooth hinge (SSVM) and Huber (RR; with $\delta = 0.05$) loss functions. A \textbf{NaN} indicates that more than half of the instances could not be solved within an hour of runtime. An asterisk (*) indicates that at least one instance could not be solved within an hour of runtime.}
    \label{tab:rebuttal_piece}
\end{table}
}

\subsection{Performance on Benchmark Instances}\label{sec-numerics:uci}

In {\color{black} Section~\ref{sec-numerics:features}}, we observed that the mixed-feature Wasserstein learning problem~\eqref{eq:the_mother_of_all_problems} can outperform its {\color{black} unbounded} continuous-feature approximation in terms of out-of-sample losses. In practice, however, loss functions merely serve as surrogates for the misclassification rate (in classification problems) or mean squared error (in regression problems). Moreover, {\color{black} Section~\ref{sec-numerics:features}} considered synthetically generated problem instances, which {\color{black} may lack} some of the intricate structure of real-life datasets. This section therefore compares the out-of-sample misclassification rates and mean squared errors of problem~\eqref{eq:the_mother_of_all_problems} with those of alternative methods on standard benchmark instances. In particular, we selected 8 of the most popular classification and 7 of the most popular regression datasets from the UCI machine learning repository \citep{UCI}. We processed the datasets to \emph{(i)} handle missing values and inconsistencies in the data, \emph{(ii)} employ a one-hot encoding for discrete variables, and \emph{(iii)} convert the output variable (to a binary value for classification tasks and a $[-1, 1]$-interval for regression tasks). All datasets, processing scripts as well as further results on other UCI datasets can be found in the GitHub repository.

Tables~\ref{tab:logitic}--\ref{tab:quant} report averaged results over $100$ random splits into $80\%$ training set and $20\%$ test set for both the original datasets as well as parsimonious variants where only half of the data points are available. In the tables, the column groups report (from left to right) the problem instances' names and dimensions; the results for the unregularized implementations of the nominal problem as well as the mixed-feature Wasserstein learning problem~\eqref{eq:the_mother_of_all_problems} using $(\kappa_{\mz}, p) = (1, 1)$ and $\lVert \cdot \rVert = \lVert \cdot \rVert_1$ in our ground metric~\eqref{eq:ground_metric}; the results for the {\color{black}corresponding} $l_2$-regularized versions; and the results for the {\color{black} unbounded} continuous-feature approximations of problem~\eqref{eq:the_mother_of_all_problems}. For the unregularized mixed-feature and continuous-feature Wasserstein learning problems, we {jointly cross-validate the hyperparameters $(\epsilon, \kappa_{\my})$ where $\epsilon \in \{ 0, 10^{-5}, 10^{-3}, 10^{-1}\}$ and {\color{black}$\kappa_{\my} \in \{ 1, K, \infty \}$}. For the regularized nominal model, we cross-validate the regularization penalty $\alpha$ from the set $\{ 0 \} \cup \{c \cdot 10^{-p} : c \in \{1,5\}, \ p = 1, \ldots, 6\}$. For the regularized mixed-feature Wasserstein learning problem, finally, we cross-validate the hyperparameters {\color{black}$(\epsilon, \kappa_{\my}, \alpha)$ from the Cartesian product of the previous three sets}. The method with the smallest error in each column group is highlighted in bold, and the method with the smallest error across all column groups has a grey background. For instances where a version of the mixed-feature Wasserstein learning problem attains the smallest error across all column groups, the symbols $\dag$ and $\ddag$ indicate a statistically significant improvement of that model over the nominal and regularized nominal learning problem {\color{black}as well as over the continuous-feature approximation, respectively, at a $p$-value of 0.05}. Most nominal models were solved within seconds, most continuous-feature models were solved within minutes, and most of the mixed-feature models were solved within 10 minutes. We relegate the details of the statistical significance tests and runtimes to the GitHub repository.

Overall, we observe from the tables that the mixed-feature Wasserstein learning problems outperform both the (regularized) nominal problems and the {\color{black} unbounded} continuous-feature approximations in the classification and regression tasks. The outperformance of the mixed-feature Wasserstein learning problems over the continuous-feature approximations tends to be more substantial for problem instances with many discrete features, which further confirms our findings from Sections~\ref{subsec-comparison} and~\ref{sec-numerics:features}. Also, the mixed-feature Wasserstein learning problems tend to perform better on the parsimonious versions of the problem instances. This is intuitive as we expect robust optimization to be particularly effective in data sparse environments. Regularizing the mixed-feature Wasserstein learning problem does not yield significant advantages in the classification tasks, but it does help in the regression tasks. Finally, while we do not observe any significant  differences among the classification loss functions, the Huber loss function performs best on the regression problem instances.

{\color{black}
\section{Conclusions}\label{sec-conclusions}

Wasserstein learning offers a principled approach to mitigating overfitting in supervised learning. While Wasserstein learning has been studied extensively for datasets with exclusively continuous features, the existing solution techniques cannot handle discrete features faithfully without sacrificing performance in real-world applications.

In this paper, we propose a cutting plane algorithm for Wasserstein classification and regression problems that involve both continuous and discrete features. While the worst-case complexity of our method is exponential, it performs well on both synthetic and benchmark datasets. The core of our algorithm is an efficient identification of the most violated constraints with respect to the current solution. Despite the discrete nature of the problem, we show that it can be solved in polynomial time. We also study the complexity of mixed-feature Wasserstein learning more broadly, and we investigate its connections to regularized as well as different continuous-feature formulations.

Our work opens several promising directions for future research. Most notably, it would be instructive to explore whether similar algorithmic techniques can be developed for other supervised learning models, including decision trees, ensemble methods, and neural networks. Furthermore, it would be valuable to investigate the development of polynomial-time solution schemes that avoid reliance on the ellipsoid method and that overcome the scalability limitations of monolithic reformulations. For example, the fact that our algorithm for the mixed-feature problem relies on a sorting argument to identify the most violated constraint could potentially pave the way to developing robust counterparts using standard convex reformulations of sorting-type constraints. 

\paragraph{Acknowledgments}
The authors gratefully acknowledge the detailed and constructive feedback provided by the department editor, the anonymous associate editor, and the two anonymous referees, which has substantially improved the quality of this manuscript. We are especially indebted to the reviewers who suggested the possibility that our mixed-feature formulation might be equivalent to a bounded continuous-feature model, which directly inspired Section~5.2 of the manuscript. The authors are also indebted to Oscar Dowson for his invaluable invaluable guidance on efficient implementations of our Julia source codes.}

\clearpage
\begin{table}[tb]
{\color{black}
    \centering
    \resizebox{0.85\columnwidth}{!}{%
     \begin{tabular}{lcccc|c|cc|cc|c}
\toprule
Dataset &  $N$ & $M_\mx$ & $M_\mz$ & $K$ & Reduced? & Nom  &  MixF & r-Nom  & r-MixF & ConF \\
\midrule
     \multirow{2}*{\underline{balance-scale}} & \multirow{2}*{625} & \multirow{2}*{0} & \multirow{2}*{16} & \multirow{2}*{4} & \xmark & 0.56\% & \cg{\textbf{0.44\% \dag}}  & 0.52\% & 0.48\% & \cg{0.44\%}    \\
     & & & & &  \cmark & 1.65\% & \cg{\textbf{1.47\% \dag \ddag}} & 1.92\% & 1.77\% & 1.60\% \\
     \hline
     & & & & & & & & & & \\[-2.8mm]
     \multirow{2}*{\underline{breast-cancer}} & \multirow{2}*{277} & \multirow{2}*{0} & \multirow{2}*{42} & \multirow{2}*{9} & \xmark & 30.10\% & \cg{\textbf{28.46\% \dag \ddag}} & 29.46\% & \textbf{29.36\%} & 29.45\%  \\ 
     & & & & &  \cmark & 31.87\% & \textbf{29.88\%} & 29.58\% & \cg\textbf{28.31\% \dag \ddag} & 30.48\% \\
     \hline
     & & & & & & & & & & \\[-2.8mm]
     \multirow{2}*{\underline{credit-approval}} & \multirow{2}*{690} & \multirow{2}*{6} & \multirow{2}*{36} & \multirow{2}*{9} & \xmark & 13.59\% & \cg\textbf{13.12\% \dag \ddag} & 13.70\% & \textbf{13.55\%} & 13.88\% \\
     & & & & &  \cmark &  16.30\% & \textbf{14.94\%} & \cg\textbf{14.58\%} & 14.94\% & 15.11\% \\
     \hline
     & & & & & & & & & & \\[-2.8mm]
     \multirow{2}*{\underline{cylinder-bands}} & \multirow{2}*{539} & \multirow{2}*{19} & \multirow{2}*{43} & \multirow{2}*{14} & \xmark & 22.85\% & \textbf{22.71\%} & 22.90\% & \cg{\textbf{22.62\% \ddag}} & 23.13\% \\
     & & & & &  \cmark & 27.37\% & \textbf{26.36\%} & 27.18\% & \cg{\textbf{26.02\% \dag}} & 26.41\% \\
     \hline      
     & & & & & & & & & & \\[-2.8mm]
     \multirow{2}*{\underline{lymphography}} & \multirow{2}*{148} & \multirow{2}*{0} & \multirow{2}*{42} & \multirow{2}*{18} & \xmark & 17.24\% & \cg{\textbf{14.66\% \dag \ddag}} & \textbf{15.86\%} & 16.55\% &  17.41\% \\
     & & & & &  \cmark & 23.98\% & \cg\textbf{17.73\% \ddag} & 17.96\% & \textbf{18.07\%} &  19.14\% \\
     \hline      
     
     & & & & & & & & & & \\[-2.8mm]
     \multirow{2}*{\underline{primacy}} & \multirow{2}*{339} & \multirow{2}*{0} & \multirow{2}*{25} & \multirow{2}*{17} & \xmark &  \cg\textbf{13.51\%} & 13.96\% & 14.25\% & \textbf{13.58\%} & 14.33\% \\
     & & & & &  \cmark & 15.89\% & \textbf{14.70\%} & 14.70\% & \cg{\textbf{14.48\% \dag}} & 14.68\% \\
     \hline      
     & & & & & & & & & & \\[-2.8mm]
     \multirow{2}*{\underline{spect}} & \multirow{2}*{267} & \multirow{2}*{0} & \multirow{2}*{22} & \multirow{2}*{22} & \xmark & 20.28\% & \textbf{18.40\%} & 20.09\% & \textbf{18.30\%} & \cg{17.93\%} \\
     & & & & &  \cmark & 23.22\% & \textbf{19.44\%} & 23.34\% & \cg\textbf{19.13\% \dag \ddag} & 19.66\% \\
     \hline      
     & & & & & & & & & & \\[-2.8mm]
     \multirow{2}*{\underline{tic-tac-toe}} & \multirow{2}*{958} & \multirow{2}*{0} & \multirow{2}*{18} & \multirow{2}*{9} & \xmark & 1.94\% & \cg\textbf{1.70\%} & \cg\textbf{1.70\%} & \cg\textbf{1.70\%} & \cg\textbf{1.70\%} \\
     & & & & &  \cmark & 2.75\% & \cg\textbf{1.65\% \dag} & 1.81\% & \textbf{1.68\%} & \cg1.65\% \\
\bottomrule
\end{tabular}} \medskip
    \caption{\color{black}{Mean classification error for the \textbf{logistic regression loss function}. $N$, $M_\mx$, $M_\mz$ and $K$ refer to the problem parameters from Section~\ref{sec-wasser}. The column `Reduced?' indicates whether the full or sparse version of the dataset is being considered. Nom, MixF and ConF refer to the nominal problem formulation, the mixed-feature Wasserstein learning problem~\eqref{eq:the_mother_of_all_problems} and its unbounded continuous-feature approximation, respectively. The prefix `r-' indicates the use of an $l_2$-regularization.}}
    \label{tab:logitic} \vspace{-0.5cm}
}
\end{table}

\begin{table}[tb]
{\color{black}
    \centering
    \resizebox{0.85\columnwidth}{!}{%
     \begin{tabular}{lcccc|c|cc|cc|c}
\toprule
Dataset &  $N$ & $M_\mx$ & $M_\mz$ & $K$ & Reduced? & Nom  &  MixF & r-Nom  & r-MixF & ConF \\
\midrule
     \multirow{2}*{\underline{balance-scale}} & \multirow{2}*{625} & \multirow{2}*{0} & \multirow{2}*{16} & \multirow{2}*{4} & \xmark & \cg\textbf{0.44\%} & \cg{\textbf{0.44\% \ddag}} & 0.56\% & \textbf{0.48\%} & 0.56\%   \\
     & & & & &  \cmark & \textbf{1.91\%} & 1.93\% & \textbf{2.29\%} & 2.31\% & \cg{1.81\%}  \\
     \hline
     & & & & & & & & & & \\[-2.8mm]
     \multirow{2}*{\underline{breast-cancer}} & \multirow{2}*{277} & \multirow{2}*{0} & \multirow{2}*{42} & \multirow{2}*{9} & \xmark & 30.27\% & \textbf{29.27\%} & 30.46\% & \cg{\textbf{28.36\% \dag \ddag}} & 31.27\%   \\
     & & & & &  \cmark & 31.66\% & \textbf{30.24\%} & 31.05\% & \cg{\textbf{28.98\% \dag \ddag}} & 30.57\% \\
     \hline
     & & & & & & & & & & \\[-2.8mm]
     \multirow{2}*{\underline{credit-approval}} & \multirow{2}*{690} & \multirow{2}*{6} & \multirow{2}*{36} & \multirow{2}*{9} & \xmark & 13.84\% & \textbf{13.59\%} & \textbf{14.06\%} & \cg{\textbf{14.06\%}} & 14.20\% \\
     & & & & &  \cmark & 15.86\% & \textbf{15.74\%} & \cg{\textbf{14.84\%}} & 15.33\% &  15.15\% \\
     \hline
     & & & & & & & & & & \\[-2.8mm]
     \multirow{2}*{\underline{cylinder-bands}} & \multirow{2}*{539} & \multirow{2}*{19} & \multirow{2}*{43} & \multirow{2}*{14} & \xmark & 23.22\% & \textbf{22.71\%} & 23.08\% & \cg\textbf{22.48\% \dag \ddag} & 23.60\%  \\
     & & & & &  \cmark & 28.47\% & \cg\textbf{27.07\% \dag} & 28.55\% & 27.43\% & 27.10\%  \\
     \hline      
     & & & & & & & & & & \\[-2.8mm]
     \multirow{2}*{\underline{lymphography}} & \multirow{2}*{148} & \multirow{2}*{0} & \multirow{2}*{42} & \multirow{2}*{18} & \xmark & 19.14\% & \cg\textbf{17.41\% \dag \ddag} & 18.79\% & \textbf{17.93\%} & 18.79\% \\
     & & & & &  \cmark & \textbf{20.28\%} & 21.08\% & 19.49\% & \cg\textbf{18.24\% \dag \ddag} & 22.78\% \\
     \hline      
     & & & & & & & & & & \\[-2.8mm]
     \multirow{2}*{\underline{primacy}} & \multirow{2}*{339} & \multirow{2}*{0} & \multirow{2}*{25} & \multirow{2}*{17} & \xmark & 14.03\% & \textbf{13.81\%} & \cg{\textbf{13.36\%}} & 14.43\% & 13.96\%  \\
     & & & & &  \cmark & 15.96\% & \textbf{14.88\%} & \cg\textbf{14.53\%} & \cg\textbf{14.53\% \ddag} & 14.88\% \\
     \hline      
     & & & & & & & & & & \\[-2.8mm]
     \multirow{2}*{\underline{spect}} & \multirow{2}*{267} & \multirow{2}*{0} & \multirow{2}*{22} & \multirow{2}*{22} & \xmark & 20.19\% & \textbf{19.62\%} & 21.04\% & \cg\textbf{19.15\% \dag \ddag} & 20.76\% \\
     & & & & &  \cmark & 24.16\% & \textbf{21.13\%} & 22.03\% & \textbf{21.22\%} & \cg{21.10\%} \\
     \hline      
     & & & & & & & & & & \\[-2.8mm]
     \multirow{2}*{\underline{tic-tac-toe}} & \multirow{2}*{958} & \multirow{2}*{0} & \multirow{2}*{18} & \multirow{2}*{9} & \xmark & \cg\textbf{1.70\%} & \cg\textbf{1.70\%} & \cg\textbf{1.70\%} & \cg\textbf{1.70\%} & \cg1.70\% \\
     & & & & &  \cmark & 2.58\% & \cg\textbf{1.65\%} & \cg\textbf{1.65\%} & \cg\textbf{1.65\%} & \cg1.65\% \\
\bottomrule
\end{tabular}} \medskip
    \caption{\color{black}Mean classification error for the \textbf{hinge loss function}. We use the same abbreviations as in Table~\ref{tab:logitic}.}
    \label{tab:discrete_results_svm_reduced} \vspace{-0.5cm}
}
\end{table}

\begin{table}[tb]
{\color{black}
    \centering
    \resizebox{0.85\columnwidth}{!}{%
     \begin{tabular}{lcccc|c|cc|cc|c}
\toprule
Dataset &  $N$ & $M_\mx$ & $M_\mz$ & $K$ & Reduced? & Nom  &  MixF & r-Nom  & r-MixF & ConF \\
\midrule
     \multirow{2}*{\underline{balance-scale}} & \multirow{2}*{625} & \multirow{2}*{0} & \multirow{2}*{16} & \multirow{2}*{4} & \xmark & 1.36\% & \cg{\textbf{0.40\% \dag \ddag}} & 1.52\% & \textbf{0.48\%} & 0.52\% \\
     & & & & &  \cmark & 3.15\% & \textbf{1.89\%} & 2.07\% & \textbf{2.01\%} & \cg{\textbf{1.71\%}} \\
     \hline
     & & & & & & & & & & \\[-2.8mm]
     \multirow{2}*{\underline{breast-cancer}} & \multirow{2}*{277} & \multirow{2}*{0} & \multirow{2}*{42} & \multirow{2}*{9} & \xmark & 29.55\% & \cg{\textbf{29.09\% \dag \ddag}} & 30.18\% & \textbf{30.00\%} & 30.18\%  \\
     & & & & &  \cmark & 31.30\% & \textbf{30.00\%} & 28.55\% & \cg{\textbf{28.19\% \dag \ddag}} & 31.08\% \\
     \hline
     & & & & & & & & & & \\[-2.8mm]
     \multirow{2}*{\underline{credit-approval}} & \multirow{2}*{690} & \multirow{2}*{6} & \multirow{2}*{36} & \multirow{2}*{9} & \xmark &  13.44\% & \textbf{13.37\%} & 13.44\% & \cg{\textbf{13.33\% \ddag}} & 14.42\%  \\
     & & & & &  \cmark & 15.59\% & \textbf{15.31\%} & 14.65\% & \cg{\textbf{14.43\% \dag \ddag}} & 15.09\% \\
     \hline
     & & & & & & & & & & \\[-2.8mm]
     \multirow{2}*{\underline{cylinder-bands}} & \multirow{2}*{539} & \multirow{2}*{19} & \multirow{2}*{43} & \multirow{2}*{14} & \xmark & 23.27\% & \cg\textbf{22.06\% \dag \ddag} & 23.04\% & \textbf{22.66\%} & 22.62\%  \\
     & & & & &  \cmark &  27.43\% & \cg{\textbf{26.49\% \dag}} & 27.09\% & \textbf{26.75\%} & 26.73\% \\
     \hline      
     & & & & & & & & & & \\[-2.8mm]
     \multirow{2}*{\underline{lymphography}} & \multirow{2}*{148} & \multirow{2}*{0} & \multirow{2}*{42} & \multirow{2}*{18} & \xmark & 19.31\% & \textbf{16.90\%} & \cg{\textbf{16.38\%}} & \cg{\textbf{16.38\% \ddag}} & 17.76\% \\
     & & & & &  \cmark & 21.31\% & \textbf{20.11\%} & 18.47\% & \cg{\textbf{18.07\% \ddag}} & 20.17\% \\
     \hline      
     & & & & & & & & & & \\[-2.8mm]
     \multirow{2}*{\underline{primacy}} & \multirow{2}*{339} & \multirow{2}*{0} & \multirow{2}*{25} & \multirow{2}*{17} & \xmark &  13.51\% & \textbf{13.43\%} & \cg\textbf{13.13\%} & 13.66\% & 14.03\% \\
     & & & & &  \cmark & 15.10\% & \textbf{14.78\%} & 14.38\% & \cg\textbf{14.34\%} & 14.41\% \\
     \hline      
     & & & & & & & & & & \\[-2.8mm]
     \multirow{2}*{\underline{spect}} & \multirow{2}*{267} & \multirow{2}*{0} & \multirow{2}*{22} & \multirow{2}*{22} & \xmark & 20.28\% & \textbf{18.30\%} & 19.25\% & \cg\textbf{16.79\% \dag \ddag} & 18.11\% \\
     & & & & &  \cmark & 23.16\% & \cg\textbf{19.91\% \dag} & 23.16\% & \textbf{20.16\%} & 20.03\% \\
     \hline      
     & & & & & & & & & & \\[-2.8mm]
     \multirow{2}*{\underline{tic-tac-toe}} & \multirow{2}*{958} & \multirow{2}*{0} & \multirow{2}*{18} & \multirow{2}*{9} & \xmark & 1.73\% & \cg\textbf{1.70\%} & \cg\textbf{1.70\%} & \cg\textbf{1.70\%} & \cg1.70\% \\
      & & & & &  \cmark & 2.77\% & \cg\textbf{1.65\%} & \cg\textbf{1.65\%} & \cg\textbf{1.65\%} & \cg1.65\% \\
\bottomrule
\end{tabular}} \medskip
    \caption{\color{black}Mean classification error for the \textbf{smooth hinge loss function}. We use the same abbreviations as in Table~\ref{tab:logitic}.}
    \label{tab:ssvm} \vspace{-0.5cm}
}
\end{table}

\begin{table}[tb]
{\color{black}
    \centering
    \resizebox{0.85\columnwidth}{!}{%
     \begin{tabular}{lcccc|c|cc|cc|c}
\toprule
Dataset &  $N$ & $M_\mx$ & $M_\mz$ & $K$ & Reduced? & Nom  &  MixF & r-Nom  & r-MixF  & ConF \\
\midrule
     \multirow{2}*{\underline{bike}} & \multirow{2}*{17,379} & \multirow{2}*{4} & \multirow{2}*{31} & \multirow{2}*{5} & \xmark & \cg\textbf{210.23} & \cg\textbf{210.23} & \textbf{210.23} & \textbf{210.23} & 210.23 \\
     & & & & &  \cmark & 210.82 & \cg\textbf{210.81} & \textbf{210.84} & \textbf{210.84} & 213.47  \\
    \hline
     & & & & & & & &\\[-2.8mm]
     \multirow{2}*{\underline{fire}} & \multirow{2}*{517} & \multirow{2}*{10} & \multirow{2}*{31} & \multirow{2}*{4} & \xmark & \textbf{123.33} & \textbf{123.33} & 123.48 & \cg{\textbf{122.32 \dag}} & 122.44 \\
     & & & & &  \cmark & 132.71 &  \textbf{109.84} & 114.39 & \textbf{108.40} & \cg108.12 \\
    \hline
     & & & & & & & &\\[-2.8mm]
     \multirow{2}*{\underline{flare}} & \multirow{2}*{1,066} & \multirow{2}*{0} & \multirow{2}*{21} & \multirow{2}*{9} & \xmark & \textbf{198.46} & \textbf{198.46} & 198.96 & \textbf{198.46}  & \cg198.44 \\
     & & & & &  \cmark & 197.85 & \cg\textbf{195.65 \dag \ddag} & 198.56 & \textbf{197.15} & 198.91 \\
    \hline
     & & & & & & & &\\[-2.8mm]
     \multirow{2}*{\underline{garments}} & \multirow{2}*{1,197} & \multirow{2}*{7} & \multirow{2}*{22} & \multirow{2}*{4} & \xmark & \textbf{363.11} & 363.44 & \cg{\textbf{362.55}} & 362.55 & 363.36 \\
     & & & & &  \cmark & 890.81 & \textbf{367.95} & 368.15 & \cg{\textbf{367.12 \dag \ddag}} & 368.90 \\
     \hline
     & & & & & & & &\\[-2.8mm]
     \multirow{2}*{\underline{imports}} & \multirow{2}*{193} & \multirow{2}*{14} & \multirow{2}*{45} & \multirow{2}*{10} & \xmark & 442.71 & \textbf{288.57} & 311.08 & \cg\textbf{269.95 \dag \ddag} & 288.85 \\
     & & & & &  \cmark & 660.04 & \textbf{369.06} & 352.79 &  \cg{\textbf{335.30 \dag \ddag}} & 370.57 \\
     \hline
     & & & & & & & &\\[-2.8mm]
     \multirow{2}*{\underline{student}} & \multirow{2}*{395} & \multirow{2}*{13} & \multirow{2}*{26} & \multirow{2}*{17} & \xmark & \textbf{379.13} & 382.07 & 378.66 & \cg{\textbf{378.12 \dag \ddag}} &  383.78 \\
     & & & & &  \cmark & \textbf{411.98} & 414.29 & \cg\textbf{384.20} & 384.46 & 399.94 \\
    \hline
     & & & & & & & &\\[-2.8mm]
     \multirow{2}*{\underline{vegas}} & \multirow{2}*{504} & \multirow{2}*{5} & \multirow{2}*{106} & \multirow{2}*{14} & \xmark & 232,330.88 & \textbf{506.61} & \cg{\textbf{479.91}} & \cg{\textbf{479.91 \ddag}} & 482.06 \\
     & & & & &  \cmark & 287,615.23 & \textbf{515.22} & \cg{\textbf{493.60}} & 496.51 & 498.68 \\
\bottomrule
\end{tabular}} \medskip
    \caption{\color{black}Mean squared errors for the \textbf{Huber loss function} (with $\delta = 0.5$). We use the same abbreviations as in Table~\ref{tab:logitic}.}
    \label{tab:huber} \vspace{-0.5cm}
}
\end{table}

\begin{table}[tb]
{\color{black}
    \centering
    \resizebox{0.85\columnwidth}{!}{%
     \begin{tabular}{lcccc|c|cc|cc|c}
    \toprule
Dataset &  $N$ & $M_\mx$ & $M_\mz$ & $K$ & Reduced? & Nom  &  MixF & r-Nom  & r-MixF  & ConF \\
\midrule
     \multirow{2}*{\underline{bike}} & \multirow{2}*{17,379} & \multirow{2}*{4} & \multirow{2}*{31} & \multirow{2}*{5} & \xmark & 217.87 & \textbf{217.86} & 217.87 & \cg\textbf{217.83} & 217.86 \\
     & & & & &  \cmark & 218.17 & \textbf{218.15} & 218.17 & \cg\textbf{218.12} & 218.19 \\
    \hline
     & & & & & & & &\\[-2.8mm]
     \multirow{2}*{\underline{fire}} & \multirow{2}*{517} & \multirow{2}*{10} & \multirow{2}*{31} & \multirow{2}*{4} & \xmark & 124.36 & \cg\textbf{124.35} & 130.45 & \textbf{124.46} & 124.36 \\
     & & & & &  \cmark & 120.54 & \cg\textbf{109.80} & 116.67 & \textbf{109.85} & 109.81 \\
    \hline
     & & & & & & & &\\[-2.8mm]
     \multirow{2}*{\underline{flare}} & \multirow{2}*{1,066} & \multirow{2}*{0} & \multirow{2}*{21} & \multirow{2}*{9} & \xmark & \textbf{218.56} & 218.67 & 218.27 & \cg\textbf{204.93 \dag \ddag} & 218.56 \\
     & & & & & \cmark & 2,230.29 & \textbf{484.92} & 215.22 & \cg\textbf{203.17 \dag \ddag} & 216.09 \\
    \hline
     & & & & & & & &\\[-2.8mm]
     \multirow{2}*{\underline{garments}} & \multirow{2}*{1,197} & \multirow{2}*{7} & \multirow{2}*{22} & \multirow{2}*{4} & \xmark & 374.08 & \textbf{373.93} & 373.32 & \cg\textbf{372.87 \dag \ddag} & 376.05\\
     & & & & &  \cmark & 894.07 & \cg{\textbf{376.22 \dag}} & 377.78 & \textbf{376.44} &  376.92 \\
     \hline
     & & & & & & & &\\[-2.8mm]
     \multirow{2}*{\underline{imports}} & \multirow{2}*{193} & \multirow{2}*{14} & \multirow{2}*{45} & \multirow{2}*{10} & \xmark & 671.67 & \textbf{303.77} &351.66 & \cg\textbf{283.95 \dag \ddag} & 314.84 \\
     & & & & &  \cmark & 726.46 & \textbf{386.98} & 371.56 & \cg{\textbf{350.52 \dag \ddag}} & 394.28 \\
     \hline
     & & & & & & & &\\[-2.8mm]
     \multirow{2}*{\underline{student}} & \multirow{2}*{395} & \multirow{2}*{13} & \multirow{2}*{26} & \multirow{2}*{17} & \xmark & 399.65 & \textbf{396.69} & 387.07 & \cg{\textbf{386.44 \ddag}} & 404.71 \\
     & & & & &  \cmark & 451.85 & \textbf{397.77} & 403.51 & \cg{\textbf{393.42 \dag \ddag}} & 414.37 \\ 
    \hline
     & & & & & & & &\\[-2.8mm]
     \multirow{2}*{\underline{vegas}} & \multirow{2}*{504} & \multirow{2}*{5} & \multirow{2}*{106} & \multirow{2}*{14} & \xmark & 232,323.47 & \textbf{503.93} & 493.28 & \cg{\textbf{490.19 \dag \ddag}} & 507.81 \\
     & & & & &  \cmark & 287,607.592 & \textbf{507.60} & 517.85 & \cg\textbf{498.17 \dag \ddag} & 510.78 \\
\bottomrule
\end{tabular}} \medskip
    \caption{\color{black}Mean squared errors for the \textbf{pinball loss function} (with $\tau = 0.5$). We use the same abbreviations as in Table~\ref{tab:logitic}.}
    \label{tab:svr} \vspace{-0.5cm}
}
\end{table}

\begin{table}[tb]
{\color{black}
    \centering
    \resizebox{0.85\columnwidth}{!}{%
     \begin{tabular}{lcccc|c|cc|cc|c}
\toprule
Dataset &  $N$ & $M_\mx$ & $M_\mz$ & $K$ & Reduced? & Nom  &  MixF & r-Nom  & r-MixF  & ConF \\
\midrule
     \multirow{2}*{\underline{bike}} & \multirow{2}*{17,379} & \multirow{2}*{4} & \multirow{2}*{31} & \multirow{2}*{5} & \xmark & 217.87 & \cg\textbf{217.61} & 217.88 & \textbf{217.62} & \cg217.61  \\
     & & & & &  \cmark & 218.18  & \textbf{217.96}  &218.15 & \cg\textbf{217.95} & 217.99  \\
    \hline
     & & & & & & & &\\[-2.8mm]
     \multirow{2}*{\underline{fire}} & \multirow{2}*{517} & \multirow{2}*{10} & \multirow{2}*{31} & \multirow{2}*{4} & \xmark & 133.10 & \textbf{122.58} & 136.26 & \cg\textbf{121.13 \dag \ddag} & 122.58\\
     & & & & &  \cmark & 185.04 & \textbf{108.54} & 125.23 & \cg\textbf{108.28 \dag} & \textbf{108.54} \\
    \hline
     & & & & & & & &\\[-2.8mm]
     \multirow{2}*{\underline{flare}} & \multirow{2}*{1,066} & \multirow{2}*{0} & \multirow{2}*{21} & \multirow{2}*{9} & \xmark & \textbf{212.93} & 213.55 & 215.49 & \cg\textbf{211.32 \dag \ddag} & 214.00 \\
     & & & & &  \cmark & 1,562.88 & \textbf{209.84} & 232.67 & \cg\textbf{207.79 \dag \ddag} & 210.22 \\
    \hline
     & & & & & & & &\\[-2.8mm]
     \multirow{2}*{\underline{garments}} & \multirow{2}*{1,197} & \multirow{2}*{7} & \multirow{2}*{22} & \multirow{2}*{4} & \xmark & 369.10 & \cg\textbf{368.69} & 370.08 & \textbf{369.04} & 368.89 \\
     & & & & &  \cmark & 871.46 & \textbf{371.12} & \cg\textbf{370.72} & 371.45 & 371.79\\
     \hline
     & & & & & & & &\\[-2.8mm]
     \multirow{2}*{\underline{imports}} & \multirow{2}*{193} & \multirow{2}*{14} & \multirow{2}*{45} & \multirow{2}*{10} & \xmark & 621.38 & \cg{\textbf{293.31 \dag \ddag}} & 331.36 & \textbf{297.68} & 314.97 \\
     & & & & &  \cmark & 761.05 & \textbf{378.83} & 370.97 & \cg{\textbf{338.95 \dag \ddag}} & 382.32 \\
     \hline
     & & & & & & & &\\[-2.8mm]
     \multirow{2}*{\underline{student}} & \multirow{2}*{395} & \multirow{2}*{13} & \multirow{2}*{26} & \multirow{2}*{17} & \xmark & 399.65 & \textbf{397.63} & 383.57 &  \cg\textbf{383.18 \ddag} & 404.89 \\
     & & & & &  \cmark & 451.93 & \cg{\textbf{397.62 \dag \ddag}} & 410.40 & \textbf{399.46} & 412.98 \\
    \hline
     & & & & & & & &\\[-2.8mm]
     \multirow{2}*{\underline{vegas}} & \multirow{2}*{504} & \multirow{2}*{5} & \multirow{2}*{106} & \multirow{2}*{14} & \xmark & 232,332.44 & \textbf{502.89} & 503.37 & \cg\textbf{492.94 \dag \ddag} & 506.67 \\
     & & & & &  \cmark & 287,597.53 & \textbf{507.90} & 507.74 & \cg\textbf{502.73 \dag \ddag} & 510.15 \\
\bottomrule
\end{tabular}} \medskip
    \caption{\color{black}Mean squared errors for the \textbf{$\tau$-insensitive loss function} (with $\tau = 0.1$). We use the same abbreviations as in Table~\ref{tab:logitic}.}
    \label{tab:quant} \vspace{-0.5cm}
}
\end{table}

\clearpage

{
    \singlespacing
    \small
    \bibliographystyle{chicago}
    \bibliography{bibliography}
}

\clearpage

\setcounter{page}{1}
\renewcommand{\thepage}{EC-\arabic{page}}
\linespread{1.5}

\begin{center}
    \LARGE E-Companion to ``It's All in the Mix: \\ Wasserstein Machine Learning with Mixed Features''
\end{center}

\section*{\color{black} Appendix A: Wasserstein Classification with Mixed Features}\label{sec-classification}

{\color{black} This section derives exponential-size convex reformulations of the mixed-feature Wasserstein learning problem~\eqref{eq-dro-gen2} for classification problems with convex and Lipschitz continuous as well as piece-wise affine and convex loss functions $L$. One readily confirms that the resulting reformulations are special cases of our unified representation~\eqref{eq-most-vio-generic}, which implies that our results in this appendix prove Theorem~\ref{thm-convex-reformulation} for the case of classification problems.}

\begin{prop}\label{prop-der-class-conv}
    Consider a classification problem with a convex and Lipschitz continuous loss function as well as $\mX = \mR^{M_\mx}$. In this case, the Wasserstein learning problem~\eqref{eq-dro-gen2} is equivalent to
    \begin{myequation}\label{eq-prop-class-conv}
        \mspace{-50mu}
        \begin{array}{l@{\quad}l@{\qquad}l}
            \displaystyle \mathop{\mathrm{minimize}}_{\bbeta, \lambda, \bs} & \displaystyle \lambda \epsilon + \dfrac{1}{N} \sum_{n \in [N]} s_n \\[5mm]
            \displaystyle \mathrm{subject\;to} & 
            \left. \mspace{-10mu}
            \begin{array}{l}
                \displaystyle l_{\bbeta}(\bx^n, \bz, y^n) - \lambda \kappa_{\mz} d_{\mz}(\bz,\bz^n)  \leq s_n \\
                \displaystyle l_{\bbeta}(\bx^n, \bz, -y^n)  - \lambda \kappa_{\mz} d_{\mz}(\bz,\bz^n) - \lambda\kappa_{\my} \leq s_n
            \end{array}
            \right\}
            \forall n \in [N], \; \forall \bz \in \mC \\
            & \displaystyle \mathrm{lip}(L) \cdot \lVert \bbeta_{\mx}  \rVert_* \leq \lambda \\
            & \multicolumn{2}{l}{\displaystyle \mspace{-8mu} \bbeta = (\beta_0, \bbeta_\mx, \bbeta_\mz) \in \mR^{1 + M_\mx + M_\mz}, \;\; \lambda \in \mR_+, \;\; \bs \in \mR^N_+.}
        \end{array}
    \end{myequation}
\end{prop}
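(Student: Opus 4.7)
The plan is to eliminate the two embedded suprema in the semi-infinite constraint of~\eqref{eq-dro-gen2} in two stages. First I would enumerate the supremum over $y \in \mY = \{-1, +1\}$ exhaustively, and then I would close out the supremum over $\bx \in \mR^{M_\mx}$ using the Lipschitz continuity of $L$ together with a standard dual-norm argument.

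For the first stage, I would split $\sup_{(\bx, y)}$ into a maximum of two suprema $\sup_\bx$, one for $y = y^n$ (carrying no label-mismatch penalty) and one for $y = -y^n$ (carrying the constant penalty $\lambda \kappa_\my$, since $d_\my(y, y^n) = \indicate{y \neq y^n}$). Rewriting the inequality $\max\{A, B\} \leq s_n + \lambda \kappa_\mz d_\mz(\bz, \bz^n)$ as the pair $A \leq s_n + \lambda \kappa_\mz d_\mz(\bz, \bz^n)$ and $B \leq s_n + \lambda \kappa_\mz d_\mz(\bz, \bz^n)$ produces the two constraint families that appear in~\eqref{eq-prop-class-conv}, modulo a closed-form evaluation of each inner supremum in $\bx$.

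For the second stage, fix a sign $\tilde{y} \in \{-1, +1\}$ and abbreviate the affine prediction as $e(\bx) = \tilde{y}\cdot[\beta_0 + \bbeta_\mx^\top \bx + \bbeta_\mz^\top \bz]$, so that $|e(\bx) - e(\bx^n)| \leq \lVert \bbeta_\mx \rVert_* \lVert \bx - \bx^n \rVert$ by definition of the dual norm. I claim that
\begin{equation*}
\sup_{\bx \in \mR^{M_\mx}} \bigl\{ L(e(\bx)) - \lambda \lVert \bx - \bx^n \rVert \bigr\} = \begin{cases} L(e(\bx^n)) & \text{if } \lambda \geq \mathrm{lip}(L)\cdot \lVert \bbeta_\mx \rVert_*, \\ +\infty & \text{otherwise.} \end{cases}
\end{equation*}
The ``$\leq$'' direction in the finite regime follows by chaining the Lipschitz bound on $L$ with the dual-norm inequality above, and equality is attained at $\bx = \bx^n$.

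The main obstacle is the unbounded regime. Because $L$ is convex, non-constant and has Lipschitz modulus $\mathrm{lip}(L) > 0$, its non-decreasing subgradient map must attain this modulus in absolute value at $+\infty$ or at $-\infty$. I would pick a unit direction $\bm{d} \in \mR^{M_\mx}$ such that $\tilde{y}\bbeta_\mx^\top \bm{d}$ equals $\pm \lVert \bbeta_\mx \rVert_*$, with the sign matching the asymptote on which $L$ grows linearly, and evaluate the objective along the ray $\bx = \bx^n + t\bm{d}$ as $t \to \infty$: the resulting asymptotic rate is $(\mathrm{lip}(L)\lVert \bbeta_\mx \rVert_* - \lambda)\,t$, which diverges to $+\infty$ whenever $\lambda < \mathrm{lip}(L)\lVert \bbeta_\mx \rVert_*$ (the degenerate case $\bbeta_\mx = 0$ being trivial). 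Imposing the constraint $\mathrm{lip}(L)\lVert \bbeta_\mx \rVert_* \leq \lambda$ to rule out this divergent regime, and collecting the two remaining finite constraints for every $(n, \bz) \in [N] \times \mC$, yields exactly~\eqref{eq-prop-class-conv}.
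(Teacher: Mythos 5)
Your argument is correct and follows essentially the same route as the paper: enumerate $y \in \{-1,+1\}$ to obtain the two constraint families, then evaluate the inner supremum over $\bx$ via the identity that it equals $L(e(\bx^n))$ when $\mathrm{lip}(L)\cdot\lVert\bbeta_\mx\rVert_* \leq \lambda$ and $+\infty$ otherwise, which is exactly the paper's Lemma~\ref{lem-conv-class}. The only difference is that you prove this inner-supremum identity from first principles (Lipschitz chaining for the upper bound, a ray-divergence argument exploiting the asymptotic slope of a convex Lipschitz function for the unbounded regime), whereas the paper obtains it by a change of variables reducing to Lemma~47 of \cite{JMLR:v20:17-633} and then handles the edge cases $\lambda = 0$ and $\bbeta_\mx = \bm{0}$ separately.
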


{\color{black} When $\kappa_\my = \infty$, the second constraint set in Proposition~\ref{prop-der-class-conv} is redundant for any $\lambda>0$. This is the case whenever $\lambda$ does not vanish at optimality (\textit{i.e.}, the Wasserstein radius $\varepsilon$ actually affects the optimal solution).}

{\color{black} Formulation~\eqref{eq-prop-class-conv} emerges as a special case of our unified representation~\eqref{eq-most-vio-generic} if we set $\mathcal{I} = \{ \pm 1 \}$, $\btheta = (\bbeta, \lambda)$, $\bsigma = \bs$, $f_0(\btheta,\bsigma) = \lambda \epsilon + \dfrac{1}{N} \sum_{n \in [N]} s_n$, $f_{ni}(e) = L(e)$, $g_{ni}(\btheta, \bm{\xi}^n_{-\mathbf{z}}; \bz) = i y^n \cdot [\beta_0 + \bbeta_\mx{}^\top \bx^n + \bbeta_\mz{}^\top \bz]$, $h_{ni}(\btheta, \bm{\xi}^n_{-\mathbf{z}}; d_{\mz} (\bz, \bz^n)) = \lambda \kappa_{\mz} d_{\mz}(\bz,\bz^n) + \lambda \kappa_{\my} \cdot \indicate{i = -1}$ and $\Theta = \{\btheta : \mathrm{lip}(L) \cdot \lVert \bbeta_{\mx}  \rVert_* \leq \lambda,\, \lambda \in \mR_+\}$. One verifies that this choice of $\mathcal{I}$, $\btheta$, $\bsigma$, $f_0$, $f_{ni}$, $g_{ni}$, $h_{ni}$ and $\Theta$ satisfies the regularity conditions imposed on problem~\eqref{eq-most-vio-generic} in the main text.

\begin{prop}\label{prop-bounded-class-conv}
    If $\bbeta$ in problem~\eqref{eq-prop-class-conv} is restricted to a bounded hypothesis set $\mathcal{H}$, then $\lambda$ can be restricted to a bounded set as well.
\end{prop}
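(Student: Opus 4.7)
The plan is to bound $\lambda$ \emph{a priori} by constructing an explicit feasible solution whose objective value is finite and then invoking the non-negativity of $\bs$. Because the objective $\lambda \epsilon + \tfrac{1}{N} \sum_n s_n$ is at least $\lambda \epsilon$ on the feasible set, any such finite upper bound on the optimal value immediately translates into a finite upper bound on the optimal $\lambda$.

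First I would exploit the fact that $\mathcal{H}$ is bounded, $\mathbb{C}$ is finite, and the data points $(\bx^n, \bz^n, y^n)$ are fixed. Since $L$ is continuous (indeed, Lipschitz continuous), the mapping $\bbeta \mapsto l_\bbeta(\bx^n, \bz, y)$ is continuous, and hence attains a uniform upper bound
\begin{equation*}
    M \;:=\; \sup \left\{ l_\bbeta(\bx^n, \bz, y) \,:\, \bbeta \in \mathcal{H},\, n \in [N],\, \bz \in \mathbb{C},\, y \in \{-1, +1\} \right\} \;<\; \infty.
\end{equation*}
Likewise, $B := \sup_{\bbeta \in \mathcal{H}} \lVert \bbeta_\mx \rVert_*$ is finite because $\mathcal{H}$ is bounded and $\lVert \cdot \rVert_*$ is a norm.

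Next I would fix any $\bbeta_0 \in \mathcal{H}$, set $\lambda_0 := \max\{ M/\kappa_\mz,\, M/\kappa_\my,\, \mathrm{lip}(L) \cdot B \}$, and define $s_n^0 := l_{\bbeta_0}(\bx^n, \bz^n, y^n)$ for every $n \in [N]$. I would verify feasibility of $(\bbeta_0, \lambda_0, \bs^0)$ in problem~\eqref{eq-prop-class-conv} as follows: the Lipschitz constraint $\mathrm{lip}(L) \cdot \lVert \bbeta_{0,\mx} \rVert_* \leq \lambda_0$ holds by the choice of $\lambda_0$; the first constraint is tight at $\bz = \bz^n$ and, for $\bz \neq \bz^n$, follows from $\lambda_0 \kappa_\mz d_\mz(\bz, \bz^n) \geq \lambda_0 \kappa_\mz \geq M \geq l_{\bbeta_0}(\bx^n, \bz, y^n)$; the second constraint is handled identically using $\lambda_0 \kappa_\my \geq M$ (with the understanding that this term is vacuously dropped when $\kappa_\my = \infty$). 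Thus $V^\star := \lambda_0 \epsilon + \tfrac{1}{N} \sum_n s_n^0 \leq \lambda_0 \epsilon + M$ is a valid upper bound on the optimal value of~\eqref{eq-prop-class-conv}.

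To conclude, any optimal solution $(\bbeta^\star, \lambda^\star, \bs^\star)$ must satisfy $\lambda^\star \epsilon \leq \lambda^\star \epsilon + \tfrac{1}{N}\sum_n s^\star_n \leq V^\star$ by the non-negativity of $\bs^\star$, and therefore $\lambda^\star \leq V^\star / \epsilon$. Hence one can append the constraint $\lambda \leq V^\star / \epsilon$ to problem~\eqref{eq-prop-class-conv} without altering its optimal value or its set of optimizers, which confirms that $\lambda$ is restricted to the bounded interval $[0, V^\star/\epsilon]$. I anticipate no genuine obstacles: the construction of $M$ and $\lambda_0$ is routine, and the only mildly delicate point is a uniform treatment of the case $\kappa_\my = \infty$, in which the second family of constraints is vacuous and the corresponding term in $\lambda_0$ is simply omitted.
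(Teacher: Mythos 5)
Your proof is correct, but it takes a genuinely different route from the paper's. The paper argues by \emph{constraint domination}: it exhibits a threshold $\overline{\lambda} = \max\{\overline{\lambda}_1, \overline{\lambda}_2\}$, where $\overline{\lambda}_1 = \sup_{\bbeta \in \mathcal{H}} \mathrm{lip}(L)\lVert \bbeta_\mx \rVert_*$ makes the Lipschitz constraint non-binding over all of $\mathcal{H}$ and $\overline{\lambda}_2$ is the largest ratio of a constraint's loss term to its transport cost $\kappa_\mz d_\mz(\bz,\bz^n) + \kappa_\my \indicate{i=-1}$ over all indices with $(\bz,i) \neq (\bz^n,1)$; beyond $\overline{\lambda}$ every such constraint is weakly dominated by $s_n \geq 0$, so raising $\lambda$ further only increases the objective and some optimal solution has $\lambda \leq \overline{\lambda}$. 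You instead exhibit a single feasible point $(\bbeta_0, \lambda_0, \bs^0)$, bound the optimal value by $V^\star$, and use $\lambda \epsilon \leq \lambda\epsilon + \frac{1}{N}\sum_n s_n$ to conclude $\lambda^\star \leq V^\star/\epsilon$ for \emph{every} optimal solution. Your feasibility verification is sound (the key facts being $d_\mz(\bz,\bz^n) \geq 1$ for $\bz \neq \bz^n$ and $M < \infty$ by boundedness of $\mathcal{H}$ and finiteness of $\mathbb{Z}$), and your handling of $\kappa_\my = \infty$ matches the paper's observation that the second constraint family is then vacuous. The trade-off is that your bound scales as $\mathcal{O}(1/\epsilon)$ and thus degrades for small Wasserstein radii, whereas the paper's $\overline{\lambda}$ is independent of $\epsilon$; since the paper fixes $\epsilon > 0$, either bound suffices for the downstream purpose of making $\Theta$ compact in Theorem~\ref{thm:complexity}. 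A minor point worth making explicit is that your division by $\epsilon$ relies on the standing assumption $\epsilon > 0$ from Section~\ref{sec-wasser}.
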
}

Proposition~\ref{prop-der-class-conv} covers Wasserstein support vector machines with a smooth Hinge loss,
\begin{equation*}
    L(e) = \begin{cases}
		1/2 - e & \text{if } e \leq 0, \\
        (1/2) \cdot (1 - e)^{2} & \text{if } e \in (0, 1), \\
        0 & \text{otherwise},
    \end{cases}
\end{equation*}
where the Lipschitz modulus is $\mathrm{lip}(L) = 1$, and logistic regression with a log-loss,
\begin{equation*}
    L(e) = \log (1 + \exp[-e]),
\end{equation*}
where again $\mathrm{lip}(L) = 1$. We provide the corresponding reformulations next.

\begin{coro}\label{coro-class-con}
    The first set of inequality constraints in~\eqref{eq-prop-class-conv} can be reformulated as follows.
    \begin{enumerate}[(i)]
        \item For the smooth Hinge loss function:
        \begin{equation*}
            \mspace{-20mu}
            \left.
            \begin{array}{l}
                \displaystyle \dfrac{1}{2}  \left(w_{\bz,n}^+ - y^n \cdot ( \beta_0 + \bbeta_{\mx}{}^\top \bx^n + \bbeta_{\mz}{}^\top \bz )\right)^2 + 1 - w_{\bz,n}^+ - \lambda \kappa_{\mz} d_{\mz}(\bz,\bz^n) \leq s_n \\[5mm]
                \dfrac{1}{2}  \left(w_{\bz,n}^+ - y^n \cdot ( \beta_0 + \bbeta_{\mx}{}^\top \bx^n + \bbeta_{\mz}{}^\top \bz )\right)^2 - \lambda \kappa_{\mz} d_{\mz}(\bz,\bz^n) \leq s_n
            \end{array}
            \right\}
            \forall n \in [N], \; \forall \bz \in \mC
        \end{equation*}
        \item For the log-loss function:
        \begin{equation*}
            \log\left(1 + \exp \left[- y^n \cdot ( \beta_0 + \bbeta_{\mx}{}^\top \bx^n + \bbeta_{\mz}{}^\top \bz) \right] \right) - \lambda \kappa_{\mz} d_{\mz}(\bz,\bz^n) \leq s_n
            \quad \forall n \in [N], \; \forall \bz \in \mC
        \end{equation*}
    \end{enumerate}
    Here, $w_{\bz,n}^+ \in \mR$ are auxiliary decision variables. The second set of inequality constraints in~\eqref{eq-prop-class-conv} follows similarly if we replace $w_{\bz,n}^+ \in \mR$ with additional auxiliary decision variables $w_{\bz,n}^- \in \mR$, replace $-y^n$ with $+y^n$ and subtract the expression $\lambda \kappa_{\my}$ from the constraint left-hand sides.
\end{coro}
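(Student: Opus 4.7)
The proof proceeds by handling the two loss functions separately, with the log-loss case being immediate and the smooth Hinge loss case requiring a variational representation.

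For part (ii), the log-loss $L(e)=\log(1+\exp(-e))$ is already in closed convex form, so substituting $e = y^n\cdot(\beta_0 + \bbeta_{\mx}{}^\top \bx^n + \bbeta_{\mz}{}^\top \bz)$ directly into the first inequality of~\eqref{eq-prop-class-conv} yields the stated constraint with no auxiliary variables. Nothing further is needed.

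For part (i), the main step is to establish the variational identity
\begin{equation*}
L(e) \;=\; \min_{w \in \mR}\left\{\tfrac{1}{2}(w-e)^2 \,+\, \max\{0,\,1-w\}\right\},
\end{equation*}
which I would prove by case analysis on $e$. The inner function of $w$ is convex, so first-order optimality suffices: for $e \geq 1$, the unconstrained minimizer $w^\star = e$ lies in the region $\{w \geq 1\}$ where $\max\{0,1-w\}=0$, giving objective value $0$; for $e \leq 0$, the minimizer of $\tfrac{1}{2}(w-e)^2 + 1 - w$ is $w^\star = e+1 \leq 1$ with value $\tfrac{1}{2} - e$; and for $0 < e < 1$, the boundary point $w^\star = 1$ is optimal on both branches, yielding $\tfrac{1}{2}(1-e)^2$. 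In each case the value matches the piecewise definition of $L$, which completes the identity.

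With this identity in hand, the constraint $L(e) - \lambda \kappa_{\mz} d_{\mz}(\bz,\bz^n) \leq s_n$ is equivalent to the existence of an auxiliary variable $w^+_{\bz,n} \in \mR$ satisfying
\begin{equation*}
\tfrac{1}{2}(w^+_{\bz,n} - e)^2 \,+\, \max\{0,\,1 - w^+_{\bz,n}\} \,-\, \lambda\kappa_{\mz}d_{\mz}(\bz,\bz^n) \;\leq\; s_n,
\end{equation*}
and epigraphical lifting of the $\max\{0,1-w^+_{\bz,n}\}$ term produces exactly the two convex inequalities claimed in part (i). Since $s_n$ enters the objective of~\eqref{eq-prop-class-conv} monotonically, the existential quantifier over $w^+_{\bz,n}$ is absorbed into the overall minimization, so $w^+_{\bz,n}$ becomes a genuine decision variable. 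The second set of inequality constraints is handled by the identical argument applied to $l_{\bbeta}(\bx^n,\bz,-y^n) = L(-y^n\cdot[\beta_0 + \bbeta_{\mx}{}^\top \bx^n + \bbeta_{\mz}{}^\top \bz])$ with new auxiliaries $w^-_{\bz,n}$ and the extra $-\lambda \kappa_{\my}$ term carried through. The only nontrivial step is the variational identity for the smooth Hinge loss; everything else is routine epigraphical manipulation.
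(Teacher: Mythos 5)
Your proof is correct and takes essentially the approach the paper delegates to Corollaries~16 and~17 of \cite{JMLR:v20:17-633}: a variational (conjugate-type) representation of the smooth Hinge loss via an auxiliary scalar, followed by epigraphical splitting of the $\max$ term, and direct substitution for the log-loss. Your case analysis verifying $L(e)=\min_{w\in\mR}\{\tfrac{1}{2}(w-e)^2+\max\{0,\,1-w\}\}$ checks out in all three regimes, and the absorption of the existential quantifier over $w^+_{\bz,n}$ into the outer minimization is correctly justified by the monotonicity of the objective in $s_n$.
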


We now provide a reformulation of problem~\eqref{eq-dro-gen2} without embedded maximizations when the loss function $L$ is piece-wise affine and convex.

\begin{prop}\label{prop-der-class-aff}
    Consider a classification problem with a piece-wise affine and convex loss function $L(e) = \max_{j \in [J]} \{ a_j e + b_j \}$, and assume that $\mX = \{ \bx \in \mR^{M_\mx} \, : \, \bm{C} \bx \preccurlyeq_{\mathcal{C}} \bm{d} \}$ for some $\bm{C} \in \mR^{r \times {M_\mx}}$, $\bm{d} \in \mR^r$ and proper convex cone $\mathcal{C} \subseteq \mR^{r}$. If $\mX$ admits a Slater point $\bx^\emph{s} \in \mR^{M_\mx}$ such that $\bm{C} \bx^\emph{s} \prec_{\mathcal{C}} \bm{d}$, then the Wasserstein learning problem~\eqref{eq-dro-gen2} is equivalent to
    \begin{myequation}\label{eq-prop-class-aff}
        \begin{array}{l@{\quad}l}
            \displaystyle \mathop{\text{\emph{minimize}}}_{\bbeta, \lambda, \bs,\bq_{nj}^+, \bq_{nj}^-} & \displaystyle \lambda \epsilon + \dfrac{1}{N} \sum_{n \in [N]} s_n \\[5mm]
            \displaystyle \raisebox{2mm}{\text{\emph{subject to}}} &
            \left. \mspace{-10mu}
            \begin{array}{l}
                \displaystyle \bq_{nj}^+{}^\top ( \bm{d} - \bm{C} \bx^n ) +  a_j  y^n \cdot ( \beta_0 + \bbeta_{\mx}{}^\top \bx^n + \bbeta_{\mz}{}^\top \bz ) - \lambda \kappa_{\mz} d_{\mz}(\bz,\bz^n) + b_j \leq s_n \\ 
                \displaystyle \bq_{nj}^-{}^\top ( \bm{d} - \bm{C} \bx^n ) -  a_j  y^n \cdot ( \beta_0 + \bbeta_{\mx}{}^\top \bx^n + \bbeta_{\mz}{}^\top \bz ) - \lambda \kappa_{\mz} d_{\mz}(\bz,\bz^n) - \lambda\kappa_{\my} + b_j \leq s_n
            \end{array}
            \right\} \\
            & \displaystyle \mspace{391mu} \forall n \in [N], \; \forall j \in [J], \; \forall \bz \in \mC \\
            & \displaystyle \lVert a_j  y^n \cdot \bbeta_{\mx} - \bm{C}{}^\top \bq_{nj}^+  \rVert_* \leq \lambda, \;\;
            \lVert a_j  y^n \cdot \bbeta_{\mx} + \bm{C}{}^\top \bq_{nj}^-  \rVert_* \leq \lambda \hfill \displaystyle \forall n \in [N], \; \forall j \in [J] \\
            & \displaystyle \bbeta = (\beta_0, \bbeta_\mx, \bbeta_\mz) \in \mR^{1 + M_\mx + M_\mz}, \;\; \lambda \in \mR_+, \;\; \bs \in \mR^N_+ \\
            & \displaystyle \bq_{nj}^+, \bq_{nj}^- \in \mathcal{C}^*, \, n \in [N] \text{ and } j \in [J].
        \end{array}
    \end{myequation}
\end{prop}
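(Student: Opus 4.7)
\textbf{Proof proposal for Proposition~\ref{prop-der-class-aff}.}
My starting point is the semi-infinite formulation in Observation~\ref{obs-derivation}. Since $\mathbb{Y} = \{-1, +1\}$ and $d_\my(y, y^n) = \indicate{y \neq y^n}$, the supremum over $(\bx, y) \in \mathbb{X} \times \mathbb{Y}$ decomposes into two separate suprema over $\bx \in \mathbb{X}$, one for $y = y^n$ (no output mismatch penalty) and one for $y = -y^n$ (penalty $\lambda \kappa_\my$). The single constraint in~\eqref{eq-dro-gen2} therefore splits into the two constraint systems we see in~\eqref{eq-prop-class-aff}, indexed by $+$ and $-$, which I will handle in parallel.

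Next, I would exploit the piecewise-affine structure $L(e) = \max_{j \in [J]} \{a_j e + b_j\}$. Fixing $\bz$ and the output sign, the loss $l_{\bbeta}(\bx, \bz, \pm y^n) = \max_{j \in [J]} \{\pm a_j y^n (\beta_0 + \bbeta_\mx^\top \bx + \bbeta_\mz^\top \bz) + b_j\}$, so the supremum of $l_{\bbeta} - \lambda \lVert \bx - \bx^n \rVert$ over $\bx \in \mathbb{X}$ being bounded by $s_n + \lambda \kappa_\mz d_\mz(\bz, \bz^n)$ is equivalent to a family of $J$ constraints, one per piece $j$. For each piece I perform the change of variables $\bu = \bx - \bx^n$ so the feasibility constraint becomes $\bm{C}\bu \preccurlyeq_{\mathcal{C}} \bm{d} - \bm{C}\bx^n$, and I isolate the only $\bu$-dependent terms to obtain an inner problem of the form
\begin{equation*}
    \sup_{\bu} \bigl\{ (a_j y^n \bbeta_\mx)^\top \bu - \lambda \lVert \bu \rVert \; : \; \bm{C}\bu \preccurlyeq_{\mathcal{C}} \bm{d} - \bm{C}\bx^n \bigr\}.
\end{equation*}

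Now comes the workhorse step: dualizing this conic program. The Slater point $\bx^\mathrm{s}$ satisfies $\bm{C}(\bx^\mathrm{s} - \bx^n) \prec_{\mathcal{C}} \bm{d} - \bm{C}\bx^n$, so strong conic duality applies. Introducing a dual variable $\bq_{nj}^+ \in \mathcal{C}^*$ for the $y = y^n$ branch, the Lagrangian dual equals $\inf_{\bq_{nj}^+ \in \mathcal{C}^*} \bigl\{\bq_{nj}^+{}^\top (\bm{d} - \bm{C}\bx^n) + \sup_{\bu} [(a_j y^n \bbeta_\mx - \bm{C}^\top \bq_{nj}^+)^\top \bu - \lambda \lVert \bu \rVert]\bigr\}$; the inner supremum over $\bu$ equals $0$ when $\lVert a_j y^n \bbeta_\mx - \bm{C}^\top \bq_{nj}^+ \rVert_* \leq \lambda$ and $+\infty$ otherwise, by the definition of the dual norm. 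Because the outer problem minimizes over its variables, the infimum over $\bq_{nj}^+$ can be absorbed into the outer minimization by treating $\bq_{nj}^+$ as additional decision variables, which produces precisely the first constraint of~\eqref{eq-prop-class-aff} together with the dual-norm constraint. Repeating this argument for the $y = -y^n$ branch, with a separate dual variable $\bq_{nj}^-$ and the extra $-\lambda \kappa_\my$ shift, yields the second pair of constraints.

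The main obstacle I anticipate is bookkeeping rather than conceptual: cleanly tracking the sign of $y^n$ through the inner duality, verifying that the Slater condition on $\mathbb{X}$ transfers to the shifted feasible region $\{\bu : \bm{C}\bu \preccurlyeq_{\mathcal{C}} \bm{d} - \bm{C}\bx^n\}$ uniformly in $n$, and justifying the interchange of the outer minimization with the newly introduced inner infima. Once strong duality is in hand, the remaining manipulations are purely algebraic.
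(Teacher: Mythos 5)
Your proposal is correct and follows essentially the same route as the paper, which simply defers to the proof of Theorem~14~(i) of \cite{JMLR:v20:17-633}: split the supremum over the binary label, decompose over the affine pieces of $L$, and dualize the inner conic program over $\bx \in \mX$ under the Slater condition to obtain the multipliers $\bq_{nj}^{\pm} \in \mathcal{C}^*$ and the dual-norm constraints. Your attention to the attainment of the inner infima (guaranteed by strong conic duality under the Slater point) is exactly the justification needed to absorb them into the outer minimization.
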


{\color{black} When $\kappa_\my = \infty$, the constraints involving $\bq_{nj}^-$ are redundant whenever $\lambda > 0$. In this case, the variables $\bq_{nj}^-$ and all constraints involving $\bq_{nj}^-$ can be removed from Proposition~\ref{prop-der-class-aff}.}

{\color{black} Formulation~\eqref{eq-prop-class-aff} emerges as a special case of our unified representation~\eqref{eq-most-vio-generic} if we set $i = (j, t)$, $\mathcal{I} = [J] \times \{ \pm 1 \}$, $\btheta = (\bbeta, \lambda, \bq_{ni})$, $\bsigma = \bs$, $f_0(\btheta,\bsigma) = \lambda \epsilon + \dfrac{1}{N} \sum_{n \in [N]} s_n$, $f_{ni}(e) = a_j e + b_j$, $g_{ni}(\btheta, \bm{\xi}^n_{-\mathbf{z}}; \bz) = t y^n \cdot [\beta_0 + \bbeta_\mx{}^\top \bx^n + \bbeta_\mz{}^\top \bz]$, $h_{ni}(\btheta, \bm{\xi}^n_{-\mathbf{z}}; d_{\mz} (\bz, \bz^n)) = - \bq_{ni}{}^\top ( \bm{d} - \bm{C} \bx^n ) + \lambda \kappa_{\mz} d_{\mz}(\bz,\bz^n) + \lambda \kappa_{\my} \cdot \indicate{t = -1}$ and $\Theta = \{\btheta : \lVert a_j  y^n \cdot \bbeta_{\mx} - t \cdot \bm{C}{}^\top \bq_{ni}  \rVert_* \leq \lambda \, \forall (n,i) \in [N] \times \mathcal{I},\, \bq_{ni} \in \mathcal{C}^* \, \forall (n,i) \in [N] \times \mathcal{I},\, \lambda \in \mR_+\}$. One verifies that this choice of $\mathcal{I}$, $\btheta$, $\bsigma$, $f_0$, $f_{ni}$, $g_{ni}$, $h_{ni}$ and $\Theta$ satisfies the regularity conditions imposed on problem~\eqref{eq-most-vio-generic} in the main text.

\begin{prop}\label{prop-bounded-class-aff}
    If $\bbeta$ in problem~\eqref{eq-prop-class-aff} is restricted to a bounded hypothesis set $\mathcal{H}$, then $(\bq_{ni}^+, \bq_{ni}^-)_{n,i}$ and $\lambda$ can be restricted to a bounded set as well.
\end{prop}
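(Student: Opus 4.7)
The plan is to bound the auxiliary variables in the order $\lambda \to \bs \to (\bq_{nj}^+, \bq_{nj}^-)$, exploiting the fact that the first two appear explicitly with positive weight in the objective while the last must be controlled through the constraints with the help of the Slater point. Throughout I will use boundedness of $\mathcal{H}$ to certify the finiteness of various suprema over $\bbeta$.

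\emph{Step 1 (bounding $\lambda$ and $\bs$).} First I would construct an explicit feasible solution with bounded value: fixing any $\bbeta \in \mathcal{H}$ and choosing $\bq_{nj}^+ = \bq_{nj}^- = \bm{0} \in \mathcal{C}^*$, both norm constraints reduce to $|a_j| \lVert \bbeta_\mx \rVert_* \leq \lambda$, which is satisfied by any $\lambda_0 \geq (\max_{j \in [J]} |a_j|) \cdot \sup_{\bbeta \in \mathcal{H}} \lVert \bbeta_\mx \rVert_*$. The two inequality constraint families can then be met by picking each $s_n$ equal to the maximum of their right-hand sides over $(j, \bz)$, a quantity that is finite because $\mathcal{H}$ is bounded and $\mC$ is finite. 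The resulting feasible objective value $U$ is therefore finite, and since $\lambda \epsilon$ and $(1/N)\sum_n s_n$ enter the objective with positive coefficients while the remaining variables are nonnegative, any optimal solution must satisfy $\lambda \leq U/\epsilon$ and $s_n \leq N U$.

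\emph{Step 2 (scalar control of $\bq_{nj}^\pm$).} Specializing the first family of constraints in~\eqref{eq-prop-class-aff} to $\bz = \bz^n$ annihilates the discrete-distance term and yields
\[
\bq_{nj}^+{}^\top (\bm{d} - \bm{C}\bx^n) \;\leq\; s_n - a_j y^n (\beta_0 + \bbeta_\mx{}^\top \bx^n + \bbeta_\mz{}^\top \bz^n) - b_j,
\]
whose right-hand side is bounded by Step 1. Rewriting $\bm{d} - \bm{C}\bx^n = (\bm{d} - \bm{C}\bx^\mathrm{s}) - \bm{C}(\bx^n - \bx^\mathrm{s})$ converts this into an upper bound on $\bq_{nj}^+{}^\top(\bm{d} - \bm{C}\bx^\mathrm{s})$ provided the correction $(\bm{C}^\top \bq_{nj}^+)^\top(\bx^n - \bx^\mathrm{s})$ is itself bounded. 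That correction is controlled by the norm constraint $\lVert a_j y^n \bbeta_\mx - \bm{C}^\top \bq_{nj}^+ \rVert_* \leq \lambda$, which via the triangle inequality gives $\lVert \bm{C}^\top \bq_{nj}^+ \rVert_* \leq \lambda + |a_j| \lVert \bbeta_\mx \rVert_*$; this is bounded by Step 1. The same argument applied to the second family of constraints bounds $\bq_{nj}^-{}^\top(\bm{d} - \bm{C}\bx^\mathrm{s})$.

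\emph{Step 3 (passing to $\lVert \bq_{nj}^\pm \rVert_*$ via the Slater condition).} The hard part will be to convert the scalar bound from Step 2 into a norm bound. Since $\bm{d} - \bm{C}\bx^\mathrm{s} \in \mathrm{int}\,\mathcal{C}$, there exists $\eta > 0$ such that $\bm{d} - \bm{C}\bx^\mathrm{s} + \bm{v} \in \mathcal{C}$ for every $\lVert \bm{v} \rVert \leq \eta$. Pairing with $\bq_{nj}^\pm \in \mathcal{C}^*$ yields $\bq_{nj}^\pm{}^\top (\bm{d} - \bm{C}\bx^\mathrm{s}) \geq -\bq_{nj}^\pm{}^\top \bm{v}$ for all such $\bm{v}$, and maximizing over the $\eta$-ball produces $\bq_{nj}^\pm{}^\top (\bm{d} - \bm{C}\bx^\mathrm{s}) \geq \eta \lVert \bq_{nj}^\pm \rVert_*$. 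Combined with Step 2, this produces the required bound on $\lVert \bq_{nj}^\pm \rVert_*$. The step genuinely relies on the Slater assumption: if $\bm{d} - \bm{C}\bx^\mathrm{s}$ merely lay on the boundary of $\mathcal{C}$, dual vectors supported on the orthogonal face of $\mathcal{C}^*$ could evade control, so interior membership is what forces the pairing to dominate the dual norm.
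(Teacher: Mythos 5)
Your proof is correct, but it takes a genuinely different route from the paper's. The paper works in the opposite order: it first bounds the dual variables $\bq_{ni}$ via a dominance argument --- observing that $\bq_{ni}{}^\top(\bm{d}-\bm{C}\bx^n)\geq 0$ (so larger $\bq_{ni}$ can only pay off by relaxing the norm constraints on $\lambda$), then orthogonally decomposing $\bq_{ni}$ into its components in $\mathrm{Row}(\bm{C}^\top)$ and $\mathrm{Null}(\bm{C}^\top)$ and arguing that each component outside a suitable bounded set is dominated by a choice inside --- and only afterwards bounds $\lambda$ by exhibiting thresholds $\overline{\lambda}_1$ (feasibility of the norm constraints over $\mathcal{H}$) and $\overline{\lambda}_2$ (beyond which every constraint left-hand side is non-positive and hence dominated by $\bs\geq\bm{0}$). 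You instead bound $\lambda$ and $\bs$ first through an objective level-set argument (a feasible point with $\bq_{nj}^\pm=\bm{0}$ gives a finite value $U$, and positivity of $\epsilon$ and of the weights on $\bs$ converts $U$ into bounds on $\lambda$ and each $s_n$), and then control $\bq_{nj}^\pm$ through feasibility alone: the constraint at $\bz=\bz^n$ plus the norm constraints bound the pairing $\bq_{nj}^\pm{}^\top(\bm{d}-\bm{C}\bx^{\mathrm{s}})$, and the strict interiority of the Slater point converts that scalar bound into $\eta\lVert\bq_{nj}^\pm\rVert_*$. Your route buys two things: the bounds are explicit and constructive, and --- more importantly --- the Slater-point inequality $\bq^\top(\bm{d}-\bm{C}\bx^{\mathrm{s}})\geq\eta\lVert\bq\rVert_*$ handles the null-space component of $\bq_{nj}^\pm$ automatically, which is precisely the delicate part of the paper's argument (the existence of the bounded set $\mathcal{Q}^0$ is asserted there with little justification, and indeed rests implicitly on the same interiority that you invoke explicitly). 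The paper's dominance argument, on the other hand, yields slightly more structural information about which $\bq_{ni}$ can appear in an optimal solution without routing through the objective value. One cosmetic point: in your Step~1 the quantity you assign to $s_n$ is the maximum of the constraint \emph{left}-hand sides (excluding $s_n$) over $(j,\bz)$, not the right-hand sides; the argument is unaffected.
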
}

Proposition~\ref{prop-der-class-aff} covers Wasserstein support vector machines with a (non-smooth) Hinge loss,
\begin{equation*}
    L(e) = \max \, \{ 1 - e, \; 0 \},
\end{equation*}
which is a piece-wise affine convex function with $J=2$, $a_1 = -1$, $b_1 = 1$, $a_2 = 0$ and $b_2 = 0$. We provide the corresponding reformulation next.

\begin{coro}\label{coro-class-aff}
    For the (non-smooth) Hinge loss function, the first set of inequality constraints in~\eqref{eq-prop-class-aff} can be reformulated as
    \begin{equation*}
        \bq_{n}^+{}^\top ( \bm{d} - \bm{C} \bx^n ) - y^n ( \beta_0 + \bbeta_{\mx}{}^\top \bx^n + \bbeta_{\mz}{}^\top \bz ) - \lambda \kappa_{\mz} d_{\mz}(\bz,\bz^n) + 1 \leq s_n
        \quad \forall n \in [N], \; \forall \bz \in \mC.
    \end{equation*}
    The second set of inequality constraints in~\eqref{eq-prop-class-aff} follows similarly if we replace $-y^n$ with $+y^n$ as well as $\bq_{n}^+$ with $\bq_{n}^-$ and subtract the expression $\lambda \kappa_{\my}$ from the constraint left-hand sides.
\end{coro}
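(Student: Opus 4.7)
\textbf{Proof proposal for Corollary~\ref{coro-class-aff}.} The plan is to instantiate Proposition~\ref{prop-der-class-aff} with the Hinge-loss parameters $J=2$, $a_1=-1$, $b_1=1$, $a_2=0$, $b_2=0$, and then argue that the constraints indexed by $j=2$ are redundant and can be eliminated together with their associated dual variables. The resulting constraint system, after relabelling $\bq_{n1}^+\mapsto\bq_n^+$, is exactly the one stated in the corollary.

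First, I would substitute the four parameters above into the first inequality constraint of~\eqref{eq-prop-class-aff}. For $j=1$, this directly yields
\begin{equation*}
    \bq_{n1}^+{}^\top(\bm{d}-\bm{C}\bx^n)-y^n\cdot(\beta_0+\bbeta_\mx{}^\top\bx^n+\bbeta_\mz{}^\top\bz)-\lambda\kappa_\mz d_\mz(\bz,\bz^n)+1\leq s_n
\end{equation*}
for all $n\in[N]$ and $\bz\in\mC$, together with the accompanying norm constraint $\lVert-y^n\bbeta_\mx-\bm{C}^\top\bq_{n1}^+\rVert_*\leq\lambda$ and the cone membership $\bq_{n1}^+\in\mathcal{C}^*$. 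Relabelling $\bq_{n1}^+$ as $\bq_n^+$ reproduces the constraint displayed in the corollary.

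Second, I would show that the $j=2$ block is automatically satisfied by setting $\bq_{n2}^+=\bm{0}$. With $a_2=0$ and $b_2=0$, the inequality constraint specialises to $\bq_{n2}^+{}^\top(\bm{d}-\bm{C}\bx^n)-\lambda\kappa_\mz d_\mz(\bz,\bz^n)\leq s_n$, the norm constraint specialises to $\lVert\bm{C}^\top\bq_{n2}^+\rVert_*\leq\lambda$, and the cone membership is $\bq_{n2}^+\in\mathcal{C}^*$. Choosing $\bq_{n2}^+=\bm{0}$ trivially satisfies the cone membership and the norm constraint (since $\lambda\geq 0$), and reduces the inequality to $-\lambda\kappa_\mz d_\mz(\bz,\bz^n)\leq s_n$, which holds because the left-hand side is non-positive (as $\lambda,\kappa_\mz,d_\mz\geq 0$) while $s_n\geq 0$. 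Hence for any feasible solution of the proposed reformulation, we can extend it to a feasible solution of~\eqref{eq-prop-class-aff} by taking $\bq_{n2}^+=\bm{0}$; conversely, every feasible solution of~\eqref{eq-prop-class-aff} satisfies the proposed reformulation by discarding $\bq_{n2}^+$. This establishes equivalence.

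Finally, the analogous argument applies verbatim to the second set of inequalities: replacing $-y^n$ by $+y^n$ and $\bq_{nj}^+$ by $\bq_{nj}^-$, the $j=2$ constraint becomes $\bq_{n2}^-{}^\top(\bm{d}-\bm{C}\bx^n)-\lambda\kappa_\mz d_\mz(\bz,\bz^n)-\lambda\kappa_\my\leq s_n$, which is satisfied by $\bq_{n2}^-=\bm{0}$ because $-\lambda\kappa_\mz d_\mz(\bz,\bz^n)-\lambda\kappa_\my\leq 0\leq s_n$. I do not anticipate any serious obstacle; the whole argument is a direct specialisation of Proposition~\ref{prop-der-class-aff} coupled with the observation that the flat piece $L(e)=0$ of the Hinge loss imposes a vacuous constraint. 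The only point worth stating carefully is that $\bq_{n2}^+=\bm{0}$ simultaneously satisfies the cone, norm, and inequality constraints, which crucially uses $\lambda\geq 0$ and $s_n\geq 0$ from~\eqref{eq-prop-class-aff}.
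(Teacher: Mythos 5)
Your proposal is correct and follows the same route the paper intends: it specializes Proposition~\ref{prop-der-class-aff} to the Hinge-loss parameters $J=2$, $a_1=-1$, $b_1=1$, $a_2=0$, $b_2=0$ and eliminates the $j=2$ block by observing that $\bq_{n2}^{\pm}=\bm{0}$ renders those constraints redundant given $\lambda\geq 0$ and $\bs\in\mR^N_+$. The paper merely defers to Corollary~15 of \cite{JMLR:v20:17-633}, whose argument is exactly this specialization, so your write-up supplies the omitted details faithfully.
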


\newpage

\section*{\color{black} Appendix B: Wasserstein Regression with Mixed Features}\label{sec-regression}

{\color{black} This section derives exponential-size convex reformulations of the mixed-feature Wasserstein learning problem~\eqref{eq-dro-gen2} for regression problems with convex and Lipschitz continuous as well as piece-wise affine and convex loss functions $L$. One readily confirms that the resulting reformulations are special cases of our unified representation~\eqref{eq-most-vio-generic}, which implies that our results in this appendix prove Theorem~\ref{thm-convex-reformulation} for the case of regression problems.}


\begin{prop}\label{prop-der-reg-conv}
    Consider a regression problem with a convex and Lipschitz continuous loss function $L$ and $(\mX, \mY) = (\mR^{M_\mx}, \mR)$. In this case, the Wasserstein learning problem~\eqref{eq-dro-gen2} equals
    \begin{myequation}\label{eq-prop-reg-conv}
        \begin{array}{l@{\quad}l@{\qquad}l}
            \displaystyle \mathop{\mathrm{minimize}}_{\bbeta,\lambda, \bs} & \displaystyle \lambda \epsilon + \dfrac{1}{N} \sum_{n \in [N]} s_n \\[5mm]
            \displaystyle \mathrm{subject\;to} & \displaystyle l_{\bbeta}(\bx^n, \bz, y^n) - \lambda \kappa_{\mz} d_{\mz} (\bz,\bz^n)  \leq s_n & \displaystyle \forall n \in [N],\; \forall \bz \in \mC  \\
            & \displaystyle \mathrm{lip}(L) \cdot \lVert \bbeta_{\mx} \rVert_{*}  \leq \lambda\\
            & \displaystyle \mathrm{lip}(L) \leq \lambda \kappa_{\my}\\
            & \multicolumn{2}{l}{\displaystyle \mspace{-8mu} \bbeta = (\beta_0, \bbeta_\mx, \bbeta_\mz) \in \mR^{1 + M_\mx + M_\mz}, \;\; \lambda \in \mR_+, \;\; \bs \in \mR^N_+.}
        \end{array}
    \end{myequation}
\end{prop}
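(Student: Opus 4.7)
My plan is to start from the formulation in Observation~\ref{obs-derivation} and explicitly evaluate its inner supremum for each index $(n,\bz)$. Substituting the regression loss $l_{\bbeta}(\bx,\bz,y) = L(\beta_0 + \bbeta_\mx{}^\top \bx + \bbeta_\mz{}^\top \bz - y)$, the output metric $d_\my(y,y') = |y-y'|$, and the unbounded supports $\mX = \mR^{M_\mx}$, $\mY = \mR$, and making the change of variables $\Delta \bx = \bx - \bx^n$, $\Delta y = y - y^n$ together with the abbreviation $e^n_{\bz} := \beta_0 + \bbeta_\mx{}^\top \bx^n + \bbeta_\mz{}^\top \bz - y^n$, the inner supremum in~\eqref{eq-dro-gen2} rewrites as
\begin{equation*}
    \Phi(e^n_{\bz}) \; := \; \sup_{\Delta \bx \in \mR^{M_\mx},\, \Delta y \in \mR} \bigl\{ L(e^n_{\bz} + \bbeta_\mx{}^\top \Delta \bx - \Delta y) - \lambda \lVert \Delta \bx \rVert - \lambda \kappa_\my |\Delta y| \bigr\}.
\end{equation*}

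The first step I would take is to collapse this two-dimensional optimization into a one-dimensional one by introducing the scalar $t = \bbeta_\mx{}^\top \Delta \bx - \Delta y$ and minimizing out the penalty for each fixed $t$. The inner problem $\min\{\lambda \lVert \Delta \bx \rVert + \lambda \kappa_\my |\Delta y| : \bbeta_\mx{}^\top \Delta \bx - \Delta y = t\}$ is a conic LP in disguise; via Fenchel duality, using the dual-norm identity, it evaluates to $c \cdot |t|$ with $c := \min(\lambda/\lVert \bbeta_\mx \rVert_*,\, \lambda \kappa_\my)$, adopting the convention $1/0 = +\infty$ to absorb the boundary case $\bbeta_\mx = \mathbf{0}$. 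An elementary comparison of the two canonical feasible points $(\Delta\bx,\Delta y) = (\mathbf{0}, -t)$ and $\bbeta_\mx{}^\top \Delta\bx = t$ with $\Delta y = 0$ already yields $c \cdot |t|$ as an upper bound; strong duality (Slater's condition is trivially satisfied) confirms tightness. Hence $\Phi(e^n_{\bz}) = \sup_{t \in \mR}\{L(e^n_{\bz}+t) - c|t|\}$.

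The second step invokes the convex Lipschitz structure of $L$. The bound $L(e^n_{\bz}+t) \leq L(e^n_{\bz}) + \mathrm{lip}(L)|t|$ implies that whenever $c \geq \mathrm{lip}(L)$ we have $\Phi(e^n_{\bz}) \leq L(e^n_{\bz}) = l_{\bbeta}(\bx^n,\bz,y^n)$, with equality attained at $t = 0$. Conversely, if $c < \mathrm{lip}(L)$, I would select a subgradient $g$ of $L$ at some point whose modulus lies strictly between $c$ and $\mathrm{lip}(L)$---such a point exists by the very definition of the Lipschitz modulus---and drive $|t| \to \infty$ along the sign of $g$ to obtain $\Phi(e^n_{\bz}) = +\infty$. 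Translating $c \geq \mathrm{lip}(L)$ into its two components recovers exactly the linear constraints $\mathrm{lip}(L)\lVert \bbeta_\mx \rVert_* \leq \lambda$ and $\mathrm{lip}(L) \leq \lambda\kappa_\my$ appearing in~\eqref{eq-prop-reg-conv}; since violating either renders the semi-infinite constraint in~\eqref{eq-dro-gen2} infeasible, imposing them as hard constraints is without loss of generality. Substituting $\Phi(e^n_{\bz}) = l_{\bbeta}(\bx^n,\bz,y^n)$ back into~\eqref{eq-dro-gen2} then yields the claimed reformulation~\eqref{eq-prop-reg-conv}. I expect the main obstacle to lie in the careful handling of the non-differentiability of $L$ in the subgradient argument and the degenerate case $\bbeta_\mx = \mathbf{0}$ in the duality step; structurally the proof parallels that of Proposition~\ref{prop-der-class-conv}, with the added complication that the continuous output $\mY = \mR$ introduces an auxiliary variable $\Delta y$ that must also be eliminated.
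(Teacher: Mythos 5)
Your proposal is correct, and it reaches the same closed-form evaluation of the embedded supremum as the paper, but by a genuinely different decomposition. The paper's route (Lemma~\ref{lem-conv-reg-alt}) concatenates $(\bx,y)$ into a single vector, equips it with the compound norm $\lVert[\bx,y]\rVert_{\mathrm{comp}} = \lVert\bx\rVert + \kappa_\my\lvert y\rvert$, invokes Lemma~\ref{lem-conv-class} (itself a wrapper around Lemma~47 of \cite{JMLR:v20:17-633}) in that product space, and then computes the compound dual norm by Lagrangian duality (Lemma~\ref{lem-dual-norm}), yielding the finiteness condition $\mathrm{lip}(L)\cdot\max\{\lVert\bbeta_\mx\rVert_*,\,1/\kappa_\my\}\leq\lambda$. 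You instead marginalize onto the scalar $t=\bbeta_\mx{}^\top\Delta\bx-\Delta y$, observe that the penalty's partial minimization is the infimal convolution of $\lambda\lvert t_1\rvert/\lVert\bbeta_\mx\rVert_*$ and $\lambda\kappa_\my\lvert t_2\rvert$, hence equals $c\lvert t\rvert$ with $c=\min(\lambda/\lVert\bbeta_\mx\rVert_*,\lambda\kappa_\my)=\lambda/\max\{\lVert\bbeta_\mx\rVert_*,1/\kappa_\my\}$, and then re-derive the one-dimensional dichotomy $\sup_t\{L(e+t)-c\lvert t\rvert\}\in\{L(e),+\infty\}$ from first principles via the Lipschitz upper bound and a subgradient escape direction. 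The two conditions coincide, and your handling of the boundary cases ($\bbeta_\mx=\bm{0}$, $\lambda=0$, where non-constancy and convexity of $L$ force the supremum to $+\infty$) is sound and matches the paper's treatment in the proof of Lemma~\ref{lem-conv-class}. What your route buys is self-containedness: it avoids the compound-norm machinery and the citation of the external lemma, at the cost of re-proving the scalar case in situ; what the paper's route buys is modularity, since Lemma~\ref{lem-conv-class} is reused for the classification case (Proposition~\ref{prop-der-class-conv}) and Lemma~\ref{lem-dual-norm} isolates the only genuinely new computation for regression. For a fully rigorous write-up you would want to make explicit the elementary lower bound on the infimal convolution (splitting $t=t_1+t_2$ and bounding each piece by its dual-norm inequality), rather than leaning on Slater's condition, but this is a presentational rather than a substantive gap.
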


{\color{black} When $\kappa_\my = \infty$ and $\lambda > 0$, the third constraint in Proposition~\ref{prop-der-reg-conv} is redundant.}

{\color{black} Formulation~\eqref{eq-prop-reg-conv} emerges as a special case of our unified representation~\eqref{eq-most-vio-generic} if we set $\btheta = (\bbeta, \lambda)$, $\bsigma = \bs$, $f_0(\btheta,\bsigma) = \lambda \epsilon + \dfrac{1}{N} \sum_{n \in [N]} s_n$, $f_{ni}(e) = L(e)$, $g_{ni}(\btheta, \bm{\xi}^n_{-\mathbf{z}}; \bz) = \beta_0 + \bbeta_\mx{}^\top \bx^n + \bbeta_\mz{}^\top \bz - y^n$, $h_{ni}(\btheta, \bm{\xi}^n_{-\mathbf{z}}; d_{\mz} (\bz, \bz^n)) = \lambda \kappa_{\mz} d_{\mz}(\bz,\bz^n)$ and $\Theta = \{\btheta : \mathrm{lip}(L) \cdot \lVert \bbeta_{\mx}  \rVert_* \leq \lambda,\, \mathrm{lip}(L) \leq \lambda \kappa_{\my},\, \lambda \in \mR_+\}$. One verifies that this choice of $\btheta$, $\bsigma$, $f_0$, $f_{ni}$, $g_{ni}$, $h_{ni}$ and $\Theta$ satisfies the regularity conditions imposed on problem~\eqref{eq-most-vio-generic} in the main text.

\begin{coro}\label{coro-bounded-reg-conv}
    If $\bbeta$ in problem~\eqref{eq-prop-reg-conv} is restricted to a bounded hypothesis set $\mathcal{H}$, then $\lambda$ can be restricted to a bounded set as well.
\end{coro}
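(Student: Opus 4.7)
The plan is to show, for every fixed $\bbeta \in \mathcal{H}$, that the set of optimal values of $\lambda$ in~\eqref{eq-prop-reg-conv} is bounded by a constant depending only on $\mathcal{H}$, the loss $L$, and the data; this immediately justifies restricting $\lambda$ to a bounded set. First I would project out $\bs$: for any fixed $\bbeta$ and $\lambda \geq 0$, the tightest feasible slack is
\[
s_n(\lambda) \;=\; \max_{\bz \in \mC}\bigl\{\, l_\bbeta(\bx^n, \bz, y^n) \,-\, \lambda \kappa_{\mz} d_{\mz}(\bz,\bz^n) \,\bigr\},
\]
which is automatically non-negative, since taking $\bz = \bz^n$ already yields $l_\bbeta(\bx^n,\bz^n,y^n) \geq 0$. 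As a pointwise maximum of finitely many affine functions of $\lambda$, each $s_n(\cdot)$ is convex, piecewise linear, and non-increasing.

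Next I would define the threshold
\[
\Delta(\bbeta) \;=\; \max_{n \in [N]} \ \max_{\bz \in \mC,\ \bz \neq \bz^n} \frac{\bigl[\, l_\bbeta(\bx^n,\bz,y^n) - l_\bbeta(\bx^n,\bz^n,y^n) \,\bigr]_+}{\kappa_{\mz}\, d_{\mz}(\bz,\bz^n)},
\]
and observe that for any $\lambda \geq \Delta(\bbeta)$ the maximum defining $s_n(\lambda)$ is attained at $\bz = \bz^n$, so $s_n(\lambda) = l_\bbeta(\bx^n,\bz^n,y^n)$ is independent of $\lambda$. Consequently the reduced objective $\lambda \epsilon + \tfrac{1}{N}\sum_n s_n(\lambda)$ is strictly increasing on $[\Delta(\bbeta),\infty)$ (recall $\epsilon > 0$). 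Combined with the feasibility constraints $\lambda \geq \mathrm{lip}(L)\lVert \bbeta_{\mx}\rVert_*$ and $\lambda \geq \mathrm{lip}(L)/\kappa_{\my}$, this forces any optimal $\lambda$ to satisfy
\[
\lambda \;\leq\; \max\bigl\{\, \Delta(\bbeta),\ \mathrm{lip}(L)\lVert \bbeta_{\mx}\rVert_*,\ \mathrm{lip}(L)/\kappa_{\my} \,\bigr\}.
\]

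Finally I would show that the right-hand side above is uniformly bounded over $\bbeta \in \mathcal{H}$. Lipschitz continuity yields $L(e) \leq L(0) + \mathrm{lip}(L)|e|$, and the argument $\beta_0 + \bbeta_{\mx}{}^\top\bx^n + \bbeta_{\mz}{}^\top\bz - y^n$ of $L$ is bounded uniformly for $\bbeta$ ranging over bounded $\mathcal{H}$ and $\bz$ ranging over the finite set $\mC$, so $l_\bbeta(\bx^n,\bz,y^n)$ is uniformly bounded. Since $d_{\mz}(\bz,\bz^n) \geq 1$ whenever $\bz \neq \bz^n$, the denominator in $\Delta(\bbeta)$ is at least $\kappa_{\mz} > 0$, and hence $\Delta(\cdot)$ is uniformly bounded on $\mathcal{H}$; boundedness of $\mathrm{lip}(L)\lVert \bbeta_{\mx}\rVert_*$ is immediate. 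I do not anticipate a serious obstacle; the only minor care needed is the degenerate case $\Delta(\bbeta) = 0$ (when every alternative $\bz \neq \bz^n$ yields a smaller loss), in which the objective is strictly increasing on all of $[0,\infty)$ and the upper bound reduces to the feasibility lower bounds.
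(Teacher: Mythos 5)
Your proof is correct and follows essentially the same route as the paper, which establishes this corollary by adapting the argument of Proposition~\ref{prop-bounded-class-conv}: bound $\lambda$ below by the feasibility constraints and above by a data- and hypothesis-dependent threshold beyond which increasing $\lambda$ only inflates the term $\lambda\epsilon$, then note that this threshold is uniformly bounded over the bounded set $\mathcal{H}$ because $l_{\bbeta}$ is bounded there and $d_{\mz}(\bz,\bz^n) \geq 1$ for $\bz \neq \bz^n$. The only cosmetic difference is that your threshold $\Delta(\bbeta)$ subtracts $l_{\bbeta}(\bx^n,\bz^n,y^n)$ in the numerator and is therefore marginally sharper than the paper's $\overline{\lambda}_2$, but the logic is identical.
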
}

Proposition~\ref{prop-der-reg-conv} covers Wasserstein regression with a Huber loss,
\begin{equation*}
    L(e) = 
    \begin{cases}
        (1/2) \cdot e^2 & \text{if } \lvert e \rvert \leq \delta,\\
        \delta \cdot (\lvert e \rvert - (1/2) \cdot \delta) & \mbox{otherwise},
    \end{cases}
\end{equation*}
where $\delta \in \mR_+$ determines the boundary between the quadratic and the absolute loss, implying that $\mathrm{lip}(L) = \delta$. We provide the corresponding reformulation next.

\begin{coro}\label{coro-reg-con}
    For the Huber loss function, the first set of inequality constraints in~\eqref{eq-prop-reg-conv} can be reformulated as
    \begin{equation*}
        \left.
        \begin{array}{l}
            \dfrac{1}{2}( \beta_0 + \bbeta_{\mx}{}^\top \bx^n + \bbeta_{\mz}{}^\top \bz - y^n - p_{\bz,n})^2 + \delta p_{\bz,n} - \lambda \kappa_{\mz} d_{\mz} (\bz,\bz^n)  \leq s_n \\[4mm]
            \dfrac{1}{2}( \beta_0 + \bbeta_{\mx}{}^\top \bx^n + \bbeta_{\mz}{}^\top \bz - y^n - p_{\bz,n})^2 - \delta p_{\bz,n} - \lambda \kappa_{\mz} d_{\mz} (\bz,\bz^n)  \leq s_n
        \end{array}
        \right\}
        \forall n \in [N],\; \forall \bz \in \mC,
    \end{equation*}
    where $p_{\bz,n} \in \mR$ are auxiliary decision variables.
\end{coro}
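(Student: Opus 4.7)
The plan is to derive the reformulation by expressing the Huber loss as an infimal convolution that can be cast as a minimum over an auxiliary scalar variable. Concretely, I would establish the identity
\begin{equation*}
    L(e) \;=\; \min_{p \in \mathbb{R}} \; \left\{ \tfrac{1}{2}(e - p)^2 + \delta \lvert p \rvert \right\}
\end{equation*}
for the Huber loss with parameter $\delta$. This is the standard Moreau-envelope representation of $L$ and can be verified by solving the unconstrained one-dimensional minimization separately on $p \geq 0$ and $p \leq 0$: the optimizers are $p^\star = e - \delta$ when $e \geq \delta$, $p^\star = e + \delta$ when $e \leq -\delta$, and $p^\star = 0$ when $\lvert e \rvert \leq \delta$, and substituting these three cases recovers exactly the piecewise definition of $L$.

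Given this identity, the constraint $l_{\bbeta}(\bx^n,\bz,y^n) - \lambda \kappa_{\mz} d_{\mz}(\bz,\bz^n) \leq s_n$ becomes
\begin{equation*}
    \min_{p \in \mathbb{R}} \; \left\{ \tfrac{1}{2}(\beta_0 + \bbeta_{\mx}{}^\top \bx^n + \bbeta_{\mz}{}^\top \bz - y^n - p)^2 + \delta \lvert p \rvert \right\} \;\leq\; s_n + \lambda \kappa_{\mz} d_{\mz}(\bz,\bz^n).
\end{equation*}
A minimum over $p$ on the left-hand side is bounded above by a constant if and only if \emph{there exists} a $p$ meeting that bound, so I would introduce one auxiliary variable $p_{\bz,n} \in \mathbb{R}$ per pair $(n,\bz)$ and rewrite the constraint as the existence of $p_{\bz,n}$ satisfying $\tfrac{1}{2}(\beta_0 + \bbeta_{\mx}{}^\top \bx^n + \bbeta_{\mz}{}^\top \bz - y^n - p_{\bz,n})^2 + \delta \lvert p_{\bz,n} \rvert \leq s_n + \lambda \kappa_{\mz} d_{\mz}(\bz,\bz^n)$. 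Because these variables are free and appear in no other constraint, the existential quantifier can be absorbed into the outer minimization of problem~\eqref{eq-prop-reg-conv} without altering feasibility or the optimal value.

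Finally, I would linearize the absolute value via $\delta \lvert p_{\bz,n} \rvert = \max\{\delta p_{\bz,n},\, -\delta p_{\bz,n}\}$, so that the single nonsmooth inequality splits into the two smooth quadratic inequalities displayed in the statement of Corollary~\ref{coro-reg-con}. Rearranging $\lambda \kappa_{\mz} d_{\mz}(\bz,\bz^n)$ to the left-hand side yields exactly the claimed pair of constraints, quantified over all $n \in [N]$ and all $\bz \in \mC$, which completes the argument.

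No part of this derivation appears genuinely difficult: the only substantive step is verifying the Moreau-type identity for the Huber loss, and even that reduces to a routine case analysis. The main thing to be careful about is ensuring that the auxiliary variables $p_{\bz,n}$ may legitimately be pulled outside the constraint as additional decision variables of the overall minimization; this is immediate because the objective of problem~\eqref{eq-prop-reg-conv} does not depend on $p_{\bz,n}$ and because the constraints decouple across the index $(n,\bz)$.
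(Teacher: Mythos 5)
Your derivation is correct and follows essentially the same route as the paper, which simply defers to Corollary~5 of \cite{JMLR:v20:17-633}: that result likewise represents the Huber loss as the infimal convolution $L(e) = \min_{p}\{\tfrac{1}{2}(e-p)^2 + \delta\lvert p\rvert\}$, promotes the minimizing $p$ to an auxiliary decision variable (legitimate here because the minimum is attained, the objective is independent of $p_{\bz,n}$, and the constraints decouple across $(n,\bz)$), and splits $\delta\lvert p\rvert$ into the two displayed inequalities. Nothing further is needed.
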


We now provide a reformulation of problem~\eqref{eq-dro-gen2} without embedded maximizations when the loss function $L$ is piece-wise affine and convex.

\begin{prop}\label{prop-der-reg-aff}
    Consider a regression problem with a piece-wise affine and convex loss function $L(e) = \max_{j \in [J]} \{ a_j e + b_j \}$, and assume that $\mX \times \mY = \left\{ (\bx,y) \in \mR^{M_\mx+1} \, : \, \bm{C}_\mx \bx + \bm{c}_\my \cdot y \preccurlyeq_{\mathcal{C}} \bm{d} \right\}$ for some $\bm{C}_\mx \in \mR^{r \times {M_\mx}}$, $\bm{c}_\my \in \mR^{r}$, $\bm{d} \in \mR^{r}$ and proper convex cone $\mathcal{C} \subseteq \mR^{r}$. If this set admits a Slater point $(\bx^{\emph{s}}, y^{\emph{s}}) \in \mR^{M_\mx + 1}$ such that $\bm{C}_\mx \bx^{\emph{s}} + \bm{c}_\my \cdot  y^{\emph{s}} \prec_{\mathcal{C}} \bm{d}$, then problem~\eqref{eq-dro-gen2} is equivalent to
    \begin{myequation}\label{eq-prop-reg-aff}
        \begin{array}{l@{\quad}l@{\qquad}l}
            \displaystyle \mathop{\mathrm{minimize}}_{\bbeta,\lambda, \bs, \bq_{nj}} & \displaystyle \lambda \epsilon + \dfrac{1}{N} \sum_{n \in [N]} s_n \\[5mm]
            \displaystyle \mathrm{subject\;to} & \displaystyle \bq_{nj}^\top (\bm{d} - \bm{C}_{\mx} \bx^n - \bm{c}_\my \cdot y^n )  +  a_j \cdot ( \beta_0 +  \bbeta_{\mx}^\top \bx^n +  \bbeta_{\mz}{}^\top \bz - y^n ) - \lambda \kappa_{\mz} d_{\mz} (\bz,\bz^n) + b_j \leq s_n &\\
            & \mspace{362mu} \forall n \in [N], \; \forall j \in [J], \; \forall \bz \in \mC \\
            & \displaystyle \lVert a_j \cdot \bbeta_{\mx} - \bm{C}_\mx{}^\top \bq_{nj} \rVert_{*} \leq \lambda , \;\; \lvert -a_j - \bm{c}_\my^\top \bq_{nj} \rvert \leq \lambda \kappa_\my \mspace{57mu} \forall n \in [N] ,\; \forall j \in [J]\\
            & \multicolumn{2}{l}{\displaystyle \mspace{-8mu} \bbeta = (\beta_0, \bbeta_\mx, \bbeta_\mz) \in \mR^{1 + M_\mx + M_\mz}, \;\; \lambda \in \mR_+, \;\; \bs \in \mR^N_+} \\
            & \bq_{nj} \in \mathcal{C}^\star, \, n \in [N] \text{ and } j \in [J].
        \end{array}
    \end{myequation}
\end{prop}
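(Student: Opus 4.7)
The plan is to start from the equivalent formulation in Observation~\ref{obs-derivation} and reformulate the semi-infinite constraint for each fixed $n \in [N]$ and $\bz \in \mC$. Substituting $d_\my(y,y^n) = |y - y^n|$ and the piece-wise affine loss, the inner supremum becomes
\begin{equation*}
    \sup_{(\bx,y) \in \mX \times \mY} \max_{j \in [J]} \left\{ a_j (\beta_0 + \bbeta_\mx{}^\top \bx + \bbeta_\mz{}^\top \bz - y) + b_j - \lambda \lVert \bx - \bx^n \rVert - \lambda \kappa_\my |y - y^n| \right\}.
\end{equation*}
Interchanging the finite maximum over $j \in [J]$ with the supremum, the bound $\leq s_n + \lambda \kappa_\mz d_\mz(\bz,\bz^n)$ decouples into $J$ separate bounds, one for each $j$. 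After translating $(\bx,y) \mapsto (\bx^n + \Delta\bx, \, y^n + \Delta y)$, each of the $J$ suprema splits into an affine term evaluated at $(\bx^n, y^n)$ plus a concave maximization of $a_j \bbeta_\mx{}^\top \Delta\bx - a_j \Delta y - \lambda \lVert \Delta\bx \rVert - \lambda \kappa_\my |\Delta y|$ over $\{(\Delta\bx, \Delta y) : \bm{C}_\mx \Delta\bx + \bm{c}_\my \Delta y \preccurlyeq_{\mathcal{C}} \bm{d} - \bm{C}_\mx \bx^n - \bm{c}_\my y^n\}$.

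The next step is to apply conic Lagrangian duality to each of these $J$ inner suprema. Introducing a dual variable $\bq_{nj} \in \mathcal{C}^*$ for the conic constraint, the inner supremum becomes
\begin{equation*}
    \inf_{\bq_{nj} \in \mathcal{C}^*} \bq_{nj}^\top(\bm{d} - \bm{C}_\mx \bx^n - \bm{c}_\my y^n) + \sup_{\Delta\bx, \Delta y} \left\{ (a_j \bbeta_\mx - \bm{C}_\mx{}^\top \bq_{nj})^\top \Delta\bx + (-a_j - \bm{c}_\my^\top \bq_{nj}) \Delta y - \lambda \lVert \Delta\bx \rVert - \lambda \kappa_\my |\Delta y| \right\}.
\end{equation*}
The inner unconstrained supremum over $(\Delta\bx, \Delta y)$ is the sum of the Fenchel conjugates of $\lambda \lVert \cdot \rVert$ and $\lambda \kappa_\my |\cdot|$, which evaluates to $0$ if $\lVert a_j \bbeta_\mx - \bm{C}_\mx{}^\top \bq_{nj} \rVert_* \leq \lambda$ and $|-a_j - \bm{c}_\my^\top \bq_{nj}| \leq \lambda \kappa_\my$, and to $+\infty$ otherwise. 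Putting these pieces together and observing that the resulting $\bq_{nj}$ does not depend on $\bz$, we recover the constraint system of~\eqref{eq-prop-reg-aff} precisely, with the outer term $-\lambda \kappa_\mz d_\mz(\bz,\bz^n) + b_j + a_j(\beta_0 + \bbeta_\mx{}^\top \bx^n + \bbeta_\mz{}^\top \bz - y^n) + \bq_{nj}^\top(\bm{d} - \bm{C}_\mx \bx^n - \bm{c}_\my y^n) \leq s_n$.

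The main technical obstacle is to justify strong duality for each of the $J$ conic programs, which is exactly where the Slater assumption on $(\bx^{\mathrm{s}}, y^{\mathrm{s}})$ is invoked: by the translation above, the translated feasible set has $(\bx^{\mathrm{s}} - \bx^n, y^{\mathrm{s}} - y^n)$ as a Slater point, so the refined Slater condition for conic programs yields strong duality and attainment of the dual infimum. A secondary subtlety is that the outer infimum and the outer constraint ``$\leq s_n$'' must be combined correctly: since we may choose $\bq_{nj}$ to minimize the bound, it suffices to require the existence of \emph{some} feasible $\bq_{nj}$ satisfying the constraint, which justifies introducing $\bq_{nj}$ as decision variables of the outer problem. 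Finally, an analogue of Proposition~\ref{prop-bounded-class-aff} should be recorded to guarantee that $\lambda$ and the $\bq_{nj}$ can be confined to a bounded set whenever $\bbeta$ is, which is needed to invoke Theorem~\ref{thm:complexity} in the polynomial-time complexity analysis.
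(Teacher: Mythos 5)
Your proposal is correct and follows essentially the same route as the paper, which itself only states that the result ``can be proven along the lines of the proof of Theorem~4~(i) by Shafieezadeh-Abadeh et al.\ (2019)'': decouple the finite maximum over $j \in [J]$, dualize each conic-constrained inner supremum with a multiplier $\bq_{nj} \in \mathcal{C}^*$, invoke the Slater point for strong duality with dual attainment, and observe that the dual problem is independent of $\bz$ so a single $\bq_{nj}$ serves all $\bz \in \mC$. Your treatment of the quantifier order (existence of a feasible $\bq_{nj}$ versus the infimum) and of dual attainment addresses precisely the subtleties that the cited proof handles, so nothing is missing.
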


{\color{black} When $\kappa_\my = \infty$ and $\lambda > 0$, the constraints $\lvert -a_j - \bm{c}_\my^\top \bq_{nj} \rvert \leq \lambda \kappa_\my$ in Proposition~\ref{prop-der-reg-aff} are redundant.}

{\color{black} Formulation~\eqref{eq-prop-reg-aff} emerges as a special case of our unified representation~\eqref{eq-most-vio-generic} if we set $\mathcal{I} = [J]$, $\btheta = (\bbeta, \lambda, \bq_{ni})$, $\bsigma = \bs$, $f_0(\btheta,\bsigma) = \lambda \epsilon + \dfrac{1}{N} \sum_{n \in [N]} s_n$, $f_{ni}(e) = a_i e + b_i$, $g_{ni}(\btheta, \bm{\xi}^n_{-\mathbf{z}}; \bz) = \beta_0 + \bbeta_\mx{}^\top \bx^n + \bbeta_\mz{}^\top \bz - y^n$, $h_{ni}(\btheta, \bm{\xi}^n_{-\mathbf{z}}; d_{\mz} (\bz, \bz^n)) = - \bq_{ni}{}^\top (\bm{d} - \bm{C}_{\mx} \bx^n - \bm{c}_\my \cdot y^n ) + \lambda \kappa_{\mz} d_{\mz}(\bz,\bz^n)$ and $\Theta = \{\btheta : \lVert a_i \cdot \bbeta_{\mx} - \bm{C}{}^\top \bq_{ni}  \rVert_* \leq \lambda \, \forall (n,i) \in [N] \times \mathcal{I},\, \lvert -a_i - \bm{c}_\my^\top \bq_{ni} \rvert \leq \lambda \kappa_\my \, \forall (n,i) \in [N] \times \mathcal{I},\, \bq_{ni} \in \mathcal{C}^* \, \forall (n,i) \in [N] \times \mathcal{I},\, \lambda \in \mR_+\}$. One verifies that this choice of $\mathcal{I}$, $\btheta$, $\bsigma$, $f_0$, $f_{ni}$, $g_{ni}$, $h_{ni}$ and $\Theta$ satisfies the regularity conditions imposed on problem~\eqref{eq-most-vio-generic} in the main text.

\begin{coro}\label{coro-bounded-reg-aff}
    If $\bbeta$ in problem~\eqref{eq-prop-reg-aff} is restricted to a bounded hypothesis set $\mathcal{H}$, then $(\bq_{ni})_{n, i}$ and $\lambda$ can be restricted to a bounded set as well.
\end{coro}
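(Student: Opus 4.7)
The proof mirrors the argument for Proposition~\ref{prop-bounded-class-aff}, adapted to the joint Slater condition on $\mX \times \mY$ and the extra constraint $\lvert -a_j - \bm{c}_\my^\top \bq_{nj} \rvert \leq \lambda \kappa_\my$. I would proceed in two stages: first bounding $\lambda$ via a feasibility argument, and then bounding each $\bq_{nj}$ via the Slater point.

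\emph{Step 1 (bounding $\lambda$).} Fix any $\bar{\bbeta} \in \mathcal{H}$; since $\mathcal{H}$ is bounded, $\lVert \bar{\bbeta}_{\mx} \rVert_*$ is bounded by some constant. Set $\bar{\bq}_{nj} = \bm{0} \in \mathcal{C}^\star$ for all $n, j$. The Lipschitz-type constraints in~\eqref{eq-prop-reg-aff} then reduce to $\lvert a_j \rvert \lVert \bar{\bbeta}_{\mx} \rVert_* \leq \bar{\lambda}$ and $\lvert a_j \rvert \leq \bar{\lambda} \kappa_\my$, both of which are satisfied by choosing $\bar{\lambda} = \max_{j \in [J]} \lvert a_j \rvert \cdot \max\{ \lVert \bar{\bbeta}_{\mx} \rVert_*, \, 1/\kappa_\my \}$ (where $1/\kappa_\my = 0$ if $\kappa_\my = \infty$). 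For each $(n, j, \bz)$, the left-hand side of the semi-infinite constraint is bounded above by a data-dependent constant, so there exist $\bar{s}_n \geq 0$ making the solution $(\bar{\bbeta}, \bar{\lambda}, \bar{\bs}, \bar{\bq}_{nj})$ feasible with objective value $V := \bar{\lambda} \epsilon + \frac{1}{N}\sum_n \bar{s}_n < \infty$. Since $\bs \geq \bm{0}$ and $\epsilon > 0$, any optimal $\lambda^\star$ to~\eqref{eq-prop-reg-aff} satisfies $\lambda^\star \leq V / \epsilon$.

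\emph{Step 2 (bounding $\bq_{nj}$).} With $\bbeta$ bounded and $\lambda$ now bounded by Step 1, the Lipschitz-type constraints yield $\lVert \bm{C}_\mx{}^\top \bq_{nj} \rVert_* \leq \lambda + \lvert a_j \rvert \lVert \bbeta_\mx \rVert_*$ and $\lvert \bm{c}_\my^\top \bq_{nj} \rvert \leq \lambda \kappa_\my + \lvert a_j \rvert$, both bounded. Specializing the semi-infinite constraint of~\eqref{eq-prop-reg-aff} to $\bz = \bz^n$ (where $d_\mz(\bz^n, \bz^n) = 0$) and rearranging gives
\begin{equation*}
    \bq_{nj}^\top (\bm{d} - \bm{C}_\mx \bx^n - \bm{c}_\my y^n) \;\leq\; s_n - a_j \cdot (\beta_0 + \bbeta_\mx^\top \bx^n + \bbeta_\mz^\top \bz^n - y^n) - b_j,
\end{equation*}
whose right-hand side is bounded from Step 1 and the boundedness of $\bbeta$. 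Now decompose
\begin{equation*}
    \bq_{nj}^\top (\bm{d} - \bm{C}_\mx \bx^n - \bm{c}_\my y^n) \;=\; \bq_{nj}^\top \bm{r} + (\bm{C}_\mx^\top \bq_{nj})^\top (\bx^\mathrm{s} - \bx^n) + (\bm{c}_\my^\top \bq_{nj}) \cdot (y^\mathrm{s} - y^n),
\end{equation*}
where $\bm{r} = \bm{d} - \bm{C}_\mx \bx^\mathrm{s} - \bm{c}_\my y^\mathrm{s} \in \mathrm{int}\,\mathcal{C}$ by the Slater assumption. The last two terms are bounded in absolute value by Cauchy–Schwarz/Hölder together with the previously established bounds on $\lVert \bm{C}_\mx^\top \bq_{nj} \rVert_*$ and $\lvert \bm{c}_\my^\top \bq_{nj} \rvert$. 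Hence $\bq_{nj}^\top \bm{r}$ is bounded above by a constant $R$. Because $\bm{r} \in \mathrm{int}\,\mathcal{C}$, there exists $\alpha > 0$ such that $\bq^\top \bm{r} \geq \alpha \lVert \bq \rVert_*$ for every $\bq \in \mathcal{C}^\star$, which delivers the desired bound $\lVert \bq_{nj} \rVert_* \leq R / \alpha$.

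\emph{Main obstacle.} The key step is the Slater-based lower bound $\bq^\top \bm{r} \geq \alpha \lVert \bq \rVert_*$: one must verify that the interior point $\bm{r}$ in the primal constraint set translates, via conic duality, into a coercivity bound on the dual cone $\mathcal{C}^\star$. This is where the full-dimensionality of $\mathcal{C}$ and the strictness of the Slater point enter in a nontrivial way; the rest of the argument is essentially bookkeeping that combines the Lipschitz constraints with the semi-infinite constraint evaluated at $\bz = \bz^n$.
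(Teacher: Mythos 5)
Your proof is correct, but it takes a genuinely different route from the one the paper intends (the paper proves this corollary by mirroring Proposition~\ref{prop-bounded-class-aff}). That argument works in the opposite order and with different mechanisms: it first bounds the dual multipliers by a dominance argument --- since $\bq_{nj}{}^\top(\bm{d}-\bm{C}_\mx\bx^n - \bm{c}_\my y^n)\geq 0$, a non-zero $\bq_{nj}$ can only pay off by shrinking $\lambda$ through the dual-norm constraint, and an orthogonal decomposition of $\bq_{nj}$ into the row space and null space of the constraint matrix shows this can only happen inside a bounded set --- and only afterwards bounds $\lambda$ by exhibiting a threshold beyond which every semi-infinite constraint is dominated by $s_n\geq 0$. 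You instead bound $\lambda$ first through the objective (any optimal $\lambda$ satisfies $\lambda\epsilon\leq V$ for the value $V$ of the trivial feasible point with $\bq_{nj}=\bm{0}$, and $\epsilon>0$), and then bound $\bq_{nj}$ by evaluating the constraint at $\bz=\bz^n$ and invoking the coercivity inequality $\bq^\top\bm{r}\geq\alpha\lVert\bq\rVert_*$ for $\bq\in\mathcal{C}^*$ induced by the Slater point $\bm{r}=\bm{d}-\bm{C}_\mx\bx^{\mathrm{s}}-\bm{c}_\my y^{\mathrm{s}}\in\mathrm{int}\,\mathcal{C}$; that inequality is indeed standard (a Euclidean ball of radius $\rho$ around $\bm{r}$ inside $\mathcal{C}$ gives $\bq^\top\bm{r}\geq\rho\lVert\bq\rVert_2$, and norm equivalence finishes it), so your ``main obstacle'' is not an obstacle. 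Your route is arguably cleaner: it sidesteps the delicate null-space bookkeeping, exploits the Slater point that Proposition~\ref{prop-der-reg-aff} already assumes, and shows that \emph{all} optimal solutions --- not merely some --- lie in the constructed box. Two small points to make explicit: the bound on the right-hand side in Step~2 uses $s_n\leq NV$ at optimality, which follows from non-negativity of all objective terms; and your bound $\lvert\bm{c}_\my^\top\bq_{nj}\rvert\leq\lambda\kappa_\my+\lvert a_j\rvert$ degenerates when $\kappa_\my=\infty$, a regime the paper treats separately by dropping that constraint, where your decomposition would need to treat the combined map $(\bm{C}_\mx,\bm{c}_\my)$ jointly.
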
}

Proposition~\ref{prop-der-reg-aff} covers Wasserstein support vector regression with an $\tau$-insensitive loss function,
\begin{equation*}
    L(e) = \max \left\{ \lvert e \rvert - \tau, \; 0 \right\}
\end{equation*}
with robustness parameter $\tau \in \mR_+$, which is a piece-wise affine convex function with $J=3$, $a_1 = 1$, $b_1 = -\tau$, $a_2 = -1$, $b_2 = -\tau$, $a_3 = 0$ and $b_3 = 0$. It also covers Wasserstein quantile regression with a pinball loss function,
\begin{equation*}
    L(e) = \displaystyle \mathop{\mathrm{max}} \left\{ -\tau e, \; (1-\tau) \mspace{1mu} e \right\}
\end{equation*}
with robustness parameter $0 \leq \tau \leq 1$, which is a piece-wise affine convex function with $J=2$, $a_1 = -\tau$, $b_1 = 0$, $a_2 = 1-\tau$ and $b_2 = 0$. We provide the corresponding reformulations next.

\begin{coro}\label{coro-reg-aff}
    The first set of inequality constraints in~\eqref{eq-prop-reg-aff} can be reformulated as follows.
    \begin{enumerate}[(i)]
        \item For the $\tau$-insensitive loss function:
        \begin{equation*}
            \mspace{-55mu}
            \left.
            \begin{array}{l}
                \displaystyle \bt_{n1}{}^\top ( \bm{d}_1 - \bm{C}_1\bx^n  ) + \bv_{n1}{}^\top ( \bm{d}_2 - \bm{C}_2 y^n  ) + (\beta_0 + \bbeta_{\mx}{}^\top \bx^n + \bbeta_{\mz}{}^\top \bz -y^n) \\
                \displaystyle \mspace{380mu} -\tau - \lambda \kappa_{\mz} d_{\mz} (\bz,\bz^n) \leq s_n  \\
                \displaystyle \bt_{n2}{}^\top ( \bm{d}_1 - \bm{C}_1\bx^n  ) + \bv_{n2}{}^\top ( \bm{d}_2 - \bm{C}_2 y^n  ) - ( \beta_0 + \bbeta_{\mx}{}^\top \bx^n + \bbeta_{\mz}{}^\top \bz -y^n) \\
                \displaystyle \mspace{380mu} -\tau - \lambda \kappa_{\mz} d_{\mz} (\bz,\bz^n) \leq s_n
            \end{array}
            \right\} \;\;
            \forall n \in [N], \; \forall \bz \in \mC \\
        \end{equation*}
        \item For the pinball loss function:
        \begin{equation*}
            \mspace{-80mu}
            \left.
            \begin{array}{l}
                \displaystyle \bt_{n1}{}^\top ( \bm{d}_1 - \bm{C}_1\bx^n  ) + \bv_{n1}{}^\top ( \bm{d}_2 - \bm{C}_2 y^n  ) - \tau ( \beta_0 + \bbeta_{\mx}{}^\top \bx^n + \bbeta_{\mz}{}^\top \bz -y^n ) \\
                \displaystyle \mspace{465mu} - \lambda \kappa_{\mz} d_{\mz} (\bz,\bz^n) \leq s_n  \\
                \displaystyle \bt_{n2}{}^\top ( \bm{d}_1 - \bm{C}_1\bx^n  ) + \bv_{n2}{}^\top ( \bm{d}_2 - \bm{C}_2 y^n  ) + (1-\tau) ( \beta_0 + \bbeta_{\mx}{}^\top \bx^n + \bbeta_{\mz}{}^\top \bz -y^n ) \\
                \displaystyle \mspace{465mu} - \lambda \kappa_{\mz} d_{\mz} (\bz,\bz^n) \leq s_n
            \end{array}
            \right\} \;\;
            \forall n \in [N], \; \forall \bz \in \mC
        \end{equation*}
    \end{enumerate}
\end{coro}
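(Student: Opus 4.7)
\medskip
\noindent\textbf{Proof Proposal for Corollary~\ref{coro-reg-aff}.}
The plan is to specialize the reformulation~\eqref{eq-prop-reg-aff} from Proposition~\ref{prop-der-reg-aff} to the two loss functions at hand. Both corollary statements presuppose that the joint support factorizes as $\mX\times\mY = \{\bx : \bm{C}_1 \bx \preccurlyeq_{\mathcal{C}_1} \bm{d}_1\} \times \{y : \bm{C}_2 y \preccurlyeq_{\mathcal{C}_2} \bm{d}_2\}$ for proper cones $\mathcal{C}_1,\mathcal{C}_2$. Accordingly, I would first identify this factorized support as an instance of the generic support in Proposition~\ref{prop-der-reg-aff} by taking
\begin{equation*}
    \bm{C}_\mx = \begin{pmatrix}\bm{C}_1\\ \bm{0}\end{pmatrix},\quad
    \bm{c}_\my = \begin{pmatrix}\bm{0}\\ \bm{C}_2\end{pmatrix},\quad
    \bm{d} = \begin{pmatrix}\bm{d}_1\\ \bm{d}_2\end{pmatrix},\quad
    \mathcal{C} = \mathcal{C}_1\times\mathcal{C}_2.
\end{equation*}
Under this identification, the dual variable $\bq_{nj}\in\mathcal{C}^*$ naturally splits as $\bq_{nj}=(\bt_{nj},\bv_{nj})$ with $\bt_{nj}\in\mathcal{C}_1^*$ and $\bv_{nj}\in\mathcal{C}_2^*$, and the bilinear term in~\eqref{eq-prop-reg-aff} decomposes as
\begin{equation*}
    \bq_{nj}^\top(\bm{d} - \bm{C}_\mx \bx^n - \bm{c}_\my \cdot y^n)
    \;=\; \bt_{nj}^\top(\bm{d}_1 - \bm{C}_1 \bx^n) + \bv_{nj}^\top(\bm{d}_2 - \bm{C}_2 y^n).
\end{equation*}

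For the pinball loss, we have $J=2$ with $(a_1,b_1)=(-\tau,0)$ and $(a_2,b_2)=(1-\tau,0)$. Substituting these coefficients into the constraint family in~\eqref{eq-prop-reg-aff} and performing the split above yields precisely the two inequalities stated in part~(ii) of the corollary. For the $\tau$-insensitive loss, $J=3$ with $(a_1,b_1)=(1,-\tau)$, $(a_2,b_2)=(-1,-\tau)$ and $(a_3,b_3)=(0,0)$. Direct substitution of the first two pieces reproduces the two inequalities listed in part~(i), after the dual split.

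The only non-routine step is to show that for the $\tau$-insensitive loss the constraint corresponding to $j=3$ can be dropped without altering the feasible set. I would establish this by exhibiting $\bq_{n3}=\mathbf{0}\in\mathcal{C}^*$ as a feasible choice: since $a_3=0$, the two norm constraints in~\eqref{eq-prop-reg-aff} collapse to $0\leq\lambda$ and $0\leq\lambda\kappa_\my$, both of which hold. With $\bq_{n3}=\mathbf{0}$ and $b_3=0$, the $j=3$ inequality reduces to $-\lambda \kappa_\mz d_\mz(\bz,\bz^n)\leq s_n$, which is automatic because the left-hand side is non-positive while $s_n\geq 0$. Hence the $j=3$ constraint is implied for every feasible solution of the remaining system, and it may be suppressed. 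This elimination argument is the principal (minor) obstacle in the proof; the remainder is a pure substitution. No other steps are required, since the outer norm constraints and the semi-infinite structure over $\bz\in\mC$ carry over unchanged from Proposition~\ref{prop-der-reg-aff}.
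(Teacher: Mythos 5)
Your proposal is correct and follows essentially the same route as the paper, which simply defers to the analogous Corollaries~6 and~7 of \cite{JMLR:v20:17-633}: specialize the piece-wise affine coefficients $(a_j, b_j)$ in Proposition~\ref{prop-der-reg-aff}, split the dual variable along the factorized support, and discard the $j=3$ piece of the $\tau$-insensitive loss by noting that $\bq_{n3} = \bm{0}$ renders its constraints redundant given $s_n \geq 0$. Your explicit justification of that last elimination step is the only part the paper leaves implicit, and it is sound.
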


\newpage

\section*{\color{black} Appendix C: Proofs}

\begin{proof}[{\bf Proof of Observation~\ref{obs-derivation}}.]
    The statement follows from similar arguments as in the proof of Theorem~1 by \cite{NIPS2015}. Details are omitted for the sake of brevity.
\end{proof}

\begin{proof}[{\bf Proof of Proposition~\ref{prop-cutting}}]
    We first show that $\mathrm{LB}$ and $\mathrm{UB}$ constitute lower and upper bounds on the optimal value of~\eqref{eq-most-vio-generic} throughout the execution of the algorithm, and then we conclude that Algorithm~\ref{alg-cutting} terminates in finite time with an optimal solution $(\btheta^\star, \bsigma^\star)$ to problem~\eqref{eq-most-vio-generic}.

    Algorithm~\ref{alg-cutting} updates $\mathrm{LB}$ to $f_0 (\btheta^\star, \bsigma^\star)$ in each iteration of the while-loop. Since $(\btheta^\star, \bsigma^\star)$ is an optimal solution to the relaxation of problem~\eqref{eq-most-vio-generic} that only contains the constraints $\mathcal{W} \subseteq [N] \times \mathcal{I} \times \mC$, $\mathrm{LB}$ indeed constitutes a lower bound on the optimal value of~\eqref{eq-most-vio-generic}. Moreover, since no elements are ever removed from $\mathcal{W}$, the sequence of lower bounds $\mathrm{LB}$ is monotonic.

    To see that $\mathrm{UB}$ constitutes an upper bound on the optimal value of~\eqref{eq-most-vio-generic} throughout the execution of Algorithm~\ref{alg-cutting}, we claim that in each iteration, $(\btheta^\star, \bsigma^\star + \bm{\vartheta}^\star)$ constitutes a feasible solution to problem~\eqref{eq-most-vio-generic}. Indeed, we have $(\btheta^\star, \bsigma^\star) \in \Theta \times \mathbb{R}^N_+$ and $\bm{\vartheta}^\star \in\mathbb{R}^N_+$ by construction, while for all $n \in [N]$, $i \in \mathcal{I}$ and $\bm{z} \in \mC$, we have that
    \begin{equation*}
        f_{ni} (g_{ni} (\btheta^\star, \bm{\xi}^n_{-\mathbf{z}}; \bz)) - h_{ni} (\btheta^\star, \bm{\xi}^n_{-\mathbf{z}}; d_{\mz} (\bz, \bz^n))
        \; \leq \;
        \sigma_n^\star + \vartheta (n, i)
        \; \leq \;
        \sigma_n^\star + \vartheta (n, i(n))
        \; \leq \;
        \sigma_n^\star + \vartheta^\star_n,
    \end{equation*}
    that is, $(\btheta^\star, \bsigma^\star + \bm{\vartheta}^\star)$ is indeed feasible in problem~\eqref{eq-most-vio-generic}. Moreover, the sequence of upper bounds $\mathrm{UB}$ is monotonic by construction of the updates.

    To see that Algorithm~\ref{alg-cutting} terminates in finite time, finally, note that each iteration of the while-loop either adds a new constraint index $(n, i(n), \bm{z} (n, i(n)))$ to $\mathcal{W}$, or we have $\vartheta (n, i(n)) \leq 0$ for all $n \in [N]$. In the latter case, however, we have $\bm{\vartheta}^\star = \bm{0}$ and thus $\mathrm{LB} = \mathrm{UB}$ at the end of the iteration. The claim now follows from the fact that the index set $[N] \times \mathcal{I} \times \mC$ is finite.
\end{proof}

\begin{proof}[{\bf Proof of Theorem~\ref{thm-violation}}]
    For fixed $(\btheta^\star, \bm{\sigma}^\star)$, finding the most violated constraint index $\bm{z} (n, i)$ in constraint group $(n, i)$ amounts to solving the combinatorial optimization problem
    \begin{equation*}
        \begin{array}{ll}
            \underset{\bz}{\mathrm{maximize}} & \displaystyle f_{ni} (g_{ni} (\btheta^\star, \bm{\xi}^n_{-\mathbf{z}}; \bz)) - h_{ni} (\btheta^\star, \bm{\xi}^n_{-\mathbf{z}}; d_{\mz} (\bz, \bz^n)) - \sigma^\star_n \\
            \mathrm{subject\;to} & \bz \in \mC.
        \end{array}
    \end{equation*}
    We can solve this problem by solving the $K + 1$ problems
    \begin{equation*}
        \left[
        \begin{array}{ll}
            \underset{\bz}{\mathrm{maximize}} & \displaystyle f_{ni} (g_{ni} (\btheta^\star, \bm{\xi}^n_{-\mathbf{z}}; \bz)) - h_{ni} (\btheta^\star, \bm{\xi}^n_{-\mathbf{z}}; \delta) - \sigma^\star_n \\
            \mathrm{subject\;to} & d_{\mz} (\bz, \bz^n) = \delta \\
            & \bz \in \mC
        \end{array}
        \right]
        \quad \forall \delta \in [K] \cup \{ 0 \},
    \end{equation*}
    where each problem conditions on a fixed number $\delta$ of discrepancies between $\bm{z}$ and $\bm{z}^n$, and subsequently choosing any solution $\bm{z} (\delta)$ that attains the maximum optimal objective value among those $K + 1$ problems. Removing constant terms from those $K + 1$ problems, we observe that the $\delta$-th problem shares its set of optimal solutions with the problem
    \begin{equation*}
        \begin{array}{ll}
            \underset{\bz}{\mathrm{maximize}} & \displaystyle f_{ni} (g_{ni} (\btheta^\star, \bm{\xi}^n_{-\mathbf{z}}; \bz)) \\
            \mathrm{subject\;to} & d_{\mz} (\bz, \bz^n) = \delta \\
            & \bz \in \mC.
        \end{array}
    \end{equation*}
    Since the outer function $f_{ni}$ in the objective function is convex, the objective function is maximized whenever the inner function $g_{ni}$ in the objective function is either maximized or minimized. We thus conclude that the $\delta$-th problem is solved by solving the two problems
    \begin{equation*}
        \left[
        \begin{array}{ll}
            \underset{\bz}{\mathrm{maximize}} & \displaystyle \mu \cdot g_{ni} (\btheta^\star, \bm{\xi}^n_{-\mathbf{z}}; \bz) \\
            \mathrm{subject\;to} & d_{\mz} (\bz, \bz^n) = \delta \\
            & \bz \in \mC
        \end{array}
        \right]
        \quad \forall \mu \in \{ \pm 1 \}
    \end{equation*}
    and subsequently choosing the solution $\bm{z} (\mu, \delta)$ that attains the larger value $f_{ni} (g_{ni} (\btheta^\star, \bm{\xi}^n_{-\mathbf{z}}; \bz (\mu, \delta))$ among those two solutions (with ties broken arbitrarily). Fixing $\mu$ to either value, adopting the notation for $\bm{w}$ and $w_0$ of Algorithm~\ref{alg-most-violated} and ignoring constant terms, we can write the problem as
    \begin{equation*}
        \begin{array}{ll}
            \underset{\bz}{\mathrm{maximize}} & \displaystyle \mu \cdot \Big[ \sum_{m \in [K]} \bm{w}_m{}^\top \bz_m \Big] \\[5mm]
            \mathrm{subject\;to} & d_{\mz} (\bz, \bz^n) = \delta \\
            & \bz \in \mC.
        \end{array}
    \end{equation*}
    The rectangularity of $\mC$ implies that the decisions of this problem admit a decomposition into the selection $\mathcal{M} \subseteq [K]$ of $\delta$ discrete features $m \in [K]$ along which $\bm{z}_m$ differs from $\bm{z}^n_m$ and, for those features $m \in [K]$ where $\bm{z}_m \neq \bm{z}^n_m$, the choice of $\bm{z}_m \in \mathbb{Z} (k_m) \setminus \{ \bm{z}^n_m \}$:
    \begin{equation*}
        \begin{array}{ll}
            \underset{\mathcal{M}}{\mathrm{maximize}} & \displaystyle \max_{\bz} \left\{
                \mu \cdot \Big[ \sum_{m \in [K]} \bm{w}_m{}^\top \bz_m \Big]
                \, : \,
                \left[
                \begin{array}{ll}
                    \bz_m \in \mathbb{Z} (k_m) \setminus \{ \bz^n_m \} & \forall m \in \mathcal{M} \\
                    \bz_m = \bz^n_m & \forall m \in [K] \setminus \mathcal{M}
                \end{array}
                \right]
            \right\} \\[7mm]
            \mathrm{subject\;to} & \mathcal{M} \subseteq [K], \;\; | \mathcal{M} | = \delta.
        \end{array}
    \end{equation*}
    Noticing that the embedded maximization problem decomposes along the discrete features $m \in [K]$, we can adopt the notation for $\bz^\star_m$ of Algorithm~\ref{alg-most-violated} to obtain the equivalent formulation
    \begin{equation*}
        \begin{array}{ll}
            \underset{\mathcal{M}}{\mathrm{maximize}} & \displaystyle \Big[ \sum_{m \in \mathcal{M}} \mu \cdot \bm{w}_m{}^\top \bz^\star_m \Big] + \Big[ \sum_{m \in [K] \setminus \mathcal{M}} \mu \cdot \bm{w}_m{}^\top \bz^n_m \Big] \\[5mm]
            \mathrm{subject\;to} & \mathcal{M} \subseteq [K], \;\; | \mathcal{M} | = \delta.
        \end{array}
    \end{equation*}
    The two summations in the objective function of this problem admit the reformulation
    \begin{equation*}
        \Big[ \sum_{m \in \mathcal{M}} \mu \cdot \bm{w}_m{}^\top \bz^\star_m \Big] + \Big[ \sum_{m \in [K] \setminus \mathcal{M}} \mu \cdot \bm{w}_m{}^\top \bz^n_m \Big]
        \; = \;
        \Big[ \sum_{m \in [K]} \mu \cdot \bm{w}_m{}^\top \bz^n_m \Big] + \Big[ \sum_{m \in \mathcal{M}} \mu \cdot \bm{w}_m{}^\top (\bz^\star_m - \bz^n_m) \Big],
    \end{equation*}
    and ignoring constant terms once more simplifies our optimization problem to
    \begin{equation*}
        \begin{array}{ll}
            \underset{\mathcal{M}}{\mathrm{maximize}} & \displaystyle \sum_{m \in \mathcal{M}} \mu \cdot \bm{w}_m{}^\top (\bz^\star_m - \bz^n_m) \\[5mm]
            \mathrm{subject\;to} & \mathcal{M} \subseteq [K], \;\; | \mathcal{M} | = \delta.
        \end{array}
    \end{equation*}
    This problem is solved by identifying $\mathcal{M}$ with the indices of the $\delta$ largest elements of the sequence $\mu \cdot \bm{w}_1{}^\top (\bz^\star_1 - \bz^n_1)$, \ldots, $\mu \cdot \bm{w}_K{}^\top (\bz^\star_K - \bz^n_K)$, and this problem can be solved by a simple sorting algorithm. One readily verifies that Algorithm~\ref{alg-most-violated} adopts the solution approach just described to determine a maximally violated constraint index $\bz (n, i)$.

    The runtime of Algorithm~\ref{alg-most-violated}, finally, is dominated by determining the $2K$ maximizers $\bz^\star_m$, $m \in [K]$ and $\mu \in \{ \pm 1 \}$, which takes time $\mathcal{O} (M_{\mathbf{z}})$ due to the one-hot encoding employed by $\mC$, sorting the $2K$ values $\mu \cdot \bm{w}_m{}^\top (\bz^\star_m - \bz^n_m)$, which takes time $\mathcal{O} (K \log K)$, as well as determining a maximally violated constraint among the $2K + 2$ candidates $\bm{z} \in \mathcal{Z}$, which takes time $\mathcal{O} (KT)$.
\end{proof}

{\color{black}
\begin{proof}[\bf Proof of Theorem~\ref{thm:complexity}.]
    Recall from \citet[Corollary~4.2.7]{GLS88:geom_algos} that problem~\eqref{eq-most-vio-generic} can be solved to $\delta$-accuracy in polynomial time if \emph{(i)} the problem admits a polynomial time weak separation oracle and if \emph{(ii)} the feasible region of the problem is a circumscribed convex body. We prove both of these properties next.

    Fix a convex and compact set $\mathcal{K} \subseteq \mR^n$ and a rational number $\delta > 0$. \citet[Definition~2.1.13]{GLS88:geom_algos} define a \emph{weak} separation oracle for $\mathcal{K}$ as an algorithm which for any vector $\bq \in \mR^n$ either confirms that $\bq \in \mathcal{S}(\mathcal{K},\delta)$, where $\mathcal{S}(\mathcal{K},\delta) = \left\{ \bm{r} \in \mR^{n} \, : \, \lVert \bm{r} - \bm{r}' \rVert_{2} \leq \delta \text{ for some } \bm{r}' \in \mathcal{K}\right\}$ is the $\delta$-enclosure around $\mathcal{K}$ (\emph{i.e.}, $\bq$ is \emph{almost} in $\mathcal{K}$), or finds a vector $\bm{c} \in \mR^n$ with $\lVert \bm{c} \rVert_\infty = 1$ such that $\bm{c}^\top \bp \leq \bm{c}^\top \bq + \delta$ for all $\bp \in \mathcal{S}(\mathcal{K},-\delta)$, where $\mathcal{S}(\mathcal{K},-\delta) = \left\{ \bm{r} \in \mathcal{K} \, : \, \mathcal{S}(\{ \bm{r} \},\delta) \subseteq \mathcal{K} \right\}$ is the $\delta$-interior of $\mathcal{K}$ (\emph{i.e.}, $\bm{c}$ is an \emph{almost} separating hyperplane). We construct a weak separation oracle for problem \eqref{eq-most-vio-generic} as follows. Denote the feasible region of problem~\eqref{eq-most-vio-generic} by $\mathcal{K} \subseteq \Theta \times \mR^{N}_{+}$, and fix any $\bq' = (\btheta', \bsigma') \in \mR^{M_{\btheta}} \times \mR^{N}$. If $\sigma'_n < 0$ for any $n \in [N]$, then our algorithm returns the separating hyperplane $\bm{c}^\top = (\bm{0}^\top \, -\mathbf{e}_n{}^\top)$. If $\btheta' \notin \Theta$, then we can augment the weakly separating hyperplane $\bm{c} \in \mR^{M_{\btheta}}$ for the set $\Theta$ to a weakly separating hyperplane $(\bm{c}^\top \; \bm{0}^\top)$ for problem~\eqref{eq-most-vio-generic}. Otherwise, we leverage Algorithm~\ref{alg-most-violated} to obtain a \emph{strong} separation oracle for $\mathcal{K}$, which \citet[Definition~2.1.4]{GLS88:geom_algos} define as an algorithm which for any vector $\bq \in \mR^n$ either confirms that $\bq \in \mathcal{K}$ or finds a vector $\bm{c} \in \mR^n$ such that $\bm{c}^\top \bp < \bm{c}^\top \bq$ for all $\bp \in \mathcal{K}$. To this end, we execute Algorithm~\ref{alg-most-violated} for all constraint group indices $(n,i) \in [N] \times \mathcal{I}$ to identify most violated constraint indices $\bz(n,i)$ along with the constraint violations $\vartheta(n,i)$. If $\vartheta(n,i) \leq 0$ for all $(n, i) \in [N] \times \mathcal{I}$, then $\bm{q}'$ is feasible in problem~\eqref{eq-most-vio-generic} and our oracle terminates. If $\vartheta(n^\star,i^\star) > 0$ for some $(n^\star, i^\star) \in [N] \times \mathcal{I}$, on the other hand, then our oracle computes a subgradient $\bm{c}$ of the function
    \begin{equation*}
        r (\btheta, \bsigma)
        \;\; = \;\;
        f_{n^\star i^\star} (g_{n^\star i^\star} (\btheta, \bm{\xi}^{n^\star}_{-\mathbf{z}}; \bz(n^\star, i^\star))) - h_{n^\star i^\star} (\btheta, \bm{\xi}^{n^\star}_{-\mathbf{z}}; d_{\mz} (\bz(n^\star,i^\star), \bz^{n^\star})) - \sigma_{n^\star}
    \end{equation*}
    at $(\btheta, \bsigma) = (\btheta', \bsigma')$. We have
    \begin{equation*}
        \bm{c}^\top (\btheta, \bsigma)
        \;\; \leq \;\;
        \bm{c}^\top (\btheta', \bsigma') + r(\btheta, \bsigma) - r(\btheta', \bsigma')
        \;\; < \;\;
        \bm{c}^\top (\btheta', \bsigma')
        \qquad \forall (\btheta, \bsigma) \in \mathcal{K},
    \end{equation*}
    where the first inequality is due to the definition of subgradients and the fact that $r$ is convex, and the second inequality holds since $r (\btheta', \bsigma') > 0$ while $r (\btheta, \bsigma) \leq 0$ for all $(\btheta, \bsigma) \in \mathcal{K}$. In conclusion, $\bm{c}$ is a strong separating hyperplane for $(\btheta', \bsigma')$ as desired. Note that by construction, a strong separation oracle is also a weak separation oracle. Finally, the assumptions in the statement of our theorem, together with Theorem~\ref{thm-violation}, \mbox{imply that our oracle runs in polynomial time.}

    We next turn to the claim that the feasible region $\mathcal{K} \subseteq \Theta \times \mR^{N}_{+}$ of problem~\eqref{eq-most-vio-generic} is a circumscribed convex body. According to \citet[Definition~2.1.16]{GLS88:geom_algos}, this is the case whenever $\mathcal{K}$ is a finite-dimensional, full-dimensional, closed and convex subset of a ball whose finite radius we can specify. By construction, $\mathcal{K}$ is finite-dimensional, closed and convex. Moreover, $\mathcal{K}$ is full-dimensional since $\Theta$ is full-dimensional by construction and the constraints have an epigraphical structure. To see that $\mathcal{K}$ can be circumscribed by a ball whose finite radius we can specify, we note that $\Theta$ is bounded by assumption and $\bsigma$ is non-negative by construction. Since the objective function $f_0$ in~\eqref{eq-most-vio-generic} is non-decreasing in $\bsigma$, we can without loss of generality include in~\eqref{eq-most-vio-generic} the additional constraints $\sigma_n \leq \overline{\sigma}_n$ for
    \begin{equation*}
        \overline{\sigma}_n
        \; = \;
        \max_{\btheta \in \Theta} \; \max_{i \in \mathcal{I}} \; \max_{\bz \in \mC} \left\{
        f_{ni} (g_{ni} (\btheta, \bm{\xi}^n_{-\mathbf{z}}; \bz)) - h_{ni} (\btheta, \bm{\xi}^n_{-\mathbf{z}}; d_{\mz} (\bz, \bz^n))
        \right\},
        \qquad n \in [N].
    \end{equation*}
    Note that all $\overline{\sigma}_n$ are finite since $\Theta$ is compact, $\mathcal{I}$ and $\mC$ are finite sets and the objective function is continuous. We thus conclude that $\bsigma$ can be bounded as well, and thus $\mathcal{K}$ can indeed be circumscribed by a ball whose finite radius we can specify.
\end{proof}}

{\color{black}
\begin{proof}[{\bf Proof of Theorem~\ref{thm-equivalency}}]
    Fix any Wasserstein classification or regression instance as described in the statement of the theorem, fix any ambiguity radius $\epsilon > \kappa_{\mz}{K}^{1/p}$, any non-zero numbers ${M_\mx}$ of continuous and $K$ of discrete features, and denote the training set as $\{ \bm{\xi}^n \}_{n \in [N]}$ with $\bm{\xi}^n = (\bx^n, \bz^n, y^n)$, $n \in [N]$.
    Our proof proceeds in three steps. We first derive a closed-form expression for the objective function of the Wasserstein learning problem~\eqref{eq:the_mother_of_all_problems} at a judiciously chosen model $\hat{\bbeta}$. Our derivation will show that this objective function constitutes the sum of the empirical loss $N^{-1} \cdot \sum_{n \in [N]} l_{\hat{\bbeta}}(\bm{\xi}^n)$ and a function $h_{\hat{\bbeta}} (\{\bxi^n\}_{n \in [N]})$. We then construct two sets of data points $\{\hat{\bxi}^n\}_{n \in [N]}$ and $\{\check{\bxi}^n\}_{n \in [N]}$ at which $h_{\hat{\bbeta}} (\{\hat{\bxi}^n\}_{n \in [N]}) \neq h_{\hat{\bbeta}} (\{\check{\bxi}^n\}_{n \in [N]})$, showing that $h_{\hat{\bbeta}} (\{\bxi^n\}_{n \in [N]})$ exhibits a dependence on the dataset $\{\bxi^n\}_{n \in [N]}$ that cannot be replicated by any data-agnostic regularizer $\mathfrak{R} (\hat{\bbeta})$.

    Fix the model $\hat{\bbeta} = (\hat{\beta}_0, \hat{\bbeta}_\mx, \hat{\bbeta}_\mz)$ with $\hat{\beta}_0 \in \mathbb{R}$ and $\hat{\bbeta}_\mx \neq \bm{0}$ selected arbitrarily. Fix any discrete feature realization $\bm{z}^\star \in \mathbb{Z} \setminus \{ \bm{z}^1 \}$ as well as the discrete feature coefficients
    \begin{equation*}
        \hat{\beta}_{\mz,mi} = 
        \begin{cases}
			-\dfrac{2}{d'} \cdot {\kappa_{\mz} \mathrm{lip}(L)} \cdot \lVert \hat{\bbeta}_\mx \rVert_* \cdot d_\mz(\bz^\star,\bz^1) & \text{if} \ z^1_{mi} = 1, \\[2mm]
            \mspace{14mu} \dfrac{2}{d'} \cdot {\kappa_{\mz} \mathrm{lip}(L)} \cdot \lVert \hat{\bbeta}_\mx \rVert_* \cdot d_\mz(\bz^\star,\bz^1) & \text{otherwise}
    \end{cases}
    \qquad \text{for all } m \in [K] \text{ and } i \in [k_m],
    \end{equation*}
    where $d' = (\mathrm{d} / \mathrm{d} {e}) L(e) \; \big|_{e = x'}$ is the derivative of the loss function $L$ at any point $x' \in \mathbb{R}$ where the derivative does not vanish. Such points $x'$ exist due to Rademacher's theorem, which ensures that a Lipschitz continuous function is differentiable almost everywhere, as well as the assumption that the loss function $L$ is non-constant. We make two observations that we will leverage later on in this proof:
    \begin{enumerate}
        \item[\emph{(i)}] We have $d' \cdot \hat{\beta}_{\mz,mi} \cdot (z_{mi} - z^1_{mi}) \geq 0$ for all $\bz \in \mZ$, $m \in [K]$ and $i \in [k_m]$. Indeed, fix any $\bz \in \mZ$, $m \in [K]$ and $i \in [k_m]$. If $z^1_{mi} = 0$, then $d' \cdot \hat{\beta}_{\mz,mi} = 2 \cdot {\kappa_{\mz} \mathrm{lip}(L)} \cdot \lVert \hat{\bbeta}_\mx \rVert_* \cdot d_\mz(\bz^\star,\bz^1) \geq 0$ and $z_{mi} - z^1_{mi} \geq 0$. If $z^1_{mi} = 1$, on the other hand, then $d' \cdot \hat{\beta}_{\mz,mi} = -2 \cdot {\kappa_{\mz} \mathrm{lip}(L)} \cdot \lVert \hat{\bbeta}_\mx \rVert_* \cdot d_\mz(\bz^\star,\bz^1) \leq 0$ and $z_{mi} - z^1_{mi} \leq 0$. In particular, we have $d' \cdot \hat{\bbeta}_{\mz}{}^\top (\bz - \bz^1) \geq 0$ for all $\bz \in \mZ$.
        \item[\emph{(ii)}] We have $d' \cdot \hat{\bbeta}_\mz{}^\top (\bz^\star - \bz^1) > {\kappa_{\mz} \mathrm{lip}(L)} \cdot \lVert \hat{\bbeta}_\mx \rVert_* \cdot d_\mz(\bz^\star,\bz^1)$. Indeed, since $\bz^\star \neq \bz^1$, there is at least one $m^\star \in [K]$ and $i^\star \in [k_{m^\star}]$ where $z^\star_{m^\star i^\star} - z^1_{m^\star i^\star} \neq 0$, and thus $d' \cdot \hat{\beta}_{\mz,m^\star i^\star} \cdot (z^\star_{m^\star i^\star} - z^1_{m^\star i^\star}) = 2 \cdot {\kappa_{\mz} \mathrm{lip}(L)} \cdot \lVert \hat{\bbeta}_\mx \rVert_* \cdot d_\mz(\bz^\star,\bz^1) > {\kappa_{\mz} \mathrm{lip}(L)} \cdot \lVert \hat{\bbeta}_\mx \rVert_* \cdot d_\mz(\bz^\star,\bz^1)$. The claim now follows from the fact that $d' \cdot \hat{\beta}_{\mz,mi} \cdot (z_{mi} - z^1_{mi}) \geq 0$ for all other  $m \in [K]$ and $i \in [k_m]$ thanks to our previous observation~\emph{(i)}.
    \end{enumerate}

    Note that piece-wise affine loss functions are Lipschitz continuous. For our selected problem instance, Proposition~\ref{prop-der-class-conv} from Appendix~A therefore implies that the objective function of the  Wasserstein classification problem becomes
    \begin{equation*}
        \begin{array}{l@{\quad}l@{\qquad}l}
            \displaystyle \mathop{\mathrm{minimize}}_{\lambda, \bs} & \displaystyle \lambda \epsilon + \dfrac1{N} \sum_{n \in [N]} s_n \\[5mm]
            \displaystyle \mathrm{subject\;to} & 
            \left. \mspace{-10mu}
            \begin{array}{l}
                \displaystyle l_{\hat{\bbeta}}(\bx^n, \bz, y^n) - \lambda \kappa_{\mz} d_{\mz}(\bz,\bz^n)  \leq s_n \\
                \displaystyle l_{\hat{\bbeta}}(\bx^n, \bz, -y^n)  - \lambda \kappa_{\mz} d_{\mz}(\bz,\bz^n) - \lambda\kappa_{\my} \leq s_n
            \end{array}
            \right\}
            \forall n \in [N], \; \forall \bz \in \mC \\
            & \displaystyle \mathrm{lip}(L) \cdot \lVert \hat{\bbeta}_{\mx}  \rVert_* \leq \lambda \\
            & \displaystyle \lambda \in \mR_+, \;\; \bs \in \mR^N_+,
        \end{array}
    \end{equation*}
    and Proposition~\ref{prop-der-reg-conv} from Appendix~B implies that the objective function of the Wasserstein regression problem becomes
    \begin{equation*}
        \begin{array}{l@{\quad}l@{\qquad}l}
            \displaystyle \mathop{\mathrm{minimize}}_{\lambda, \bs} & \displaystyle \lambda \epsilon + \dfrac1{N} \sum_{n \in [N]} s_n \\[5mm]
            \displaystyle \mathrm{subject\;to} & \displaystyle l_{\hat{\bbeta}}(\bx^n, \bz, y^n) - \lambda \kappa_{\mz} d_{\mz} (\bz,\bz^n)  \leq s_n & \displaystyle \forall n \in [N],\; \forall \bz \in \mC  \\
            & \displaystyle \mathrm{lip}(L) \cdot \lVert \hat{\bbeta}_{\mx} \rVert_{*}  \leq \lambda\\
            & \displaystyle \mathrm{lip}(L) \leq \lambda \kappa_{\my}\\
            & \displaystyle \mspace{-8mu} \lambda \in \mR_+, \;\; \bs \in \mR^N_+.
        \end{array}
    \end{equation*}
    When $\kappa_{\my} \to \infty$, the second set of constraints in the classification problem and the third constraint in the regression problem become redundant since $\lambda \geq \mathrm{lip}(L) \cdot \lVert \hat{\bbeta}_{\mx} \rVert_{*} > 0$ because $\hat{\bbeta}_\mx \neq \bm{0}$. For all $n \in [N]$, we can then replace each $s_n$ with a maximum of the left-hand side of the first constraint set over all $\bz \in \mZ$ in either problem to obtain the unified formulation
    \begin{equation*}
        \begin{array}{l@{\quad}l@{\qquad}l}
            \displaystyle \mathop{\mathrm{minimize}}_{ \lambda} & \displaystyle \lambda \epsilon +  \dfrac1{N} \sum_{n \in [N]} \max_{\bz \in \mZ} \displaystyle \left\{ l_{\hat{\bbeta}}(\bx^n, \bz, y^n) - \lambda \kappa_{\mz} d_{\mz} (\bz,\bz^n) \right\} \\
            \displaystyle \mathrm{subject\;to} & \displaystyle \mathrm{lip}(L) \cdot \lVert \hat{\bbeta}_{\mx} \rVert_{*}  \leq \lambda\\
            & \displaystyle \lambda \in \mR_+
        \end{array}
    \end{equation*}
    of the objective function of both the classification and the regression problem. Note that the non-negativity of $s_n$ for $n \in [N]$ is preserved in the unified formulation since $d_{\mz} (\bz^n,\bz^n) = 0$ and $l_{\hat{\bbeta}}(\bx^n, \bz^n, y^n) \geq 0$ for all $n \in [N]$ by definition of the loss function $L$ that underlies $l_{\hat{\bbeta}}$. We claim that $\lambda ^\star = \mathrm{lip}(L) \cdot \lVert \hat{\bbeta}_{\mx} \rVert_{*}$ at optimality. Indeed, any increment $\Delta \lambda > 0$ in $\lambda$ will cause an increase of $ \Delta \lambda \cdot \epsilon$ and a maximum decrease of $\Delta \lambda \cdot \kappa_{\mz} K^{1/p}$ in the objective function, and we have $\epsilon > \kappa_{\mz} K^{1/p}$ by assumption.  Hence, the objective function of the Wasserstein classification and regression problem simplifies to
    \begin{align*}
        f_1 (\hat{\bbeta}, \{\bxi^n\}_{n\in[N]})
        \; & = \; 
        \lambda ^\star \epsilon + \dfrac1{N} \sum_{n \in [N]} \max_{\bz \in \mZ} \displaystyle \left\{ l_{\hat{\bbeta}}(\bx^n, \bz, y^n) - \lambda^\star \kappa_{\mz} d_{\mz} (\bz,\bz^n) \right\}\\
        \; & = \; \displaystyle \dfrac1{N} \sum_{n \in [N]} l_{\hat{\bbeta}}(\bm{\xi}^n) + h_{\hat{\bbeta}} (\{\bxi^n\}_{n \in [N]}),
    \end{align*}
    where 
    \begin{equation*}
        h_{\hat{\bbeta}} (\{\bxi^n\}_{n \in [N]})
        \; = \;
        \displaystyle \lambda ^\star \epsilon + \dfrac1{N} \sum_{n \in [N]} \max_{\bz \in \mZ} \left\{ l_{\hat{\bbeta}}(\bx^n, \bz, y^n) - l_{\hat{\bbeta}}(\bx^n, \bz^n, y^n) - \lambda^\star \kappa_{\mz} d_{\mz} (\bz,\bz^n) \right\}.
    \end{equation*}
    In contrast, the objective function of a generic regularized learning problem has the form
    \begin{equation*}
        f_2 (\hat{\bbeta}, \{\bxi^n\}_{n\in[N]})
        \; = \;
        \displaystyle \dfrac1{N} \sum_{n \in [N]} l_{\hat{\bbeta}}(\bm{\xi}^n)+ \mathfrak{R} (\hat{\bbeta}).
    \end{equation*}
    By construction, $\mathfrak{R}(\hat{\bbeta})$ does not vary with $\{\bxi^n\}_{n\in[N]}$. In contrast, we claim that $h_{\hat{\bbeta}} (\{\bxi^n\}_{n\in[N]})$ varies with $\{\bxi^n\}_{n\in[N]}$. To this end, we will construct two sets of data points $\{\hat{\bxi}^n\}_{n \in [N]}$ and $\{\check{\bxi}^n\}_{n \in [N]}$ at which $h_{\hat{\bbeta}} (\{\hat{\bxi}^n\}_{n \in [N]}) \neq h_{\hat{\bbeta}} (\{\check{\bxi}^n\}_{n \in [N]})$. The two datasets $\{\hat{\bxi}^n\}_{n \in [N]}$ and $\{\check{\bxi}^n\}_{n \in [N]}$ are identical to $\{\bxi^n\}_{n \in [N]}$ except for the realization ${\bx}^1$ of the continuous features in sample~$1$. In other words, we have $(\hat{\bx}^n, \hat{\bz}^n, \hat{y}^n) = (\check{\bx}^n, \check{\bz}^n, \check{y}^n) = (\bx^n, \bz^n, y^n)$ for all $n \in [N] \setminus 1$ as well as $(\hat{\bz}^1, \hat{y}^1) = (\check{\bz}^1, \check{y}^1) = (\bz^1, y^1)$.

    We select $\hat{\bx}^1$ such that the maximum in the definition of $h_{\hat{\bbeta}}$ is strictly positive at sample $n = 1$. To achieve this, we choose $\hat{\bx}^1$ such that $\hat{y}^1 \cdot [\hat{\beta}_0 + \hat{\bbeta}_\mx{}^\top \hat{\bx}^1 + \hat{\bbeta}_\mz{}^\top \hat{\bz}^1] = x'$ for classification problems and $\hat{\beta}_0 + \hat{\bbeta}_\mx{}^\top \hat{\bx}^1 + \hat{\bbeta}_\mz{}^\top \hat{\bz}^1 - \hat{y}^1 = x'$ for regression problems, respectively, where $x'$ was defined earlier as a point where the derivative of the loss function does not vanish. Note that this is always possible since $\hat{\bbeta}_\mx \neq \bm{0}$. We then have 
    \begin{align*}
        \max_{\bz \in \mZ} l_{\hat{\bbeta}}(\hat{\bx}^1, \bz, \hat{y}^1) - l_{\hat{\bbeta}}(\hat{\bx}^1, \hat{\bz}^1, \hat{y}^1) &- \lambda^\star \kappa_{\mz} d_{\mz} (\bz,\hat{\bz}^1)\\
        \; &\geq \; l_{\hat{\bbeta}}(\hat{\bx}^1, \bz^\star, \hat{y}^1) - l_{\hat{\bbeta}}(\hat{\bx}^1, \hat{\bz}^1, \hat{y}^1) - \lambda^\star \kappa_{\mz} d_{\mz} (\bz^\star,\hat{\bz}^1)\\
        \; &= \;
        L(x' + \hat{\bbeta}_\mz{}^\top (\bz^\star - \hat{\bz}^1)) - L(x') - \lambda^\star \kappa_{\mz} d_{\mz} (\bz^\star,\hat{\bz}^1)
        \\
        \; & \geq \;
        \hat{\bbeta}_\mz{}^\top (\bz^\star - \hat{\bz}^1) \cdot \dfrac{\mathrm{d}}{\mathrm{d}{e}} L(e) \; \big|_{e =x'} - \lambda^\star \kappa_{\mz} d_{\mz} (\bz^\star,\hat{\bz}^1) \\
        &= \;
        d' \cdot \hat{\bbeta}_\mz{}^\top (\bz^\star - \hat{\bz}^1) -  {\kappa_{\mz} \mathrm{lip}(L)} \cdot \lVert \hat{\bbeta}_\mx \rVert_* \cdot d_\mz(\bz^\star,\hat{\bz}^1) > 0,    
    \end{align*}
    where the first inequality holds since $\bz^\star \in \mZ$, the first identity uses the definitions of $l_{\hat{\bbeta}}$ and $\hat{\bx}^1$, the second inequality exploits the convexity of $L$, the second identity uses the definition of $d'$ as well as $\lambda^\star$, and the third inequality follows from our earlier observation \emph{(ii)}. Since the term inside the maximum in the definition of $h_{\hat{\bbeta}}$ is strictly positive for $\bz = \bz^\star$ at sample $n=1$, the maximum is guaranteed to be positive at sample $n=1$ as well.

    We choose $\check{\bx}^1$ such that the term inside the maximum in the definition of $h_{\hat{\bbeta}}$ is strictly negative at sample $n=1$ for all $\bz \in \mZ \setminus \{\check{\bz}^1\}$. Consider the case where $d' > 0$; the alternative case where $d' < 0$ follows from analogous arguments. Since $L$ is non-negative and convex, we have that 
    \begin{myequation}\label{eq:limit}
        \lim_{x \rightarrow -\infty} \dfrac{\mathrm{d}}{\mathrm{d}{e}} L(e) \; \big|_{e = x}\leq 0,
    \end{myequation}
    which in turn implies that for all $\bz \in \mZ \setminus \{\check{\bz}^1\}$, we have
    \begin{myequation}\label{eq:nonpos_in_the_limit2}
        \lim_{x \rightarrow -\infty} L(x + \hat{\bbeta}_\mz{}^\top (\bz - \check{\bz}^1)) - L(x)
        \; \leq \;
        \lim_{x \rightarrow -\infty} \hat{\bbeta}_\mz{}^\top (\bz - \check{\bz}^1) \cdot \dfrac{\mathrm{d}}{\mathrm{d}{e}} L(e) \; \big|_{e = x + \hat{\bbeta}_\mz{}^\top (\bz - \check{\bz}^1)}
        \; \leq \;
        0,
    \end{myequation}
    where the first inequality exploits the fact that the derivative of a convex function is non-decreasing and the second inequality holds because $d' \cdot\hat{\bbeta}_\mz{}^\top (\bz - \check{\bz}^1) \geq 0$ due to our earlier observation \emph{(i)} and the fact that $\lim_{x \rightarrow -\infty} \dfrac{\mathrm{d}}{\mathrm{d}{e}} L(e) \; \big|_{e = x + \hat{\bbeta}_\mz{}^\top (\bz - \check{\bz}^1)} = \lim_{x \rightarrow -\infty} \dfrac{\mathrm{d}}{\mathrm{d}{e}} L(e) \; \big|_{e = x} \leq 0$ by equation~\eqref{eq:limit}. Moreover, for $n=1$ and for all $\bz \in \mZ \setminus \{\check{\bz}^1\}$, we have
    \begin{align*}
        \lim_{\hat{\bbeta}_\mx{}^\top \check{\bx}^1 \rightarrow -\infty} l_{\hat{\bbeta}}(\check{\bx}^1, \bz, \check{y}^1) & - l_{\hat{\bbeta}}(\check{\bx}^1, \check{\bz}^1, \check{y}^1) - \lambda^\star \kappa_{\mz} d_{\mz} (\bz,\check{\bz}^1)\\
        \; & = \;
        \lim_{x'' \rightarrow - \infty} L(x'' + \hat{\bbeta}_\mz{}^\top (\bz - \check{\bz}^1)) - L(x'') - \lambda^\star \kappa_{\mz} d_{\mz} (\bz,\check{\bz}^1)\\
        \; & \leq \;
        - \lambda^\star \kappa_{\mz} d_{\mz} (\bz,\check{\bz}^1)
        \; < \;
        0,
    \end{align*}
    where the identity applies the change of variables $\check{y}^1 \cdot [\hat{\beta}_0 + \hat{\bbeta}_\mx{}^\top \check{\bx}^1 + \hat{\bbeta}_\mz{}^\top \check{\bz}^1] = x''$ for classification problems and $\hat{\beta}_0 + \hat{\bbeta}_\mx{}^\top \check{\bx}^1 + \hat{\bbeta}_\mz{}^\top \check{\bz}^1 - \check{y}^1 = x''$ for regression problems, respectively, the first inequality uses~\eqref{eq:nonpos_in_the_limit2}, and the last inequality is due to the fact that $\lambda^\star$, $\kappa_\mz$ and $d_{\mz} (\bz,\check{\bz}^1)$ are all strictly positive. Since the term inside the maximum in the definition of $h_{\hat{\bbeta}}$ is strictly negative at sample $n=1$ for all $\bz \in \mZ \setminus \{\check{\bz}^1\}$, the maximum is guaranteed to evaluate to $0$ for $n=1$. Given that the datasets $\{\hat{\bxi}^n\}_{n \in [N]}$ and $\{\check{\bxi}^n\}_{n \in [N]}$ are identical for all other samples $n \in [N] \setminus 1$, this implies that $h_{\hat{\bbeta}} (\{\hat{\bxi}^n\}_{n \in [N]}) \neq h_{\hat{\bbeta}} (\{\check{\bxi}^n\}_{n \in [N]})$ and confirms that $h_{\hat{\bbeta}} (\{\bxi^n\}_{n\in[N]})$ has a dependency on the dataset $\{\bxi^n\}_{n\in[N]}$ that cannot be replicated by any data-agnostic regularizer $\mathfrak{R} (\hat{\bbeta})$.
\end{proof}}

{\color{black}
\begin{proof}[{\bf Proof of Proposition~\ref{prop:concave}}]
    We prove the statement of the proposition in three steps. We first show that our revised ground metric $d_{\mathrm{u}}$ coincides with the original ground metric $d_{\mathrm{z}}$ over all pairs of discrete feature vectors $\bm{z}, \bm{z}' \in \mathbb{Z}$. This implies that the left-hand sides of the constraints indexed by $\bm{u} \in \mathbb{Z}$ in~\eqref{eq-dro-gen2-v2} 
    reduce to their counterparts in~\eqref{eq-dro-gen2}. In other words, problem~\eqref{eq-dro-gen2-v2} is a restriction of~\eqref{eq-dro-gen2} since it contains all of the constraints of~\eqref{eq-dro-gen2}. We then prove that $d_{\mathrm{u}} (\bm{u}, \bm{z}')$ is concave in $\bm{u}$ for every fixed $\bm{z}' \in \mathbb{Z}$. Our third step leverages this intermediate result to show that every feasible solution to our mixed-feature Wasserstein learning problem~\eqref{eq-dro-gen2} is also feasible in the bounded continuous-feature formulation~\eqref{eq-dro-gen2-v2}.
    
    In view of the first step, fix any $\bm{z}, \bm{z}' \in \mZ$ and observe that for any $m \in [K]$, we have
    \begin{align*}
        \dfrac{1}{2} \lVert \bm{z}_m - \bm{z}'_m \rVert_1 + \dfrac{1}{2} \lvert \mathbf{1}^\top(\bm{z}_m - \bm{z}'_m) \rvert 
        \;\; = \;\;
        \dfrac{1}{2} \Big\lVert \begin{psmallmatrix} \bm{z}_m \\ 1 - \mathbf{1}^\top \bm{z}_m \end{psmallmatrix} - \begin{psmallmatrix} \bm{z}'_m \\ 1 - \mathbf{1}^\top \bm{z}'_m \end{psmallmatrix} \Big\rVert_{1}
        \;\; = \;\;
        \begin{cases}
            0 & \text{if } \bm{z}_m = \bm{z}'_m, \\
            1 & \text{otherwise}.
        \end{cases}
    \end{align*}
    Here, the first identity applies basic algebraic manipulations, and the second identity holds since $(\bm{z}_m^\top, \, 1 - \mathbf{1}^\top \bm{z}_m)$ and $(\bm{z}^{\prime\top}_m, \, 1 - \mathbf{1}^\top \bm{z}'_m)$ are canonic basis vectors in $\mathbb{R}^{k_m}$. We thus conclude that
    \begin{equation*}
        d_{\mathrm{u}} (\bm{z}, \bm{z}')
        =
        \left(\sum_{m\in[K]} \frac{1}{2}\lVert \bm{z}_m - \bm{z}'_m \rVert_1 + \frac{1}{2} \lvert \mathbf{1}^\top(\bm{z}_m - \bm{z}'_m)\rvert \right)^{1/p}
        =
        \sum_{m \in [K]} (\indicate{\bm{z}_m \neq \bm{z}'_m})^{1/p}
        =
        d_{\mathrm{z}} (\bm{z}, \bm{z}')
    \end{equation*}
    as claimed.

    As for the second statement, fix any $\bm{z}' \in \mathbb{Z}$. We then find that
    \begin{align*}
        d_{\mathrm{u}} (\bm{u}, \bm{z}')
        \;\; = \;\;
        \Bigg( \frac{1}{2} \sum_{m \in [K]} \sum_{j \in [k_m - 1]} &\left[ \indicate{z'_{mj} = 0} \cdot u_{mj} + \indicate{z'_{mj} = 1} \cdot (1 - u_{mj}) \right]
        \;\; + \\
        \frac{1}{2} \sum_{m \in [K]} &\left[ \indicate{\bm{z}'_m = \bm{0}} \cdot \mathbf{1}^\top \bm{u}_m + \indicate{\bm{z}'_m \neq \bm{0}} \cdot (1 - \mathbf{1}^\top \bm{u}_m) \right] \Bigg)^{1/p},
    \end{align*}
    where the first summation evaluates the 1-norm differences between $\bm{u}_m$ and $\bm{z}'_m$ and the second summation computes the absolute values in $d_{\mathrm{u}}$, respectively. The above reformulation shows that the mapping $\bm{u} \mapsto d_{\mathrm{u}} (\bm{u}, \bm{z}')$ can be represented as $f(g(\bm{u}))$, where $f(x) = x^{1/p}$ and $g : \mathbb{U} \rightarrow \mathbb{R}$ is affine. It then follows from \S 3.2.2 of \cite{boyd2004convex} that the mapping $\bm{u} \mapsto d_{\mathrm{u}} (\bm{u}, \bm{z}')$ is concave.
    
    In view of the third statement, finally, fix any feasible solution $(\bm{\beta}, \lambda, \bm{s})$ to problem~\eqref{eq-dro-gen2}. Since the objective functions of~\eqref{eq-dro-gen2} and~\eqref{eq-dro-gen2-v2} coincide, we only need to show that $(\bm{\beta}, \lambda, \bm{s})$ satisfies the constraints of problem~\eqref{eq-dro-gen2-v2}. To this end, we observe that for all $n \in [N]$, we have
    \begin{equation*}
        \mspace{-10mu}
        \begin{array}{r@{\quad}l@{\qquad}l}
            & \displaystyle \mspace{-21mu} \sup_{(\bx, y) \in \mX \times \mY} \left\{ l_{\bbeta} (\bx, \bm{u}, y) - \lambda \lVert \bx - \bx^n \rVert  - \lambda \kappa_\my d_\my (y, y^n) \right\} - \lambda \kappa_\mz d_\mathrm{u} (\bm{u}, \bz^n) \leq s_n 
            & \displaystyle
            \forall \bm{u} \in \mathbb{U} \\
            \Longleftrightarrow & \displaystyle
            \sup_{\bm{u} \in \mathbb{U}} \left\{ l_{\bbeta} (\bx, \bm{u}, y) - \lambda \lVert \bx - \bx^n \rVert  - \lambda \kappa_\my d_\my (y, y^n) \right\} - \lambda \kappa_\mz d_\mathrm{u} (\bm{u}, \bz^n) \leq s_n 
            & \displaystyle 
            \forall (\bx, y) \in \mX \times \mY \\
            \Longleftrightarrow & \displaystyle
            \sup_{\bm{u} \in \mathbb{U}} \left\{ l_{\bbeta} (\bx, \bm{u}, y) - \lambda \kappa_\mz d_\mathrm{u} (\bm{u}, \bz^n) \} - \lambda \lVert \bx - \bx^n \rVert  - \lambda \kappa_\my d_\my (y, y^n) \right\}  \leq s_n 
            & \displaystyle
            \forall (\bx, y) \in \mX \times \mY \\
            \Longleftrightarrow & \displaystyle 
            \sup_{\bm{z} \in \mathbb{Z}} \left\{ l_{\bbeta} (\bx, \bm{z}, y) - \lambda \kappa_\mz d_\mathrm{u} (\bm{z}, \bz^n) \} - \lambda \lVert \bx - \bx^n \rVert  - \lambda \kappa_\my d_\my (y, y^n) \right\}  \leq s_n 
            & \displaystyle 
            \forall (\bx, y) \in \mX \times \mY \\
            \Longleftrightarrow & \displaystyle 
            \sup_{\bm{z} \in \mathbb{Z}} \left\{ l_{\bbeta} (\bx, \bm{z}, y) - \lambda \kappa_\mz d_\mathrm{z} (\bm{z}, \bz^n) \} - \lambda \lVert \bx - \bx^n \rVert  - \lambda \kappa_\my d_\my (y, y^n) \right\}  \leq s_n 
            & \displaystyle 
            \forall (\bx, y) \in \mX \times \mY \\
            \Longleftrightarrow & \displaystyle 
            \mspace{-21mu} \sup_{(\bx, y) \in \mX \times \mY} \left\{ l_{\bbeta} (\bx, \bm{z}, y) - \lambda \kappa_\mz d_\mathrm{z} (\bm{z}, \bz^n) \} - \lambda \lVert \bx - \bx^n \rVert  - \lambda \kappa_\my d_\my (y, y^n) \right\}  \leq s_n
            & \displaystyle 
            \forall \bm{z} \in \mathbb{Z}.
        \end{array}
    \end{equation*}
    Here, the first, the second and the last equivalences reorder terms. As for the third equivalence, recall that $d_{\mathrm{u}} (\bm{u}, \bm{z}^n)$ is concave in $\bm{u}$ for every fixed $\bm{z}^n \in \mathbb{Z}$. Thus, the expression inside the supremum in the third row is convex in $\bm{u}$, which implies that it attains its maximum at an extreme point of $\mathbb{U}$. The equivalence then follows from the fact that $\mathbb{Z}$ coincides with the extreme points of $\mathbb{U}$. The fourth equivalence, finally, leverages the first step of this proof, which showed that $d_{\mathrm{u}}$ and $d_{\mathrm{z}}$ agree over all pairs of discrete feature vectors $\bm{z}, \bm{z}' \in \mathbb{Z}$.
\end{proof}
}

The proof of Proposition~\ref{prop-der-class-conv} utilizes the following lemma, which we state and prove first.

\begin{lem}\label{lem-conv-class}
    Assume that the loss function $L$ is convex and Lipschitz continuous. For fixed $\bm{\alpha}, \bx^n \in \mR^{M_\mx}$, $\alpha_0 \in \mR$ and $\lambda \in \mR_+$, we have
\begin{myequation}\label{eq-lem-1}
    \sup_{\bx \in \mR^{M_\mx}} \, L( \bm{{\alpha}}{}^\top \bx + \alpha_0 ) - \lambda \lVert\bx - \bx^n\rVert = \begin{cases}
			L( \bm{\alpha}{}^\top \bx^n + \alpha_0 ) & \text{if} \ \mathrm{{lip}}(L) \cdot \lVert \bm{\alpha} \rVert_{*} \leq \lambda,\\
            + \infty & \mbox{\text{otherwise}}.
		 \end{cases}
\end{myequation}
\end{lem}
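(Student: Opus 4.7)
My plan is to substitute $\bm{v} = \bx - \bx^n$ and abbreviate $e^n = \bm{\alpha}^\top\bx^n + \alpha_0$, which reduces the problem to evaluating $\sup_{\bm{v}\in\mR^{M_\mx}} L(e^n + \bm{\alpha}^\top\bm{v}) - \lambda\lVert\bm{v}\rVert$. I would then split into the two cases of~\eqref{eq-lem-1}. For the first branch, when $\mathrm{lip}(L)\lVert\bm{\alpha}\rVert_*\leq\lambda$, I intend to chain the Lipschitz bound on $L$ with the defining inequality of the dual norm to obtain
\[
L(e^n + \bm{\alpha}^\top\bm{v}) - L(e^n) \;\leq\; \mathrm{lip}(L)\,|\bm{\alpha}^\top\bm{v}| \;\leq\; \mathrm{lip}(L)\,\lVert\bm{\alpha}\rVert_*\,\lVert\bm{v}\rVert \;\leq\; \lambda\lVert\bm{v}\rVert,
\]
which bounds the supremum above by $L(e^n)$. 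Since this upper bound is attained at $\bm{v}=\bm{0}$, the first branch of~\eqref{eq-lem-1} follows.

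For the second branch, $\mathrm{lip}(L)\lVert\bm{\alpha}\rVert_*>\lambda$ (which forces $\bm{\alpha}\neq\bm{0}$), I would exhibit a one-parameter family of feasible points along which the objective diverges. By the duality between $\lVert\cdot\rVert$ and $\lVert\cdot\rVert_*$, I will pick $\tilde{\bm{v}}$ with $\lVert\tilde{\bm{v}}\rVert=1$ and $\bm{\alpha}^\top\tilde{\bm{v}}=\lVert\bm{\alpha}\rVert_*$. Since $L$ is convex and Lipschitz on $\mR$, I will then select $\bar{e}\in\mR$ and a subgradient $g\in\partial L(\bar{e})$ satisfying $|g|>\lambda/\lVert\bm{\alpha}\rVert_*$. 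Setting $\bm{v}_t = t\,\mathrm{sgn}(g)\tilde{\bm{v}}$ for $t>0$ and applying the subgradient inequality, I obtain
\[
L(e^n + t\,\mathrm{sgn}(g)\lVert\bm{\alpha}\rVert_*) - \lambda t \;\geq\; L(\bar{e}) + g\bigl(e^n + t\,\mathrm{sgn}(g)\lVert\bm{\alpha}\rVert_* - \bar{e}\bigr) - \lambda t \;=\; \bigl(|g|\lVert\bm{\alpha}\rVert_* - \lambda\bigr)\,t + C,
\]
which tends to $+\infty$ as $t\to\infty$ since $|g|\lVert\bm{\alpha}\rVert_*>\lambda$.

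The main obstacle will be justifying that the subdifferential of a convex Lipschitz function on $\mR$ contains elements whose absolute value is arbitrarily close to $\mathrm{lip}(L)$. I would argue this via the identity $\mathrm{lip}(L) = \sup_e |L'_\pm(e)|$ for the one-sided derivatives of a convex $L$, combined with the monotonicity of $L'_\pm$, which ensures that the supremum is realized asymptotically at $+\infty$ or $-\infty$; picking $|\bar{e}|$ large in the corresponding direction then brings $|g|$ arbitrarily close to $\mathrm{lip}(L)$, as required.
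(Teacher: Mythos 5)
Your proof is correct, but it takes a genuinely different route from the paper's. The paper absorbs the constant $\alpha_0$ into the linear form via a change of variables $\bw = \bx + \bm{d}$ with $\bm{\alpha}^\top \bm{d} = \alpha_0$ and then invokes Lemma~47 of \cite{JMLR:v20:17-633} as a black box; since that change of variables requires $\bm{\alpha} \neq \bm{0}$ and the cited lemma requires $\lambda > 0$, the paper must patch the cases $\bm{\alpha} = \bm{0}$ and $\lambda = 0$ separately. You instead give a self-contained, first-principles argument: the upper bound in the finite branch follows from chaining the Lipschitz bound with the dual-norm inequality and is attained at $\bx = \bx^n$, while the divergence in the infinite branch is exhibited along a ray $\bm{v}_t = t\,\mathrm{sgn}(g)\tilde{\bm{v}}$ using a subgradient $g$ with $\lvert g \rvert \lVert \bm{\alpha} \rVert_* > \lambda$. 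Both edge cases are handled uniformly by your argument (if $\lambda = 0$ and $\bm{\alpha} \neq \bm{0}$ with $L$ non-constant, a nonzero subgradient exists and the ray argument still applies; if $\bm{\alpha} = \bm{0}$ the finite branch is trivially in force), so no separate patching is needed. The only points worth making explicit in a final write-up are (i) that the dual-norm supremum is attained by some $\tilde{\bm{v}}$ with $\lVert \tilde{\bm{v}} \rVert = 1$, which holds in finite dimensions by compactness of the unit ball, and (ii) the identity $\mathrm{lip}(L) = \sup_e \lvert L'_{\pm}(e) \rvert$ for convex univariate $L$, which you already flag; note that you only need \emph{some} subgradient exceeding $\lambda / \lVert \bm{\alpha} \rVert_*$ in modulus, so the asymptotic realization of the supremum at $\pm\infty$ is not actually required. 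What each approach buys: the paper's proof is shorter on the page because it outsources the core computation to an existing result; yours removes the external dependency and the case split, at the cost of reproving the underlying estimate.
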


Lemma~\ref{lem-conv-class} generalizes Lemma 47 of \cite{JMLR:v20:17-633} in that it includes a constant $\alpha_0$ in the argument of the loss function $L$ and that it extends to the case where $\lambda = 0$.

\begin{proof}[{\bf Proof of Lemma~\ref{lem-conv-class}}]
    Consider first the case where $\bm{\alpha} \neq \bm{0}$. We conduct the change of variables $\bw = \bx + \bm{d}$, where $\bm{d} \in \mR^{M_\mx}$ is any vector such that $\bm{\alpha}{}^\top \bm{d} = \alpha_0$. Note that $\bm{d}$ is guaranteed to exist since $\bm{\alpha} \neq \bm{0}$. Setting $\bw^n = \bx^n + \bm{d}$, the left-hand side of \eqref{eq-lem-1} can then be written as
    \begin{align*}
        \sup_{\bx \in \mR^{M_\mx}} \, L( \bm{\alpha}{}^\top \bx + \alpha_0 ) - \lambda \lVert\bx - \bx^n\rVert & = \sup_{\bw \in \mR^{M_\mx}} \, L( \bm{\alpha}{}^\top \bw) - \lambda \lVert{\bw-\bm{d}} - (\bw^n-\bm{d})\rVert\\
        & = \sup_{\bw \in \mR^{M_\mx}} \, L( \bm{\alpha}{}^\top \bw) - {\lambda} \lVert\bw-\bw^n\rVert 
    \end{align*}
    If $\lambda > 0$, then Lemma 47 of \cite{JMLR:v20:17-633} can be directly applied:
    \begin{align*}
        \sup_{\bw \in \mR^{M_\mx}} \, L( \bm{\alpha}{}^\top \bw) - {\lambda} \lVert\bw-\bw^n\rVert &= \begin{cases}
            L( \bm{\alpha}{}^\top \bw^n  ) & \text{if} \ \mathrm{lip}(L) \cdot \lVert \bm{\alpha} \rVert_{*} \leq {\lambda}, \\
            + \infty & \mbox{otherwise},
		\end{cases}\\
		&= \begin{cases}
			L( \bm{\alpha}{}^\top \bx^n + \alpha_0 ) & \text{if} \ \mathrm{lip}(L) \cdot \lVert \bm{\alpha} \rVert_{*} \leq {\lambda}, \\
            + \infty & \mbox{otherwise}.
		\end{cases}
    \end{align*}
    Note that Lemma~47 of \cite{JMLR:v20:17-633} assumes $\lambda > 0$. For the case where $\lambda = 0$, the left-hand side of \eqref{eq-lem-1} evaluates to $+ \infty$ since $L$ is assumed to be non-constant (\emph{cf.}~Section~\ref{sec-wasser}) and convex. The right-hand side of \eqref{eq-lem-1} also evaluates to $+ \infty$ since $\mathrm{lip}(L) \cdot \lVert \bm{\alpha} \rVert_{*} > \lambda$ due to $L$ being non-constant and $\bm{\alpha} \neq \bm{0}$. Hence, the equivalence also extends to the case where $\lambda = 0$.
    
    Now consider the case where $\bm{\alpha} = \bm{0}$. There is no $\bm{d}$ such that $\bm{\alpha}{}^\top \bm{d} = \alpha_0$ unless $\alpha_0 = 0$. However, when $\bm{\alpha} = \bm{0}$, the left-hand side of \eqref{eq-lem-1} has the trivial solution $\bx = \bx^n$, and the right-hand side of \eqref{eq-lem-1} simplifies to $L(\alpha_0)$ since $0 \leq {\lambda}$ always holds. Thus, the equivalence still holds, which concludes the proof.
\end{proof}

\begin{proof}[{\bf Proof of Proposition~\ref{prop-der-class-conv}}]
    The statement can be proven along the lines of the proof of Theorem~14~(ii) by \cite{JMLR:v20:17-633} if we leverage Lemma~\ref{lem-conv-class} to re-express the embedded maximization over $\bm{x} \in \mX$. Details are omitted for the sake of brevity.
\end{proof}

{\color{black}
\begin{proof}[{\bf Proof of Proposition~\ref{prop-bounded-class-conv}}]
    For ease of exposition, we adopt the notation introduced after problem~\eqref{eq-prop-class-conv}. By construction of problem~\eqref{eq-prop-class-conv}, $\lambda$ is lower bounded by $0$. To see that we can bound $\lambda$ from above as well, we observe that there are optimal solutions for which $\lambda$ does not exceed $\overline{\lambda} = \max \{ \overline{\lambda}_1, \, \overline{\lambda}_2 \}$, where
    \begin{equation*}
        \overline{\lambda}_1 \; = \;
        \sup_{\bbeta \in \mathcal{H}} \; \mathrm{lip}(L) \cdot \lVert \bbeta_{\mx} \rVert_*
    \end{equation*}
    with the bounded set $\mathcal{H} \subseteq \mR^{1 + M_{\bx} + M_{\bz}}$ containing all admissible hypotheses $\bbeta$, and
    \begin{myequation}\label{eq:lipschitz_lambda_2}
        \overline{\lambda}_2 \; = \;
        \sup_{\bbeta \in \mathcal{H}} \max_{n \in [N]} \; \max_{i \in \mathcal{I}} \; \max_{\bz \in \mC} \left\{
        \frac{l_{\bbeta}(\bx^n, \bz, iy^n)}{\kappa_\mz d_\mz(\bz,\bz^n) + \kappa_{\my} \cdot \indicate{i = -1}} \, : \,
        (\bz, i) \neq (\bz^n, 1)
        \right\}.
    \end{myequation}
    Indeed, selecting $\lambda \geq \overline{\lambda}_1$ ensures that all hypotheses $\bbeta \in \mathcal{H}$ are feasible, and selecting $\lambda \geq \overline{\lambda}_2$ implies that for any $\bbeta \in \mathcal{H}$, all left-hand sides in the first two constraint sets of problem~\eqref{eq-prop-class-conv} are non-positive, and thus all of these constraints are weakly dominated by the non-negativity constraints on $\bs$. Note that the numerator in the objective function of~\eqref{eq:lipschitz_lambda_2} is bounded since all suprema and maxima in~\eqref{eq:lipschitz_lambda_2} operate over bounded sets and $l_{\bbeta}$ is Lipschitz continuous. Moreover, the denominator in the objective function of~\eqref{eq:lipschitz_lambda_2} is bounded from below by a strictly positive quantity since $(\bz, i) \neq (\bz^n, 1)$ and $\kappa_{\mz}, \kappa_{\my} > 0$.
\end{proof}}

\begin{proof}[{\bf Proof of Corollary~\ref{coro-class-con}}]
    The proof is similar to those of Corollaries~16~and~17 by \cite{JMLR:v20:17-633}. Details are omitted for the sake of brevity.
\end{proof}

\begin{proof}[{\bf Proof of Proposition~\ref{prop-der-class-aff}}]
    The statement can be proven along the lines of the proof of Theorem~14~(i) by \cite{JMLR:v20:17-633}. We omit the details for the sake of brevity.
\end{proof}

{\color{black}
\begin{proof}[{\bf Proof of Proposition~\ref{prop-bounded-class-aff}}]
    For ease of exposition, we adopt the notation introduced after problem~\eqref{eq-prop-class-aff}. Our proof proceeds in two steps. We first show that without loss of generality, we can impose bounds on each variable $\bm{q}_{ni}$, $n \in [N]$ and $i \in \mathcal{I}$, and we afterwards show that we can impose non-restrictive bounds on $\lambda$ as well.
    
    To see that each $\bm{q}_{ni}$ can be bounded, $n \in [N]$ and $i \in \mathcal{I}$, we proceed in two steps. We first argue that $\bq_{ni}{}^\top ( \bm{d} - \bm{C} \bx^n ) \geq 0$ for all $n$ and $i$, that is, larger values of $\bq_{ni}$ weakly increase the left-hand sides in the first two constraint sets of~\eqref{eq-prop-class-aff}. Due to the non-negativity of $f_0$ in $\bs$ in problem~\eqref{eq-prop-class-aff}, larger values of $\bq_{ni}$ thus weakly increase the objective function in~\eqref{eq-prop-class-aff}. Non-zero values for $\bq_{ni}$ can therefore only be optimal if they allow to reduce $\lambda$ via the constraints $\lVert a_j y^n \cdot \bbeta_{\mx} - t \cdot \bm{C}{}^\top \bq_{ni}  \rVert_* \leq \lambda$. We then derive a bounded set $\mathcal{Q}$ such that $\lVert a_j y^n \cdot \bbeta_{\mx} - t \cdot \bm{C}{}^\top \bq_{ni} \rVert_* > \lVert a_j y^n \cdot \bbeta_{\mx} \rVert_*$ for all $n \in [N]$, $i = (j, t) \in \mathcal{I}$, all admissible $\bbeta$ and all $\bq_{ni} \notin \mathcal{Q}$, that is, the choice $\bq_{ni} = \bm{0} \in \mathcal{C}^*$ dominates any feasible choice of $\bq_{ni}$ outside of $\mathcal{Q}$. In view of the first step, note that $\bm{d} - \bm{C} \bx^n \in \mathcal{C}$ by construction of $\mX$ and the fact that $\bx^n \in \mX$. We thus have $\bq_{ni}{}^\top ( \bm{d} - \bm{C} \bx^n ) \geq 0$ since $\bq_{ni} \in \mathcal{C}^*$. As for the second step, consider for each $n$ and $i$ the orthogonal decomposition of the vectors $\bq_{ni} \in \mathcal{C}^*$ into $\bq_{ni} = \bq_{ni}^0 + \bq_{ni}^+$ where $\bq_{ni}^0 \in \text{Null} (\bm{C}^\top)$ is in the nullspace of $\bm{C}^\top$ and $\bq_{ni}^+ \in \text{Row} (\bm{C}^\top)$ is in the row space of $\bm{C}^\top$. There is a bounded set $\mathcal{Q}^+ \subseteq \text{Row} (\bm{C}^\top)$ such that $\lVert a_j y^n \cdot \bbeta_{\mx} - t \cdot \bm{C}{}^\top \bq_{ni}^+ \rVert_* > \lVert a_j y^n \cdot \bbeta_{\mx} \rVert_*$ for all $n \in [N]$, $i = (j, t) \in \mathcal{I}$, all admissible $\bm{\beta}$ and all $\bq_{ni}^+ \notin \mathcal{Q}^+$; note in particular that $\lVert a_j y^n \cdot \bbeta_{\mx} \rVert_*$ is bounded due to the assumed boundedness of the hypothesis set and the fact that $\mathcal{I}$ and $\mZ$ are finite sets. Similarly, there is a bounded set $\mathcal{Q}^0 \subseteq \text{Null} (\bm{C}^\top)$ such that for all $\bq_{ni}' \in \text{Null} (\bm{C}^\top) \setminus \mathcal{Q}^0$ satisfying $\bq_{ni}^+ + \bq_{ni}' \in \mathcal{C}^*$ for some $\bq_{ni}^+ \in \mathcal{Q}^+$ there is $\bm{q}_{ni}^0 \in \mathcal{Q}^0$ satisfying $\bq_{ni}^+ + \bq_{ni}^0 \in \mathcal{C}^*$ such that $(\bq_{ni}^+ + \bq_{ni}^0)^\top (\bm{d} - \bm{C} \bm{x}^n) \leq (\bq_{ni}^+ + \bq_{ni}')^\top (\bm{d} - \bm{C} \bm{x}^n)$, that is, $\bq_{ni}^0{}^\top \bm{d} \leq \bq_{ni}'{}^\top \bm{d}$, across all $n \in [N]$ and $i = (j, t) \in \mathcal{I}$. Thus, we can without loss of generality restrict the choice of $\bm{q}_{ni}$ to the bounded set $\mathcal{Q} = \mathcal{C}^* \cap (\mathcal{Q}^0 + \mathcal{Q}^+)$, where the sum is taken in the Minkowski sense.
    
    To see that $\lambda$ can be bounded as well, note that by construction, $\lambda$ is bounded from below by $0$. To see that we can bound $\lambda$ from above as well, we observe that there are optimal solutions to problem~\eqref{eq-prop-class-aff} for which $\lambda$ does not exceed $\overline{\lambda} = \max \{ \overline{\lambda}_1, \, \overline{\lambda}_2 \}$, where
    \begin{equation*}
        \overline{\lambda}_1 \; = \; \sup_{\bbeta \in \mathcal{H}} \; \max_{n \in [N]} \; \max_{i = (j, t) \in \mathcal{I}} \;
        \max_{\bq_{ni} \in \mathcal{Q}} \; \lVert a_j y^n \cdot \bbeta_{\mx} - t \cdot \bm{C}{}^\top \bq_{ni}  \rVert_*
    \end{equation*}
    with the bounded set $\mathcal{H} \subseteq \mR^{1 + M_{\bx} + M_{\bz}}$ containing all admissible hypotheses $\bbeta$, and
    \begin{equation*}
        \begin{array}{r@{}l}
            \displaystyle
            \overline{\lambda}_2 \; = \;
            \sup_{\bbeta \in \mathcal{H}} \; \max_{n \in [N]} \; \max_{i = (j, t) \in \mathcal{I}} \; \max_{\bz \in \mC} \; \max_{\bq_{ni} \in \mathcal{Q}} \Bigg\{
            \frac{\displaystyle \bq_{ni}{}^\top ( \bm{d} - \bm{C} \bx^n ) +  a_j t y^n \cdot ( \beta_0 + \bbeta_{\mx}{}^\top \bx^n + \bbeta_{\mz}{}^\top \bz ) + b_j}{\kappa_\mz d_\mz(\bz,\bz^n) + \kappa_\my \cdot \indicate{t = -1}} & \\
            \displaystyle
            : \, (\bz, t) \neq (\bz^n, 1) & \Bigg\}.
        \end{array}
    \end{equation*}
    Indeed, selecting $\lambda \geq \overline{\lambda}_1$ ensures that all hypotheses $\bbeta \in \mathcal{H}$ are feasible, and selecting $\lambda \geq \overline{\lambda}_2$ implies that for any $\bbeta \in \mathcal{H}$, all left-hand sides in the first two constraint sets of problem \eqref{eq-prop-class-aff} are non-positive, and thus all of these constraints are weakly dominated by the non-negativity constraints on $\bs$. Note that the numerator in the objective function of $\overline{\lambda}_2$ is bounded since all suprema and maxima operate over bounded sets. Moreover, the denominator in the objective function of $\overline{\lambda}_2$ is bounded from below by a strictly positive quantity since $(\bz, t) \neq (\bz^n, 1)$ and $\kappa_{\mz}, \kappa_{\my} > 0$. We thus conclude that variable $\lambda$ in problem~\eqref{eq-prop-class-aff} can be bounded as well.
\end{proof}}

\begin{proof}[{\bf Proof of Corollary~\ref{coro-class-aff}}]
    The proof is similar to that of Corollary~15 by \cite{JMLR:v20:17-633}. Details are omitted for the sake of brevity.
\end{proof}

The proof of Proposition~\ref{prop-der-reg-conv} relies on two lemmas that we will state and prove first.

\begin{lem}\label{lem-dual-norm}
    The compound norm $ \lVert [\bm{\alpha},\nu] \rVert_{\emph{comp}} = \lVert \bm{\alpha} \rVert + \kappa \lvert \nu \rvert$, $\bm{\alpha} \in \mR^{{M_\mx}}$, $\nu \in \mR$ and $\kappa > 0$, satisfies
    \begin{equation*}\label{eq-lem-dual-norm}
        \lVert [\bm{\alpha},\nu] \rVert_{\emph{comp}^*} = \mathrm{max} \left\{\lVert \bm{\alpha} \rVert_{*}, \; \dfrac{ \lvert \nu \rvert }{\kappa}\right\}.
    \end{equation*}
\end{lem}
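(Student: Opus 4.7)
My plan is to work directly from the definition of the dual norm and decompose the constraint into two pieces that can be optimized separately. By definition,
\[
\lVert [\bm{\alpha},\nu] \rVert_{\text{comp}^*} \;=\; \sup_{\bm{\alpha}' \in \mR^{M_\mx},\, \nu' \in \mR} \left\{ \bm{\alpha}^\top \bm{\alpha}' + \nu \nu' \;:\; \lVert \bm{\alpha}' \rVert + \kappa \lvert \nu' \rvert \leq 1 \right\}.
\]
The key observation is that the constraint couples $\bm{\alpha}'$ and $\nu'$ only through the scalar quantities $\lVert \bm{\alpha}' \rVert$ and $\lvert \nu' \rvert$, so I would introduce auxiliary variables $t = \lVert \bm{\alpha}' \rVert \geq 0$ and $s = \kappa \lvert \nu' \rvert \geq 0$ with $t + s \leq 1$.

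First I would carry out the inner optimization for a fixed budget split $(t,s)$. Maximizing $\bm{\alpha}^\top \bm{\alpha}'$ over $\bm{\alpha}'$ with $\lVert \bm{\alpha}' \rVert \leq t$ yields $t \lVert \bm{\alpha} \rVert_*$ by the definition of the dual norm (and homogeneity), while maximizing $\nu \nu'$ over $\lvert \nu' \rvert \leq s/\kappa$ yields $(s/\kappa) \lvert \nu \rvert$. Both suprema are attained, so the problem reduces to
\[
\lVert [\bm{\alpha},\nu] \rVert_{\text{comp}^*} \;=\; \sup_{t,s \geq 0,\; t+s \leq 1} \; t \lVert \bm{\alpha} \rVert_{*} + \frac{s}{\kappa} \lvert \nu \rvert.
\]

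Next I would solve this two-variable linear program. Since the objective is linear and the feasible region is a simplex in $\mR^2$, the optimum is attained at an extreme point: either $(t,s) = (1,0)$, giving the value $\lVert \bm{\alpha} \rVert_*$, or $(t,s) = (0,1)$, giving the value $\lvert \nu \rvert / \kappa$ (the vertex $(0,0)$ is clearly dominated). Taking the larger of the two yields the claimed formula $\max\{\lVert \bm{\alpha} \rVert_*,\, \lvert \nu \rvert/\kappa\}$.

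I do not anticipate a real obstacle here; the only subtlety is justifying the separable reduction, which relies on the positive homogeneity of $\lVert \cdot \rVert$ and the fact that one can independently scale $\bm{\alpha}'$ to any prescribed norm $t$ and choose the sign of $\nu'$ to align $\nu \nu'$ with $\lvert \nu \rvert \cdot (s/\kappa)$. These are standard facts, so the whole argument should fit in a few short lines.
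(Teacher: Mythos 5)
Your argument is correct, but it follows a genuinely different route from the paper. You work entirely on the primal side: you parametrize the feasible set of the dual-norm maximization by the budget split $t = \lVert \bm{\alpha}' \rVert$, $s = \kappa \lvert \nu' \rvert$, reduce the inner problems to $t \lVert \bm{\alpha} \rVert_*$ and $(s/\kappa)\lvert \nu \rvert$ via positive homogeneity and the generalized Cauchy--Schwarz inequality $\bm{\alpha}^\top \bm{\alpha}' \leq \lVert \bm{\alpha} \rVert_* \lVert \bm{\alpha}' \rVert$ (with attainment by compactness of the finite-dimensional unit ball), and then observe that the resulting linear program over the simplex is maximized at a vertex. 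The paper instead verifies Slater's condition for the defining maximization problem, passes to the Lagrange dual $g(\gamma)$, invokes its Lemma~\ref{lem-conv-class} with $L$ equal to the identity to evaluate the supremum over $\bm{x}$, and reads off the answer $\max\{\lVert \bm{\alpha} \rVert_*, \lvert \nu \rvert / \kappa\}$ as the optimal value of the dual problem $\min\{\gamma : \lVert \bm{\alpha} \rVert_* \leq \gamma, \, \lvert \nu \rvert \leq \gamma \kappa, \, \gamma \geq 0\}$. Your proof is more elementary and self-contained---it needs neither strong duality nor the auxiliary lemma---whereas the paper's version deliberately recycles Lemma~\ref{lem-conv-class}, which it has already established for the reformulation results, so both are legitimate; yours would arguably be the cleaner standalone proof.
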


\begin{proof}[{\bf Proof of Lemma~\ref{lem-dual-norm}}]
    By definition of the dual norm, we have that
    \begin{equation*}\label{proof-lem-dual-norm}
        \lVert [\bm{\alpha},\nu] \rVert_{\mathrm{comp}^*}  = \left\{ \begin{array}{l@{\quad}l}
            \displaystyle \mathop{\mathrm{maximize}}_{(\bx,y) \in \mR^{M_\mx} \times \mR} & \displaystyle \bm{\alpha}{}^\top \bx + \nu y  \\[4mm]
            \displaystyle \mathrm{subject\;to} & \displaystyle \lVert \bx \rVert + \kappa \lvert y \rvert \leq 1.
        \end{array} \right.
    \end{equation*}
    Note that the optimization problem on the right-hand side satisfies Slater's condition since the feasible region includes the interior point $(\bx,y) = (\bm{0},0)$. The optimization problem thus has a strong dual. To derive the dual problem, we consider the Lagrange dual function for $\gamma \geq 0$,
    \begin{equation*}\label{proof-lem-dual-norm-2}
        g(\gamma) =  \displaystyle \underset{(\bx,y) \in \mR^{M_\mx} \times \mR}{\sup}\, \bm{\alpha}{}^\top \bx + \nu y - \gamma (\lVert \bx \rVert + \kappa \lvert y \rvert - 1).
    \end{equation*}
    Note that the maximization is separable over $\bx$ and $y$. Focusing on the variable $y$, in order for the Lagrange dual function to attain a finite value, we need to have $\lvert \nu \rvert \leq \gamma \kappa$ so that $y^\star = 0$. Under this condition, the Lagrange dual function simplifies to
    \begin{equation*}\label{proof-lem-dual-norm-3}
        g(\gamma) =  \displaystyle \underset{\bx \in \mR^{M_\mx} }{\sup}\, \bm{\alpha}{}^\top \bx - \gamma \lVert \bx \rVert + \gamma.
    \end{equation*}
    We can now apply Lemma~\ref{lem-conv-class} with $L$ being the identity to obtain the equivalent reformulation
    \begin{equation*}
        g(\gamma) =  \displaystyle \begin{cases}
		  \gamma & \text{if} \  \lVert \bm{\alpha} \rVert_{*} \leq \gamma \ , \ \lvert \nu \rvert \leq \gamma \kappa, \\
		  + \infty & \mbox{otherwise}.
	   \end{cases}
    \end{equation*}
    The dual problem therefore is
    \begin{equation*}
        \begin{array}{l@{\quad}l}
            \displaystyle \mathop{\mathrm{minimize}}_{\gamma} & \displaystyle \gamma  \\[5mm]
            \displaystyle \mathrm{subject\;to} & \displaystyle \lvert \nu \rvert \leq \gamma \kappa\\
            & \lVert \bm{\alpha} \rVert_{*} \leq \gamma\\
            & \gamma \in  \mR{\color{black}_+},
        \end{array}
    \end{equation*}
    which has the optimal solution $\lVert [\bm{\alpha},\nu] \rVert_{\mathrm{comp}^*} = \gamma^\star = \max \{\lVert \bm{\alpha} \rVert_{*}, \, \lvert \nu \rvert / \kappa \}$.
\end{proof}

\begin{lem}\label{lem-conv-reg-alt}
    Assume that the loss function $L$ is convex and Lipschitz continuous. For fixed $\bm{\alpha},\bx^n \in \mR^{M_\mx}$, $\alpha_0, y^n \in \mR$, $\kappa >0$ and $\lambda \in \mR_+$, we have
    \begin{align*}\label{eq-lem-conv-reg-alt}
        &\underset{(\bx,y) \in \mR^{M_\mx} \times \mR}{\sup}\, L(\bm{\alpha}{}^\top \bx - y + \alpha_0) - \lambda \lVert \bx - \bx^n \rVert - \lambda \kappa \lvert y - y^n \rvert \\
        & \mspace{150mu} = 
        \begin{cases}
		    L(  \bm{\alpha}{}^\top \bx^n - y^n + \alpha_0 ) & \text{if } \mathrm{lip}(L) \cdot \mathrm{max} \left\{ \kappa \lVert \bm{\alpha} \rVert_{*} , 1 \right\} \leq \lambda \kappa ,\\
	        + \infty & \mbox{otherwise}.
	    \end{cases}
    \end{align*}
\end{lem}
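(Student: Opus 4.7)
The plan is to recast the joint supremum over $(\bx, y) \in \mR^{M_\mx}\times\mR$ as a single supremum over an extended vector variable in $\mR^{M_\mx+1}$, so that Lemma~\ref{lem-conv-class} can be applied directly, with Lemma~\ref{lem-dual-norm} used to translate the resulting dual-norm bound into the stated Lipschitz condition.

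First, I would introduce the extended variables $\tilde{\bx} = (\bx, y) \in \mR^{M_\mx+1}$ and $\tilde{\bx}^n = (\bx^n, y^n) \in \mR^{M_\mx+1}$, together with the extended coefficient vector $\tilde{\bm{\alpha}} = (\bm{\alpha}, -1) \in \mR^{M_\mx+1}$. With the compound norm $\lVert [\bm{v},\nu] \rVert_{\text{comp}} = \lVert \bm{v} \rVert + \kappa\lvert \nu\rvert$ from Lemma~\ref{lem-dual-norm}, the objective of the supremum rewrites as
\begin{equation*}
    L(\tilde{\bm{\alpha}}^\top \tilde{\bx} + \alpha_0) - \lambda \lVert \tilde{\bx} - \tilde{\bx}^n \rVert_{\text{comp}}.
\end{equation*}
This is exactly the form handled by Lemma~\ref{lem-conv-class}, applied over $\mR^{M_\mx+1}$ with the compound norm in place of $\lVert\cdot\rVert$. (The proof of Lemma~\ref{lem-conv-class} uses only the fact that $\lVert\cdot\rVert$ is a norm, so it goes through verbatim for $\lVert\cdot\rVert_{\text{comp}}$.)

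Applying Lemma~\ref{lem-conv-class} then yields that the supremum equals $L(\tilde{\bm{\alpha}}^\top \tilde{\bx}^n + \alpha_0) = L(\bm{\alpha}^\top \bx^n - y^n + \alpha_0)$ whenever $\mathrm{lip}(L)\cdot \lVert \tilde{\bm{\alpha}} \rVert_{\text{comp}^*} \leq \lambda$, and equals $+\infty$ otherwise. Invoking Lemma~\ref{lem-dual-norm} with $\nu = -1$ gives $\lVert \tilde{\bm{\alpha}} \rVert_{\text{comp}^*} = \max\{\lVert \bm{\alpha}\rVert_*, 1/\kappa\}$, so the feasibility condition becomes
\begin{equation*}
    \mathrm{lip}(L)\cdot \max\{\lVert \bm{\alpha}\rVert_*, 1/\kappa\} \leq \lambda
    \quad\Longleftrightarrow\quad
    \mathrm{lip}(L)\cdot \max\{\kappa\lVert \bm{\alpha}\rVert_*, 1\} \leq \lambda\kappa,
\end{equation*}
which is precisely the threshold asserted in the lemma.

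The potential obstacles are minor and essentially bookkeeping: one must check that Lemma~\ref{lem-conv-class}'s proof (which relies on a change of variables absorbing $\alpha_0$) is insensitive to which specific norm is used on the ambient space, and one must carefully handle the boundary case $\lambda = 0$ (where both sides diverge since $L$ is non-constant and $\tilde{\bm{\alpha}} \neq \bm{0}$). Both are already addressed in the proof of Lemma~\ref{lem-conv-class}, so no additional work is needed beyond the reduction described above.
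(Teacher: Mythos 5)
Your proposal is correct and follows essentially the same route as the paper's own proof: concatenate $(\bx,y)$ into a single vector, rewrite the objective using the compound norm $\lVert[\bm{v},\nu]\rVert_{\text{comp}} = \lVert\bm{v}\rVert + \kappa\lvert\nu\rvert$, apply Lemma~\ref{lem-conv-class} in dimension $M_\mx+1$, and translate the resulting dual-norm condition via Lemma~\ref{lem-dual-norm}. Your explicit remark that Lemma~\ref{lem-conv-class} is norm-agnostic (which the paper leaves implicit when it applies that lemma ``directly'' with the compound norm) is a welcome clarification rather than a deviation.
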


\begin{proof}[{\bf Proof of Lemma~\ref{lem-conv-reg-alt}}]
    Concatenating the variables $\bx$ and $y$ to $\bw = [\bx^\top,y]^\top \in \mR^{{M_\mx}+1}$, letting $\bm{\eta} = [\bm{\alpha}^\top,-1]^\top$ and $\bw^n = [(\bx^n)^\top,y^n]^\top$ and defining the compound norm $ \lVert [\bx,y] \rVert_{\mathrm{comp}} = \lVert \bx \rVert + \kappa \lvert y \rvert$, we can write the left-hand side of the equation in the statement of the lemma as
    \begin{equation*}
        \underset{\bw \in \mR^{{M_\mx}+1}}{\sup}\, L(\bm{\eta}{}^\top \bw + \alpha_0) - \lambda \lVert \bw - \bw^n \rVert_{\mathrm{comp}}.
    \end{equation*}
    Now we can apply Lemma~\ref{lem-conv-class} directly to conclude that
    \begin{equation*}
        \underset{\bw \in \mR^{{M_\mx}+1}}{\sup}\, L(\bm{\eta}{}^\top \bw + \alpha_0) - \lambda \lVert \bw- \bw^n \rVert_{\mathrm{comp}} = 
        \begin{cases}
		    L(  \bm{\eta}{}^\top \bw^n + \alpha_0 ) & \text{if} \ \mathrm{lip}(L) \cdot \lVert \bm{\eta} \rVert_{\mathrm{comp}^*} \leq \lambda,\\
		    + \infty & \mbox{otherwise}.
	    \end{cases}
    \end{equation*}
    The statement now follows from Lemma~\ref{lem-dual-norm}, which implies that $\mathrm{lip}(L) \cdot \lVert \bm{\eta} \rVert_{\mathrm{comp}^*} \leq \lambda$ if and only if $\mathrm{lip}(L) \cdot \mathrm{{max}} \{\lVert \bm{\alpha} \rVert_{*}, \, \lvert -1 \rvert / \kappa \} \leq \lambda$.
\end{proof}

\begin{proof}[{\bf Proof of Proposition~\ref{prop-der-reg-conv}}]
    The statement can be proven along the lines of the proof of Theorem~4~(ii) by \cite{JMLR:v20:17-633} if we leverage Lemma~\ref{lem-conv-reg-alt} to re-express the embedded maximization over $(\bx,y) \in \mR^{M_\mx} \times \mR$. Details are omitted for the sake of brevity.
\end{proof}

{\color{black}
\begin{proof}[{\bf Proof of Corollary~\ref{coro-bounded-reg-conv}}]
    The proof is similar to that of Proposition~\ref{prop-bounded-class-conv}. Details are omitted for the sake of brevity.
\end{proof}}

\begin{proof}[{\bf Proof of Corollary~\ref{coro-reg-con}}]
    The proof is similar to that of Corollary~5 by \cite{JMLR:v20:17-633}. Details are omitted for the sake of brevity.
\end{proof}

\begin{proof}[{\bf Proof of Proposition~\ref{prop-der-reg-aff}}]
    The statement can be proven along the lines of the proof of Theorem~4~(i) by \cite{JMLR:v20:17-633}. We omit the details for the sake of brevity.
\end{proof}

{\color{black}
\begin{proof}[{\bf Proof of Corollary~\ref{coro-bounded-reg-aff}}]
    The proof is similar to that of Proposition~\ref{prop-bounded-class-aff}. Details are omitted for the sake of brevity.
\end{proof}}

\begin{proof}[{\bf Proof of Corollary~\ref{coro-reg-aff}}]
    The proof is similar to those of Corollaries~6~and~7 by \cite{JMLR:v20:17-633}. Details are omitted for the sake of brevity.
\end{proof}

\end{document}